\newcommand{\bb} {\mathbb}
\newcommand{\cal}{\mathcal}
\newcommand{\frk}{\mathfrak}
\newcommand{\bbA}{\bb{A}}
\newcommand{\bbC}{\bb{C}}
\newcommand{\bbF}{\bb{F}}
\newcommand{\bbK}{\bb{K}}
\newcommand{\bbL}{\bb{L}}
\newcommand{\bbP}{\bb{P}}
\newcommand{\bbR}{\bb{R}}
\newcommand{\bbQ}{\bb{Q}}
\newcommand{\bbZ}{\bb{Z}}
\newcommand{\fb}{\frk{b}}
\newcommand{\fh}{\frk{h}}
\newcommand{\fk}{\frk{k}}
\newcommand{\fm}{\frk{m}}
\newcommand{\frg}{\frk{g}}
\newcommand{\fgl}{\frk{gl}}
\newcommand{\fsl}{\frk{sl}}
\newcommand{\bx}{\mathord{\mathbf{x}}}
\newcommand{\by}{\mathord{\mathbf{y}}}
\newcommand{\catA}{\mathord{\mathsf{A}}}
\newcommand{\catR}{\mathord{\mathsf{R}}}
\newcommand{\catS}{\mathord{\mathsf{S}}}
\newcommand{\grpS}{\mathord{\frk{S}}}
\newcommand{\shE}{\mathord{\cal{E}}}
\newcommand{\shF}{\mathord{\cal{F}}}
\newcommand{\shL}{\mathord{\cal{L}}}
\newcommand{\shO}{\mathord{\cal{O}}}
\newcommand{\shP}{\mathord{\cal{P}}}
\newcommand{\shT}{\mathord{\cal{T}}}
\newcommand{\stkM}{\mathord{\frk{M}}}
\newcommand{\stkS}{\mathord{\frk{S}}}
\newcommand{\stkT}{\mathord{\frk{T}}}
\newcommand{\stkU}{\mathord{\frk{U}}}
\newcommand{\stkV}{\mathord{\frk{V}}}
\newcommand{\stkX}{\mathord{\frk{X}}}
\newcommand{\stkY}{\mathord{\frk{Y}}}
\newcommand{\stkZ}{\mathord{\frk{Z}}}
\newcommand{\lf} {\mathrm{lf}}
\newcommand{\tw} {\mathrm{tw}}
\newcommand{\coh}{\mathrm{coh}}
\newcommand{\ext}{\mathrm{ext}}
\newcommand{\mot}{\mathrm{mot}}
\newcommand{\qpr}{\mathrm{qpr}}
\newcommand{\red}{\mathrm{red}}
\newcommand{\ses}{\mathrm{ss}}
\newcommand{\tor}{\mathrm{tor}}
\newcommand{\perf}{\mathrm{perf}}
\newcommand{\rk} {\operatorname{rk}}
\newcommand{\chr}{\operatorname{char}}
\newcommand{\sgn}{\operatorname{sgn}}
\newcommand{\FM}{\operatorname{FM}}
\newcommand{\GL}{\operatorname{GL}}
\newcommand{\SL}{\operatorname{SL}}
\newcommand{\Gr}{\operatorname{Gr}}
\newcommand{\Id}{\operatorname{Id}}
\newcommand{\Aut}{\operatorname{Aut}}
\newcommand{\End}{\operatorname{End}}
\newcommand{\Ext}{\operatorname{Ext}}
\newcommand{\Hom}{\operatorname{Hom}}
\newcommand{\Iso}{\operatorname{Iso}}
\newcommand{\Num}{\operatorname{Num}}
\newcommand{\Pic}{\operatorname{Pic}}
\newcommand{\Sym}{\operatorname{Sym}}
\newcommand{\Cone}{\operatorname{Cone}}
\newcommand{\Spec}{\operatorname{Spec}}
\newcommand{\Lb}{\mathop{L\frk{b}_+}}
\newcommand{\Lsl}{\mathop{L\frk{sl}_2}}
\newcommand{\wot}{\mathop{\widehat{\otimes}}}
\newcommand{\Coh} {\mathop{\mathsf{Coh}}\nolimits}
\newcommand{\Tor} {\mathop{\mathsf{Tor}}\nolimits}
\newcommand{\Torf}{\mathop{\mathsf{Tor^{\perf}}}\nolimits}
\newcommand{\DCoh}{\mathop{\mathsf{D^b_{\coh}}}}
\newcommand{\RHom}{\mathop{\mathbf{R}\mathrm{Hom}}}
\newcommand{\ol}{\overline}
\newcommand{\wt}{\widetilde}
\newcommand{\wh}{\widehat}
\newcommand{\ep}{\epsilon}
\newcommand{\algU}{\mathord{\ddot{\mathrm{U}}}}
\newcommand{\Ca} {\mathord{C}}
\newcommand{\Csm}{\mathord{C_{\mathrm{sm}}}}
\newcommand{\Ea} {\mathord{E}}
\newcommand{\Esm}{\mathord{E_{\mathrm{sm}}}}
\newcommand{\U}[1]{\mathord{\mathrm{U}(#1)}}
\newcommand{\Ue}[1]{\mathord{\mathrm{U}_{\ext}(#1)}}
\newcommand{\DU}[1]{\mathord{\mathrm{DU}(#1)}}
\newcommand{\tDU}[1]{\mathord{\widetilde{\mathrm{DU}}(#1)}}
\newcommand{\Ht}[1]{\mathord{\mathrm{H}_{\tw}(#1)}}
\newcommand{\He}[1]{\mathord{\mathrm{H}_{\ext}(#1)}}
\newcommand{\Hall}[1]{\mathord{\mathrm{H}(#1)}}
\newcommand{\Dr}[1]{\mathord{\mathrm{D}_{\red}#1}}
\newcommand{\St}[1]{\mathord{\mathsf{St}/#1}}
\newcommand{\Var}[1]{\mathord{\mathsf{Var}/#1}}
\newcommand{\longto}[1]{\xrightarrow{\ #1 \ }}
\newcommand{\simto}{\xrightarrow{\sim}}
\newcommand{\longsimto}{\longto{\sim}}
\theoremstyle{plain}
 \newtheorem{thm}{Theorem}[section]
 \newtheorem{lem}[thm]{Lemma}
 \newtheorem{prop}[thm]{Proposition}
 \newtheorem{cor}[thm]{Corollary}
 \newtheorem*{thm*}{Theorem}
\theoremstyle{definition}
 \newtheorem{dfn}[thm]{Definition}
 \newtheorem{fct}[thm]{Fact}
\theoremstyle{remark}
 \newtheorem{rmk}[thm]{Remark}
\numberwithin{equation}{section}
\begin{document}


\title[Quantum toroidal algebras and motivic Hall algebras I]{Quantum toroidal algebras and motivic Hall algebras I.\ Hall algebras for singular elliptic curves}
\author{Shintarou Yanagida}
\address{Research Institute for Mathematical Sciences,
Kyoto University, Kyoto 606-8502, Japan}
\email{yanagida@kurims.kyoto-u.ac.jp}

\date{April 23, 2014}


\begin{abstract}
We consider the motivic Hall algebra of coherent sheaves 
over an irreducible reduced projective curve of arithmetic genus $1$.
We introduce the composition subalgebra in the singular curve case,
and show that it is isomorphic to the composition subalgebra for a smooth elliptic curve.
As in the case of smooth elliptic case studied by Burban and Schiffmann,
the reduced Drinfeld double of the composition subalgebra 
is isomorphic to the quantum toroidal algebra for $\mathfrak{gl}_1$
(also called Ding-Iohara-Miki algebra), 
and it inherits automorphisms induced from equivalences of 
the associated derived category.
We show that one of the non-trivial automorphisms coincide with the one 
constructed by Miki in a purely algebraic manner.
\end{abstract}

\maketitle

\section{Introduction}

This paper is the first part of the study 
on the relationship between 
the quantum toroidal algebras and the motivic Hall algebras 
of projective curves of arithmetic genus one.
In this paper we prepare some generalities on motivic Hall algebras 
and restate known result on the Ringel-Hall algebras for curves.
The new results given in this paper is relatively few.

This paper arises from the investigation of 
what should be called the quantum toroidal algebra for $\fgl_1$.
We will denote it by $\algU$.
It has two parameters $q_1$ and $q_2$.
This name comes from the quantum toroidal algebra for $\fgl_n$ ($n \ge 2$) 
introduced in the work \cite{GKV:1995} of Ginzburg, Kapranov 
and Vasserot in the middle 1990s.

The algebra $\algU$ has several other names.
As far as we know, it was first introduced in 
the work \cite{BS:2012} of Burban and Schiffmann
with the name (reduced Drinfeld double of) elliptic Hall algebra.
Miki \cite{Mi:2007} called $(q, \gamma)$-analog of $W_{1+\infty}$,
In the paper \cite{FHHSY}, 
B.~Feigin, Hashizume, Hoshino, Shiraishi and the author
called it the Ding-Iohara algebra.
In the paper \cite{FT:2011}, B.~Feigin and Tsymbaliuk
also used the same name.
The papers \cite{FFJMM1,FFJMM2} of 
B.~Feigin, E.~Feigin, Jimbo, Miwa and Mukhin
called quantum continuous $\fgl_\infty$.

The structure of $\algU$ looks complicated when it is considered as
an analogue of quantum affine algebra.
However, following the approach of \cite{BS:2012},
we have rather clear understanding of this algebra 
when we consider it as the reduced Drinfeld double of 
the Ringel-Hall algebra for a smooth elliptic curve $\Esm$ defined over 
a finite field $\bbF_q$. 
It has a natural $\bbZ^2$-grading, 
where $\bbZ^2$ appears as $\Num(\Coh(\Esm))$, 
the numerical Grothendieck group of the category $\Coh(\Esm)$ of coherent sheaves.

Burban and Schiffmann also constructed an action of $\Aut(\DCoh(\Esm))$ on 
the algebra $\algU$.
They identified an action by a certain Fourier-Mukai transform with the 
with Miki's automorphism $\theta$.

Now we want to consider the Ringel-Hall algebra of coherent sheaves 
over a singular elliptic curve $\Ea$. 
The first main theorem is that 
the composition subalgebra is isomorphic to that for a smooth elliptic curve.

\begin{thm*}[{Theorem \ref{thm:Ea:qt}}]
Denote by $\U{\Ea}$ the composition subalgebra 
of the Ringel-Hall algebra for $\Ea$.
Denote by $\Dr{\U{\Ea}}$ the reduced Drinfeld double of the bialgebra $\U{\Ea}$ 
Then $\Dr{\U{\Ea}}$ is isomorphic to $\algU$,
where the parameter $q_1,q_2$ in $\algU$ 
appears as the inverse of the zeros of the motivic zeta function of $\Ea$.
Namely we put 
$$
 \zeta_{\mot}(\Ea;z) = \dfrac{(1-q_1 z)(1-q_2 z)}{(1-z)(1-q z)}.
$$
\end{thm*}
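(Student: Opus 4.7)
The plan is to leverage the smooth-case theorem of Burban--Schiffmann by constructing an explicit bialgebra isomorphism $\U{\Ea} \cong \U{\Esm}$ for a smooth elliptic curve $\Esm$ whose motivic zeta function has the same numerator factors $(1-q_1 z)(1-q_2 z)$ as $\Ea$. Once this is in hand, the reduced Drinfeld double functor transports the isomorphism to $\Dr{\U{\Ea}} \cong \Dr{\U{\Esm}}$, and Burban--Schiffmann identify the right-hand side with $\algU$.

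I would first fix a natural generating set for $\U{\Ea}$. Since $\Ea$ is irreducible, reduced, and of arithmetic genus $1$, the dualising sheaf satisfies $\omega_{\Ea} \cong \shO_{\Ea}$, so the Euler form on $K_0(\Coh(\Ea))$ is skew-symmetric of rank--degree type, exactly as in the smooth case. Accordingly, I would take classes of structure sheaves of smooth closed points of $\Ea$ together with classes of line bundles of fixed degrees as generators, in parallel with the Drinfeld-type presentation of $\U{\Esm}$. Relations among these generators can then be computed by working out Hall products, which are controlled by motivic counts of extensions. The crucial point is that such counts, when the input sheaves are supported on the smooth locus or are locally free, depend only on the numerical and zeta-function data of $\Ea$; in particular, the parameters $q_1, q_2$ read off from $\zeta_{\mot}(\Ea; z)$ enter exactly where the Frobenius eigenvalues enter in the smooth elliptic case.

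Once the isomorphism of algebras is in place, I would verify that Green's coproduct on $\U{\Ea}$ matches the coproduct on $\U{\Esm}$ under the generator correspondence. Since the coproduct is defined through motivic classes of the extension stack restricted to the composition subalgebra, its values on the generators are again dictated by the same numerical recipe as in the smooth case, yielding a bialgebra isomorphism. The reduced Drinfeld double is functorial in bialgebra isomorphisms, so one concludes $\Dr{\U{\Ea}} \cong \Dr{\U{\Esm}} \cong \algU$.

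The main obstacle I anticipate lies in handling sheaves whose support meets the singular locus of $\Ea$ --- most notably, torsion sheaves at the singular point(s) and non-locally-free torsion-free rank-one sheaves. Either these already lie in the composition subalgebra (and then one must express them in terms of the chosen smooth generators via universal exact sequences, so that they contribute no new relations), or the definition of the composition subalgebra excludes them outright; settling this is essential, as unchecked contributions from singular-support sheaves would break the comparison with the smooth case. I expect the resolution to proceed by exhibiting, for each problematic sheaf, a short exact sequence in $\Coh(\Ea)$ involving only smooth-point sheaves and line bundles, so that its Hall class is an iterated Hall bracket of the chosen generators and its presence adds nothing beyond what the zeta-function data already encodes.
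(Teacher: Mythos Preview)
Your overall strategy---reduce to the smooth case by showing that the relations in $\U{\Ea}$ are governed solely by the zeta function---is the same as the paper's. But there are two substantive gaps.

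First, and most importantly, you never explain why matching the relations you can compute (Hall products of line bundles and smooth-point torsion sheaves) gives an \emph{isomorphism} rather than merely a surjection from the algebra with those relations onto $\U{\Ea}$. Knowing that certain relations hold is only half the work; you must also show there are no further relations. The paper closes this gap via the $\SL(2,\bbZ)$-action on $\Dr{\U{\Ea}}$, which comes from the Fourier--Mukai transforms of Burban--Kreu\ss ler on the singular curve. These transforms give equivalences $\catS_\nu \simeq \Tor(\Ea)$ for every slope $\nu$, exactly parallel to Atiyah's theorem in the smooth case, and this is what lets one transport the complete Burban--Schiffmann presentation (Fact~\ref{fct:Esm:double}) to $\Ea$. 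Your proposal never invokes this $\SL(2,\bbZ)$-symmetry, and without it the completeness of your relation set is unaddressed.

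Second, your anticipated obstacle about sheaves at the singular point is dissolved not by writing them as iterated extensions, but by the definition of the composition subalgebra itself: the paper works with \emph{perfect} torsion sheaves (Definition~\ref{dfn:comp-sa}), and $\Torf(\Ea)$ decomposes over the regular points only. The singular-point skyscraper is simply excluded from the outset. Relatedly, your plan to compare with a specific smooth $\Esm$ having the same numerator in $\zeta_{\mot}$ is awkward in the motivic setting---there is no reason such a curve exists with the prescribed class in $K(\Var{\fk})$. The paper avoids this by comparing directly with the abstract presentation of $\algU$ in terms of the parameters $q_1,q_2$, not with a particular $\Esm$.
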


As in the case for a smooth elliptic curve studied by Burban and Schiffmann,
the reduced Drinfeld double of the composition subalgebra,
isomorphic to the so-called Ding-Iohara-Miki algebra, 
inherits automorphisms induced from equivalences of 
the associated derived category.
We will show that one of the non-trivial automorphisms coincides with the one 
constructed by Miki for Ding-Iohara-Miki algebra.

\begin{thm*}[{Theorem \ref{thm:Ea:theta}}]
There is a Fourier-Mukai transform 
inducing the algebra automorphism 
$\Phi^H = \theta$ on $\Dr{\U{\Ea}} = \algU$.
\end{thm*}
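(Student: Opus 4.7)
The plan is to parallel the construction of Burban--Schiffmann for the smooth elliptic case. First, I would produce a Fourier--Mukai autoequivalence $\Phi$ of $\DCoh(\Ea)$. For any integral projective curve of arithmetic genus one, the compactified Jacobian is isomorphic to $\Ea$ itself and carries a normalized Poincar\'e sheaf $\shP$ on $\Ea \times \Ea$; the corresponding Fourier--Mukai transform is an autoequivalence of $\DCoh(\Ea)$ by the work of Burban--Kreussler (following Arinkin). On the numerical Grothendieck group $\Num(\Coh(\Ea)) \simeq \bbZ^2$ it acts by the $S$-matrix exchanging rank and degree with a sign.

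Next, I would descend $\Phi$ to an automorphism $\Phi^H$ of the motivic Hall algebra $\Hall{\Ea}$. Since $\Phi$ takes distinguished triangles to distinguished triangles, it intertwines the convolution product, and in the motivic setting this upgrades to an algebra automorphism. The main verification is that $\Phi^H$ preserves the composition subalgebra $\U{\Ea}$: this reduces to checking that the images of the distinguished generators (structure sheaves of smooth closed points and powers of an ample line bundle on $\Ea$) again lie in $\U{\Ea}$, which follows from the explicit action of $\Phi$ exchanging torsion sheaves supported on the smooth locus with certain semistable vector bundles.

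Third, I would extend $\Phi^H$ to the reduced Drinfeld double $\Dr{\U{\Ea}}$. Fourier--Mukai autoequivalences swap the positive and negative halves of the double in a manner compatible with the Hopf pairing, so this extension is automatic and yields a well-defined automorphism of $\Dr{\U{\Ea}} = \algU$.

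Finally, I would identify $\Phi^H$ with Miki's $\theta$. Under the isomorphism $\Dr{\U{\Ea}} \simeq \algU$ of Theorem \ref{thm:Ea:qt}, the automorphism $\theta$ is characterized by its action on the $\bbZ^2$-grading (the $S$-matrix) together with its value on a chosen pair of generators. The grading condition holds by step one, and for the generators I would use the isomorphism $\U{\Ea} \simeq \U{\Esm}$ from Theorem \ref{thm:Ea:qt} to transport the identification $\Phi^H = \theta$ known in the smooth case from Burban--Schiffmann. The main obstacle will be confirming that $\Phi$ on the singular curve is intertwined, through the isomorphism of Theorem \ref{thm:Ea:qt}, with the corresponding Fourier--Mukai transform on $\Esm$ used by Burban--Schiffmann; making this intertwining precise at the level of generators of the reduced Drinfeld double is the key technical point.
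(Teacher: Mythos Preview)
Your proposal is essentially the paper's approach, but considerably more detailed than what the paper actually provides. The paper's ``proof'' consists of a single sentence: ``By the comparison of the Fourier--Mukai transforms, we also have [the theorem].'' The underlying logic is that \S\ref{subsec:Ea:FM} establishes (via Burban--Kreu\ss ler \cite{BK:2005}) that spherical twists $T_{\shO_C}$ and $T_{\shO_{x_0}}$ generate an $\SL(2,\bbZ)$ inside $\FM(\Ea)$, and Theorem~\ref{thm:Ea:qt} shows $\Dr{\U{\Ea}}$ has the \emph{same abstract presentation} as $\Dr{\U{\Esm}}$; hence the $\widehat{\SL}(2,\bbZ)$-action written down in \S\ref{subsec:auto-FM} carries over verbatim, and the $S$-matrix element is $\theta$.

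Two small remarks on your write-up. First, in step~2 you propose to descend $\Phi$ to an automorphism of $\Hall{\Ea}$ itself; this does not literally work, since a derived autoequivalence need not preserve the heart $\Coh(\Ea)$. As you correctly note in step~3, what one actually obtains is an automorphism of the reduced Drinfeld double (cf.\ the remark citing Cramer \cite{C:2010} in \S\ref{subsec:auto-FM}), so steps~2 and~3 should be merged. Second, the paper sidesteps your ``key technical point'' (intertwining the two Fourier--Mukai transforms through the isomorphism of Theorem~\ref{thm:Ea:qt}) by working purely at the level of the abstract presentation in Fact~\ref{fct:Esm:double}: once both doubles are identified with the same $\SL(2,\bbZ)$-equivariant algebra, no direct geometric comparison between $\Ea$ and $\Esm$ is needed.
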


In the course of the study, 
we find that the motivic formalism of Hall algebras is convenient 
to circumvent technical issues on Fourier-Mukai transforms.
In this paper we deal with generalities on the motivic Hall algebra 
for projective curves, 
and introduce the motivic analogue of the notions around the Ringel-Hall algebras
(including the motivic version of the twisted and the extended Ringel-Hall algebras).
The theorems cited above will be stated in the motivic language.

Let us briefly spell out the organization of this paper.

In \S\ref{sect:toroidal} we give the definition of 
the $\fgl_1$-quantum toroidal algebra $\algU$.

\S\ref{sect:K} is the preparation of the Grothendieck group 
of varieties and stacks.
It will be used in the next \S\ref{sect:mh} for the introduction 
of the motivic Hall algebras.
We also introduce Kapranov's motivic zeta function,
for it will be identified with the structure constant (function)
of the quantum toroidal algebra.

In the section \S\ref{sect:mh} we also introduce motivic analog 
of the notions related to the ordinary Ringel-Hall algebras.

The sections \S\S\ref{sect:g=0} -- \ref{sect:g=1:sm} 
are restatement of the known results on the Ringel-Hall algebras 
for a smooth curve defined on a finite field.
We dare to include these facts in this paper for there is little literature 
treating concrete examples of motivic Hall algebras.

In the final section we will give the main results of the paper.
This part is relatively short since the proofs will be almost done 
in the previous sections.

\subsection*{Acknowledgements}

The author is supported by the Grant-in-aid for 
Scientific Research (No.\ 25800014), JSPS.
This work is also partially supported by the 
JSPS for Advancing Strategic International Networks to 
Accelerate the Circulation of Talented Researchers
``Mathematical Science of Symmetry, Topology and Moduli, 
  Evolution of International Research Network based on OCAMI"''.

A part of this paper was written during the author's stay at UC Davis. 
The author would like to thank the institute for support and hospitality.

The author gave talks on this paper 
at RIMS, Kyoto University (June 2014), Kobe University (September 2014),
Nagoya University (October 2014),
the camp-style seminar of RIMS project 2014 (February 2015)
and MSJ meeting at Meiji University (March 2015).
He would like to thank organizers of these seminars and conferences.

Finally the author would also like to thank 
Professors Boris Feigin, Yoshiyuki Kimura, Ryosuke Kodera and 
Motohico Mulase for discussion and comments.

\subsection*{Notation}
The letter $\fk$ denotes a field which will be a fixed one in each context.

We denote categories by sans-serif letters like $\catA$ and $\catS$.
For example, $\Coh(X)$ denotes the category of coherent sheaves on a scheme $X$.

We denote stacks by Fraktur letters like $\stkM$ and $\stkX$.
For example, $\stkM=\stkM(X)$ will denote the moduli stack of 
coherent sheaves on a smooth projective variety $X$. 

A coherent sheaf will be denoted by a calligraphy letter like $\shE$. 

For an abelian category $\catA$, 
the associated Grothendieck group will be denoted by $K(\catA)$.
The class of an object $a \in \catA$ in the Grothendieck group 
will be denoted by $\ol{a} \in K(\catA)$.


\section{Quantum toroidal algebra for $\fgl_1$}
\label{sect:toroidal}

We follow 
This definition is given in 
the paper \cite{FJMM:2013} of B. Feigin, Jimbo, Miwa and Mukhin.

\subsection{Definition}

Let $d$ and $q$ be complex numbers such that 
$$
 q_1 := d q^{-1},\quad 
 q_2 := q^2,\quad
 q_3 := d^{-1} q^{-1}
$$
satisfies 
$$
 q_1^{n_1} q_2^{n_2} q_3^{n_3} = 1 \text{ for } n_1,n_2,n_3 \in \bb{Z}
 \iff
n_1 = n_2 = n_3.
$$


\begin{dfn}
The quantum toroidal algebra for $\fgl_1$,
denoted by $\algU$, 
is an associative $\bbC$-algebra 
generated by
$$
 E_{k},\ F_{k},\ H_{r},\ K^{\pm1},\ q^{\pm c} \quad
 (k \in \bb{Z},\ r \in \bb{Z}/\{0\}).
$$
with the following defining relation. 
\begin{align*}
&K K^{-1} = K^{-1} K = 1,\quad
 q^{\pm c} \text{ are central}, \quad
 q^{c}q^{-c} = q^{-c} q^{c} = 1 ,
\\
&K^{\pm}(z)K^{\pm}(w) 
=K^{\pm}(w)K^{\pm}(z) ,
\\
&\dfrac{g(q^{-c}z,w)}{g(q^{c}z,w)}
 K^{-}(z)K^{+}(w) 
=\dfrac{g(w, q^{-c}z)}{g(w, q^{c}z)}
 K^{+}(w)K^{-}(z) ,
\\
&g(z,w)K^{\pm}(q^{(1 \mp 1)c/2}z) E(w) 
+g(w, z)E(w)K^{\pm}(q^{(1\mp1)c/2}z)=0 ,
\\
&g(w,z)K^{\pm}(q^{(1 \pm 1)c/2}z) F(w) 
+g(z,w) F(w)K^{\pm}(q^{(1\pm1)c/2}z)=0 ,
\\
&[E(z), F(w)] 
=\dfrac{1}{q - q^{-1}} 
 (\delta(q^{c}w/z)K^{+}(z) 
 -\delta(q^{c}z/w)K^{-}(w)) ,
\\
&g(z,w)E(z)E(w) + g(w,z)E(w)E(z) = 0 ,
\\
&g(w, z)F(z)F(w) + g(z,w)F(w)F(z) = 0
\\
&\Sym_{z_1,z_2}[X(z_1),[X(z_2),X(w)]_q]_{q^{-1}}=0 
 \quad (X=E \text{ or } F).
\end{align*}
Here we used the current expression 
\begin{align*}
&E(z) := \sum_{k \in \bb{Z}}E_{k} z^{-k},\quad
 F(z) := \sum_{k \in \bb{Z}}F_{k} z^{-k},\\
&K^{\pm}(z) := K^{\pm1} 
 \exp\Bigl(\pm(q-q^{-1})\sum_{r=1}^{\infty}H_{\pm r} z^{\mp r}\Bigr).
\end{align*}
We also used the functions $g(z,w)$ given by 
\begin{align*}
 g(z,w) := (z-q_1 w)(z-q_2 w)(z-q_3 w).
\end{align*}
Finally, in the last line 
we used the symbol $\Sym_{z,w}$
for the symmetrizer with respect to $z,w$,
and $[X,Y]_q:=X Y - q Y X$.
\end{dfn}

$\algU$ can also be considered as an associative algebra 
defined over $\bbQ(q_1,q_2)$ with $q_3 := q_1^{-1}q_2^{-1}$.
In \S\ref{sect:g=1:sm}, we will treat $\algU$ in this way.

The algebra $\algU$ is $\bbZ^2$-graded by the degree assignment
\begin{align*}
&\deg(E_{k}) = ( 1, k),\quad 
 \deg(F_{k}) = (-1, k),\quad 
 \deg(H_{r}) = ( 0, r),\\
&\deg(K) = \deg(K^{-1}) = \deg(q^{\pm c}) = (0, 0),
\end{align*}

\begin{figure}[htbp]
{\unitlength 0.1in%
}%
\caption{Grading on $\algU$}
\end{figure}

The algebra $\algU$ has also a formal coproduct
\begin{align*}
&\Delta E(z) = E(z) \otimes 1 + K^{-}(C_1z) \otimes  E(C_1z),
 \quad
 \Delta F(z) = F(C_2z) \otimes K^{+}(C_2z) + 1 \otimes F(z),
\\
&\Delta K^{+}(z) = K^{+}(z) \otimes K^{+}(C^{-1}_1 z),
 \quad
 \Delta K^{-}(z) = K^{-}(C^{-1}_2 z) \otimes  K^{-}(z),
 \quad
 \Delta q^c = q^c \otimes q^c
\end{align*}
with $C_1 := q^c \otimes 1$ and $C_2 := 1 \otimes q^c$.
This coproduct gives $\algU$ the structure of formal bialgebra.

\subsection{Automorphism}

In \cite[Theorem 2.1]{Mi:2007}, 
Miki constructed an (algebra) automorphism $\theta$ 
of $\algU$ such that
\begin{align*}
&E_{0} \mapsto - q^c H_{-1}, \quad
 F_{0} \mapsto a q^c H_{1}, \\
&H_{1} \mapsto    E_{0}, \quad
 H_{-1}\mapsto -a F_{0}, \\
&q^c \mapsto K,\quad
 K \mapsto q^{-c}
\end{align*}
for some coefficient $a$.
The automorphism $\theta$ exchanges the Heisenberg subalgebras 
$\frk{a}_h$ and $\frk{a}_v$ 
living on the axes in the $\bbZ^2$ (expressing the grading of the algebra).


\begin{figure}[htbp]
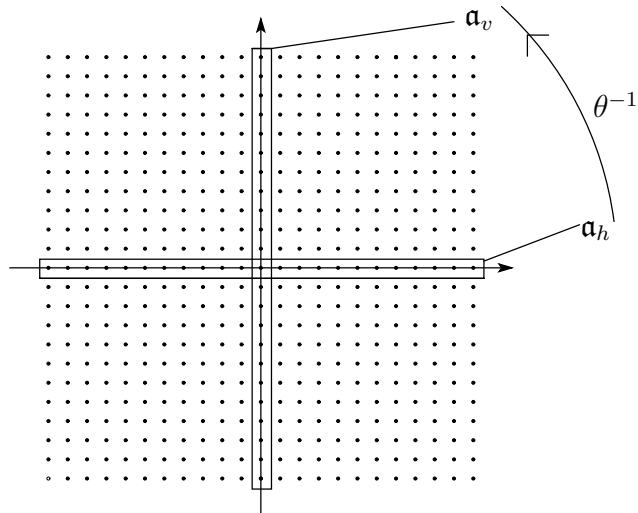

{\unitlength 0.1in%
}%
\caption{The automorphism $\theta$}
\end{figure}

\section{The Grothendieck ring of algebraic varieties and stacks}
\label{sect:K}

This section is devoted to the introduction of the Grothendieck rings 
of varieties and stacks,
which will be used as the coefficient ring of the motivic Hall algebra.

\subsection{Definitions}

Let $\fk$ be an arbitrary field.
By a ``variety over $\fk$", 
we mean a reduced and separated scheme of finite type over $\fk$.
Denote by $\Var{\fk}$ the category of varieties over $\fk$.

\begin{dfn}
The \emph{Grothendieck group $K(\Var{\fk})$ of varieties over $\fk$} 
is the quotient of the free abelian group on isomorphism classes of varieties over $\fk$,
by the relations of the form
$$
 [X] = [Y] + [X \setminus Y],
$$
where $Y$ is a closed subvariety of the variety $X$ 
and $X \setminus Y$ is the complementary open subvariety.
\end{dfn}

We consider the product on $K(\Var{\fk})$ given by 
$$
 [X] \cdot [Y] = [(X \times Y)_{\red}],
$$
where the product is understood to be over $\Spec(\fk)$.
It makes $K(\Var{\fk})$ a commutative associative ring with the unit $1 = [\Spec(\fk)]$.
Hereafter we call this ring by 
the \emph{Grothendieck ring of varieties over $\fk$}.

We denote by 
$$
  \bbL := [\bbA^1] \in K(\Var{\fk})
$$
the class of the affine line.

Let us see some examples of computations.

\begin{lem}
\label{lem:bbL} 
\begin{enumerate}
\item 
For the $d$-th general linear group $\GL_d$, 
$$
 [\GL_d] = \bbL^{d(d-1)/2} \prod_{k=1}^{d}(\bbL^{k}-1) 
         = \bbL^{d(d-1)/2} (\bbL-1)^d [d]^+_{\bbL}!,
$$
where $[n]^+_q := 1+q+\cdots +q^{n-1}$ 
and $[n]^+_q! := [1]^+_q \cdot [2]^+_q \cdot \cdots \cdot [n]^+_q$
for $n \in \bbZ_{\ge 0}$ and $q$ an indeterminate.

\item
For the Grassmann variety $\Gr(d,n)$ with $0 \le d \le n$, 
$$
 [\Gr(d,n)] = {\genfrac{[}{]}{0pt}{}{n}{d}}^+_{\bbL} := 
 \dfrac{[n]^+_{\bbL}!}{[d]^+_{\bbL}! [n-d]^+_{\bbL}!}.
$$
\end{enumerate}
\end{lem}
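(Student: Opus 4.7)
The plan is to prove (i) by identifying $\GL_d$ with a Stiefel variety of ordered bases of $\bbA^d$ and stripping off vectors one at a time, and to prove (ii) by invoking the Schubert cell decomposition of the Grassmannian.

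For (i), write $V_k^d := \{(v_1,\ldots,v_k) \in (\bbA^d)^k : v_1,\ldots,v_k \text{ linearly independent}\}$, so that $V_d^d \cong \GL_d$ (viewing a matrix as its column vectors) and $V_1^d = \bbA^d \setminus \{0\}$. The forgetful map $V_k^d \to V_{k-1}^d$ has fiber over $(v_1,\ldots,v_{k-1})$ equal to $\bbA^d \setminus \mathrm{span}(v_1,\ldots,v_{k-1})$, whose class is $\bbL^d - \bbL^{k-1}$. A standard stratification of $V_{k-1}^d$ according to which $(k-1)\times(k-1)$ minor of the coordinate matrix is nonvanishing shows that this is a piecewise trivial fibration, so multiplicativity of the motivic class in $K(\Var{\fk})$ gives
$$
  [\GL_d] = [V_d^d] = \prod_{k=1}^{d}(\bbL^d - \bbL^{k-1}).
$$
Factoring $\bbL^{k-1}$ out of the $k$-th factor and reindexing yields $\bbL^{d(d-1)/2}\prod_{k=1}^{d}(\bbL^k - 1)$; the identity $\bbL^k - 1 = (\bbL - 1)[k]^+_\bbL$ then produces the $(\bbL-1)^d [d]^+_\bbL!$ form.

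For (ii), my plan is to use the Schubert decomposition of $\Gr(d,n)$ into locally closed cells indexed by Young diagrams $\lambda$ inside the $d \times (n-d)$ rectangle, with the cell $C_\lambda$ isomorphic to $\bbA^{|\lambda|}$ via the standard reduced row echelon coordinates. Summing the classes yields
$$
  [\Gr(d,n)] = \sum_{\lambda \subseteq d \times (n-d)} \bbL^{|\lambda|},
$$
which is the classical polynomial expression for $\genfrac{[}{]}{0pt}{}{n}{d}^+_\bbL$. Alternatively, one could exploit the Zariski-locally trivial principal $\GL_d$-bundle $V_d^n \to \Gr(d,n)$ to obtain $[V_d^n] = [\Gr(d,n)] \cdot [\GL_d]$, apply (i), and simplify using $[V_d^n] = \prod_{k=0}^{d-1}(\bbL^n - \bbL^k)$ computed as in part (i).

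The main obstacle in both parts is the verification of piecewise triviality: $K(\Var{\fk})$ only respects such fibrations, not arbitrary ones, so even though the statements look like straightforward point counts, one must point to an actual affine stratification of the base over which the total space splits as a product. For the Schubert cells and for the Stiefel tower this is entirely classical, but it must be noted explicitly to legitimate the multiplicative computation of $[-]$.
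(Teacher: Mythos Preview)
Your proof is correct. For (1) the paper merely cites \cite[Lemma 2.6]{B:2012}, so your Stiefel-tower argument fills in what the paper omits (and is the standard proof found there). For (2) the paper proceeds differently: it uses the transitive $\GL_n$-action on $\Gr(d,n)$ with isotropy $\GL_{n,d} \cong \GL_d \times \GL_{n-d} \times M_{d,n-d}$, obtaining $[\GL_n] = [\Gr(d,n)] \cdot [\GL_d] \cdot [\GL_{n-d}] \cdot \bbL^{d(n-d)}$, and then invokes (1). Your alternative approach via the Stiefel bundle $V_d^n \to \Gr(d,n)$ is a close variant of this homogeneous-space argument. Your primary approach via the Schubert cell decomposition is genuinely different: it yields the polynomial identity directly without having to cancel factors of $(\bbL-1)^d [d]^+_\bbL!\,[n-d]^+_\bbL!$, at the cost of invoking the combinatorial identity $\sum_{\lambda \subseteq d \times (n-d)} q^{|\lambda|} = \genfrac{[}{]}{0pt}{}{n}{d}^+_q$. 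Both routes are standard; the paper's has the virtue of reusing (1) twice with no extra combinatorics, while yours avoids any reliance on inverting classes in $K(\Var{\fk})$.
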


\begin{proof}
(1) 
See \cite[Lemma 2.6]{B:2012}.

(2)
Considering the transitive action of $\GL_n$ on $\Gr(d,n)$,
we have $[\Gr(d,n)] = [\GL_n] \cdot [\GL_{n,d}]$,
where $\GL_{n,d}$ is the isotropy subgroup of the fixed $d$-dimensional 
subspace $k^d$ of the total $n$-dimensional space $k^n$.
Since $\GL_{n,d} \simeq \GL_d \times \GL_{n-d} \times M_{d,n-d}$,
where $M_{p,q}$ is the collection of $p \times q$ matrices,
we have
\begin{align*}
 [\GL_n] &= [\Gr(d,n)] \cdot [\GL_{n,d}] \\
         &= [\Gr(d,n)] \cdot [\GL_d] \cdot [\GL_{n-d}] \cdot \bbL^{d(n-d)}.
\end{align*}
Using (1), one can check the result immediately.
\end{proof}

\subsection{Kapranov's motivic zeta function}

Let us now introduce another example of the calculation in $K(\Var{\fk})$,
namely Kapranov's motivic zeta function \cite{K}.
This function is important since it will appear 
in the study of motivic Hall algebra of curve
as the ``structure function". 

Let us recall the symmetric product of a variety.
Let $X$ be a quasi-projective variety over $\fk$. 
For every $n \in \bbZ_{\ge 1}$, 
we have a natural action of the symmetric group $\grpS_n$ on the product $X^n$. 
Since $X^n$ is again quasi-projective,
the quotient of $X^n$ by $\grpS_n$ exists. 
This is the $n$-th symmetric product of $X$, which we denote by $\Sym^n(X)$.
We define $\Sym^0(X) := \Spec(k)$ for convention.

If $\fk$ is perfect, then $X^n$ is reduced, so that $\Sym^n(X)$ is also reduced. 

Kapranov \cite{K} introduced the motivic analogue of the Hasse-Weil zeta functions 
for varieties as the generating series of the classes of the symmetric products 
in the Grothendieck ring $K(\Var{\fk})$.
Before writing down the definition, we follow \cite{Mu} to introduce 
a description of $K(\Var{\fk})$ 
in terms of quasi-projective varieties.

\begin{dfn}
Let $\wt{K}(\Var{\fk})$ be the quotient of $K(\Var{\fk})$ 
by the subgroup generated by the relations $[X] - [Y]$, 
where there is a a radicial surjective morphism $X \to Y$ of varieties over $\fk$. 
\end{dfn}

Note that $\wt{K}(\Var{\fk})$ is a quotient ring of $K(\Var{\fk})$.
This is because if $f: X \to Y$ is surjective and radicial, 
then for every variety $Z$, the morphism $f \times \Id_Z: X \times Z \to Y \times Z$ 
is surjective and radicial, for $f \times \Id_Z$ is the base-change of $f$ 
with respect to the projection $Y \times Z \to Y$.

If $\chr(k) = 0$, 
then the canonical quotient map $K(\Var{\fk}) \to \wt{K}(\Var{\fk})$ is an isomorphism.

\begin{dfn}
\begin{enumerate}
\item 
Let $K^{\qpr}(\Var{\fk})$ be the quotient of the free abelian group 
on isomorphism classes of quasi-projective varieties over $\fk$, modulo the relations
$$ 
 [X] = [Y] + [X \setminus Y]
$$
where $X$ is a quasi-projective variety and $Y$ is a closed subvariety of $X$. 
We have a group homomorphism from $K^{\qpr}(\Var{\fk})$  to $K(\Var{\fk})$.
Denote it by 
$$
 \Phi: K^{\qpr}(\Var{\fk}) \to K(\Var{\fk}).
$$

\item
Define $\wt{K}^{\qpr}(\Var{\fk})$ to be the quotient of $K^{\qpr}(\Var{\fk})$ by
the relations $[X] - [Y]$, where we have a surjective, radicial morphism of 
quasi-projective varieties $f: X \to Y$. 
Denote the corresponding group homomorphism by 
$$
 \wt{\Phi}: \wt{K}^{\qpr}(\Var{\fk}) \to \wt{K}^{\qpr}(\Var{\fk}).
$$
\end{enumerate}
\end{dfn}

\begin{fct}[{\cite[Proposition 7.27]{Mu}}]
If $\chr(k)=0$, then 
$\Phi$ and $\wt{\Phi}$ are both isomorphisms.
\end{fct}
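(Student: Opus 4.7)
The plan is to construct an explicit inverse $\Psi : K(\Var{\fk}) \to K^{\qpr}(\Var{\fk})$ that sends the class of a variety $X$ to the sum of classes of strata in any stratification of $X$ by locally closed quasi-projective subvarieties. The same formula will invert $\wt{\Phi}$, with the characteristic-zero hypothesis entering only in passing from $\Phi$ to $\wt{\Phi}$.

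First I would dispose of the surjectivity of $\Phi$, which holds in any characteristic: by Noetherian induction on $\dim X$, choose a dense affine open $U \subseteq X$ (which exists since every variety has an affine open cover), so that $U$ is quasi-projective and $[X] = [U] + [X \setminus U]$ in $K(\Var{\fk})$ by the scissor relation, with $\dim(X \setminus U) < \dim X$; the inductive hypothesis then places both summands in the image of $\Phi$.

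For injectivity I would define $\Psi$ as follows. Given a variety $X$, iterate the dense-affine-open argument to obtain a stratification $X = \bigsqcup_i X_i$ into locally closed quasi-projective subvarieties, and set $\Psi([X]) := \sum_i [X_i] \in K^{\qpr}(\Var{\fk})$. The work lies in checking (i) that $\Psi$ is independent of the chosen stratification, and (ii) that $\Psi$ respects the defining relations of $K(\Var{\fk})$. For (i) one passes to a common refinement and uses the scissor relations in $K^{\qpr}$; for (ii), given $Y \subseteq X$ closed one chooses a stratification of $X$ whose strata are each contained either in $Y$ or in $X \setminus Y$, making the identity $\Psi([X]) = \Psi([Y]) + \Psi([X \setminus Y])$ tautological. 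The identities $\Psi \circ \Phi = \operatorname{id}$ (clear from the trivial one-stratum stratification of a quasi-projective variety) and $\Phi \circ \Psi = \operatorname{id}$ (clear from the scissor relation in $K(\Var{\fk})$) then complete the injectivity argument.

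The role of $\chr(\fk) = 0$ enters through the $\wt{\Phi}$ half of the statement: in characteristic zero, a radicial surjective morphism of reduced varieties is automatically an isomorphism (being purely inseparable of degree one on each residue field forces a trivial residue field extension, so the universal homeomorphism becomes an iso on reduced structures), whence the additional relations defining $\wt{K}(\Var{\fk})$ and $\wt{K}^{\qpr}(\Var{\fk})$ hold already in $K(\Var{\fk})$ and $K^{\qpr}(\Var{\fk})$; thus the quotient maps $K \to \wt{K}$ and $K^{\qpr} \to \wt{K}^{\qpr}$ are bijections, and $\wt{\Phi}$ inherits the isomorphism property from $\Phi$. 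The main obstacle is the well-definedness step for $\Psi$: showing that refining a single quasi-projective stratum into finer quasi-projective pieces does not change the image in $K^{\qpr}(\Var{\fk})$. This ultimately reduces to nested applications of the dense-affine-open decomposition together with the scissor relation in $K^{\qpr}$, but carrying out the bookkeeping on a common refinement of two arbitrary stratifications requires the most care.
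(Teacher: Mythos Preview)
The paper does not supply its own proof of this statement; it is recorded as a Fact with a citation to \cite[Proposition 7.27]{Mu}. So there is nothing in the paper to compare your argument against directly. That said, let me comment on the argument itself.

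Your treatment of $\Phi$ is correct and is the standard proof: stratify an arbitrary variety into locally closed quasi-projective pieces, check that the resulting class in $K^{\qpr}(\Var{\fk})$ is independent of the stratification by passing to common refinements, and verify compatibility with the scissor relation. This part does not use the characteristic-zero hypothesis, and indeed $\Phi$ is an isomorphism over any field.

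There is, however, a genuine gap in your passage to $\wt{\Phi}$. You assert that in characteristic zero a radicial surjective morphism of reduced varieties is automatically an isomorphism, reasoning that a universal homeomorphism between reduced schemes must be an isomorphism. This is false: the normalization of the cuspidal cubic, $\bbA^1 \to \{y^2 = x^3\}$ via $t \mapsto (t^2,t^3)$, is finite, radicial, and surjective (hence a universal homeomorphism) between reduced varieties over any field, yet it is not an isomorphism. What is true, and what you actually need, is the weaker statement that such a morphism induces an equality of classes $[X] = [Y]$ in $K(\Var{\fk})$. One way to see this: stratify $Y$ so that each stratum and its preimage are smooth; a radicial surjective morphism between smooth varieties in characteristic zero is \'etale (e.g.\ by generic smoothness plus a dimension count, or by the vanishing of the relative cotangent sheaf), and an \'etale radicial surjection is an isomorphism. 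Summing over strata gives $[X] = [Y]$. With this correction your deduction that $K(\Var{\fk}) \to \wt{K}(\Var{\fk})$ and $K^{\qpr}(\Var{\fk}) \to \wt{K}^{\qpr}(\Var{\fk})$ are isomorphisms, and hence that $\wt{\Phi}$ inherits bijectivity from $\Phi$, goes through.
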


Now we can explain the definition of the motivic zeta function.

\begin{dfn}[\cite{K,Mu} ]
For a quasi-projective variety $X$ over a field $\fk$, 
the \emph{motivic zeta function} of $X$ is defined to be
$$
 \zeta_{\mot}(X;z) := \sum_{n \ge 0} [\Sym^n(X)] z^n
 \in 1 + z \cdot \wt{K}(\Var{\fk})[[z]].
$$
\end{dfn}

\begin{fct}[{\cite[Proposition 7.28]{Mu}}]
Assume $\fk$ is perfect.
The map $[X] \to \zeta_{\mot}(X;z)$  for $X$ a quasi-projective variety 
defines a group homomorphism $K(\Var{\fk}) \to 1 + t \cdot \wt{K}(\Var{\fk})[[t]]$,
which factors through $\wt{K}(\Var{\fk})$.
\end{fct}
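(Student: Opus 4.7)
The plan is to verify that the assignment $[X] \mapsto \zeta_{\mot}(X;z)$ respects the scissor relation $[X] = [Y] + [X \setminus Y]$ — this is the only nontrivial axiom, since the codomain is a multiplicative group — and then separately to check invariance under radicial surjections so that the map descends to $\wt{K}(\Var{\fk})$. Since $\Sym^n$ requires quasi-projectivity, I would work throughout with the presentation of $K(\Var{\fk})$ by quasi-projective varieties, which is legitimate by the preceding Fact.

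The key identity to establish is
$$
 \zeta_{\mot}(X;z) = \zeta_{\mot}(Y;z) \cdot \zeta_{\mot}(X \setminus Y;z)
$$
for any closed immersion $Y \hookrightarrow X$ with open complement $U := X \setminus Y$. Coefficient-by-coefficient, this reduces to
$$
 [\Sym^n(X)] = \sum_{a+b=n} [\Sym^a(Y)] \cdot [\Sym^b(U)] \quad \text{in } \wt{K}(\Var{\fk}).
$$
The heart of the argument is to stratify $\Sym^n(X)$. Decompose
$$
 X^n = \bigsqcup_{I \subseteq \{1,\ldots,n\}} Y^I \times U^{\{1,\ldots,n\} \setminus I}
$$
into locally closed pieces. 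The permutation action of $\grpS_n$ on $X^n$ acts transitively on the subsets $I$ of fixed cardinality $a$, with stabilizer (of $I = \{1,\ldots,a\}$) equal to $\grpS_a \times \grpS_{n-a}$. Taking the quotient by $\grpS_n$, each $\grpS_n$-orbit becomes the locally closed stratum
$$
 S_a = (Y^a \times U^{n-a})/(\grpS_a \times \grpS_{n-a}) \cong \Sym^a(Y) \times \Sym^{n-a}(U)
$$
of $\Sym^n(X)$. The strata $S_0,\ldots,S_n$ are pairwise disjoint and cover $\Sym^n(X)$, so iterating the scissor relation yields the displayed identity already in $K(\Var{\fk})$, hence in $\wt{K}(\Var{\fk})$. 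Multiplying the generating series then gives the required multiplicativity.

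For the factoring through $\wt{K}(\Var{\fk})$, I would show that a radicial surjection $f \colon X' \to X$ of quasi-projective varieties induces a radicial surjection $\Sym^n(f) \colon \Sym^n(X') \to \Sym^n(X)$ for every $n$. Radicial surjectivity is preserved under fibre products, so $f^n$ is radicial surjective, and it is then straightforward to transfer this property across the $\grpS_n$-quotient, yielding $[\Sym^n(X')] = [\Sym^n(X)]$ in $\wt{K}(\Var{\fk})$, i.e.\ $\zeta_{\mot}(X';z)=\zeta_{\mot}(X;z)$. Perfectness of $\fk$ is used to guarantee that each $\Sym^n(X)$ is reduced.

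The main obstacle is promoting the set-theoretic/$\fk$-point stratification of $\Sym^n(X)$ to an honest decomposition into locally closed subvarieties: one must check that the natural morphism $\Sym^a(Y) \times \Sym^{n-a}(U) \to \Sym^n(X)$ is a locally closed immersion onto $S_a$. This is a compatibility of finite-group quotients with the open/closed decomposition, which in positive characteristic may only hold up to radicial isomorphism — and this is exactly why the construction lands in $\wt{K}(\Var{\fk})$ rather than in $K(\Var{\fk})$.
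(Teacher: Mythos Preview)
The paper does not give its own proof of this statement; it is quoted as a Fact from \cite[Proposition 7.28]{Mu} with no argument supplied. Your proposal is the standard proof (essentially the one in the cited reference and in Kapranov's original paper): stratify $\Sym^n(X)$ by the number of points landing in the closed piece $Y$, identify the stratum $S_a$ with $\Sym^a(Y)\times\Sym^{n-a}(U)$, and sum. You have also correctly isolated the one genuine subtlety --- that in positive characteristic the natural map $\Sym^a(Y)\times\Sym^{n-a}(U)\to S_a$ need only be a universal homeomorphism rather than an isomorphism, which is precisely why the target must be $\wt{K}(\Var{\fk})$ rather than $K(\Var{\fk})$.
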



Here is the rationality result of the motivic zeta function for a smooth projective curve,
originally proved by Kapranov.

\begin{fct}[{\cite[(1.1.9) Theorem]{K}, \cite[Theorem 7.33]{Mu}}]
\label{fct:Mustata:thm}
Let $\fk$ be a perfect field. 
If $X$ is a smooth, geometrically connected, 
projective curve of genus $g$ over $\fk$ 
which has a $\fk$-rational point, 
then $\zeta_{\mot}(X;z)$ is a rational function. 
Moreover, we have
$$
 \zeta_{\mot}(X;z) = \dfrac{f(z)}{(1-z)(1-\bbL z)}
$$
for a polynomial $f$ of degree $\le 2g$ with coefficients in $\wt{K}(\Var{\fk})$.
\end{fct}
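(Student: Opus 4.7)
The plan is to reduce the rationality to a computation in the stable range $n \ge 2g-1$ via the Abel-Jacobi map. Let $g := g(X)$, write $J := \Pic^0(X)$ for the Jacobian of $X$, and let $\alpha_n : \Sym^n(X) \to \Pic^n(X)$ be the Abel-Jacobi map sending an effective divisor to its linear equivalence class. The $\fk$-rational point $p_0$ of $X$ provides the isomorphism $\Pic^n(X) \simeq J$ by translating with $n[p_0]$, and secures a Poincar\'e line bundle on $X \times \Pic^n(X)$ normalized along $\{p_0\} \times \Pic^n(X)$.

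First I would show that $\alpha_n$ is a Zariski-locally trivial $\bbP^{n-g}$-bundle whenever $n \ge 2g-1$. Riemann-Roch applied to a degree-$n$ divisor $D$ with $n \ge 2g-1$ yields $h^0(D) = n-g+1$ and $h^1(D) = 0$, so the fiber $\alpha_n^{-1}([D])$ is the complete linear system $|D| \simeq \bbP^{n-g}$. Pushforward of the Poincar\'e sheaf along the projection $X \times \Pic^n(X) \to \Pic^n(X)$ is, by the constancy of cohomology dimensions, a vector bundle $\shE_n$ of rank $n-g+1$ on $\Pic^n(X) \simeq J$, and $\Sym^n(X) \simeq \bbP(\shE_n)$. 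Stratifying $J$ by a Zariski-open cover trivializing $\shE_n$ and applying the scissor relation gives
\begin{equation*}
[\Sym^n(X)] \;=\; [J]\cdot [\bbP^{n-g}] \;=\; [J]\cdot \bigl(1 + \bbL + \cdots + \bbL^{n-g}\bigr) \qquad (n \ge 2g-1)
\end{equation*}
in $\wt{K}(\Var{\fk})$.

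Next I would verify that $(1-z)(1-\bbL z)\,\zeta_{\mot}(X;z)$ is a polynomial of degree at most $2g$. For $n \ge 2g+1$ the three terms $[\Sym^n(X)]$, $[\Sym^{n-1}(X)]$ and $[\Sym^{n-2}(X)]$ all obey the formula above, so the coefficient of $z^n$ in the product equals
\begin{equation*}
[J]\cdot \Bigl( (1+\bbL+\cdots+\bbL^{n-g}) - (1+\bbL)(1+\bbL+\cdots+\bbL^{n-g-1}) + \bbL\,(1+\bbL+\cdots+\bbL^{n-g-2})\Bigr),
\end{equation*}
which telescopes to $[J](\bbL^{n-g} - \bbL\cdot \bbL^{n-g-1}) = 0$. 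Hence only the coefficients of $z^0, z^1, \dots, z^{2g}$ can be nonzero, producing the desired polynomial $f(z) \in \wt{K}(\Var{\fk})[z]$ of degree at most $2g$.

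The crux of the argument is the projective-bundle identity $[\bbP(\shE_n)] = [J]\cdot [\bbP^{n-g}]$ in $\wt{K}(\Var{\fk})$, and this is where the $\fk$-rational point is indispensable: without it, $\alpha_n$ would only be a \emph{form} of a projective bundle and its total space could carry a nontrivial Brauer-type twist destroying the multiplicativity of classes. The point $p_0$ lets us realize $\Sym^n(X)$ as an honest projectivization of a vector bundle, after which Zariski stratification and cut-and-paste close the argument; the passage to $\wt{K}(\Var{\fk})$ together with the perfectness hypothesis absorbs the residual radicial subtleties coming from the symmetric quotients $X^n \to \Sym^n(X)$ in positive characteristic.
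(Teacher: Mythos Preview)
Your proof is correct and follows essentially the same route as the paper's: both use the Abel--Jacobi map together with Riemann--Roch to identify $[\Sym^n(X)] = [J]\cdot[\bbP^{n-g}]$ for $n \ge 2g-1$, and then conclude rationality from this stable-range formula. The paper sums the resulting geometric tail explicitly, while you equivalently verify that $(1-z)(1-\bbL z)\,\zeta_{\mot}(X;z)$ has vanishing coefficients beyond degree $2g$; you are also more explicit than the paper about why the fibration is Zariski-locally trivial (via the Poincar\'e bundle furnished by the rational point), which is exactly the point the paper defers to \cite{Mu} for general $\fk$.
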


\begin{proof}
One can prove this statement similarly as in the case of the classical Hasse-Weil zeta function.
Assume for simplicity that $\fk$ is algebraically closed, 
so that a line bundle of degree $1$ 
exists on $X$.
Recall that the Abel map
$a:\Sym^d(X) \to \Pic^0(X)$ 
to the Picard variety of degree $0$
is the projective bundle with the fiber 
$a^{-1}(\shL) \cong \bbP(H^0(X,\shL)^\vee)$ 
for each $\shL \in \Pic^0(X)$, 
so by the Riemann-Roch formula 
the fiber is isomorphic to $\bbP^{d-g}_k$ for $d\ge2g-1$,
where $\bbP^n_k$ denotes the projective space of dimension $n$ over $\fk$.
Thus we have 
\begin{align*}
 \zeta_{\mot}(X;z)
& = \sum_{d = 0}^{2g-2}[\Sym^d(X)] z^d
  + \sum_{d \ge \min\{2g-1,0\}}[\Pic^0(X)] \cdot [\bbP^{d-g}_k] z^d.
\end{align*}
If $g \ge 1$, then a calculation gives 
\begin{align*}
 \zeta_{\mot}(X;z)
& = \sum_{d = 0}^{2g-2}[\Sym^d(X)] z^d
    + \dfrac{[\Pic^0(X)] }{(1-z)(1-\bbL z)}
      \dfrac{(\bbL^{1-g}-1)+ z (\bbL-\bbL^{1-g}) }{\bbL-1},
\end{align*}
and this expression gives the consequence.
The case $g=0$ is
\begin{align}\label{eq:Zmot:g=0}
 \zeta_{\mot}(\bbP^1;z) = \dfrac{1}{(1-z)(1-\bbL z)}.
\end{align}
For the proof in the case of the general base field $\fk$, see \cite{Mu}.
\end{proof}

%


\subsection{Grothendieck ring of stacks}

By the word `stack', we mean an Artin stack which is locally of finite type over $\fk$ 
unless otherwise stated.
We denote by $\St{\fk}$ the 2-category of Artin stacks of finite type over $\fk$.
Given a scheme $S$ over $\fk$ and a stack $\stkX$, 
we denote by $\stkX(S)$ the groupoid of $S$-valued points of $\stkX$.

\begin{dfn}
\begin{enumerate}
\item 
A stack $\stkX$ locally of finite type over $\fk$ is said to have \emph{affine stabilizers} 
if for every $\fk$-valued point $x \in X(\fk)$
the group $\Iso_{\fk}(x, x)$ of isomorphisms is affine.

\item
A morphism $f: \stkX \to \stkY$ in the category $\St{\fk}$ will be called 
a \emph{geometric bijection}
if it is representable and the induced functor on groupoids of $\fk$-valued points
$f(\fk): \stkX(\fk) \to \stkY(\fk)$ is an equivalence of categories.
\end{enumerate}
\end{dfn}

\begin{dfn}
Let $K(\St{\fk})$ be the free abelian group spanned by isomorphism classes of stacks
of finite type over $\fk$ with affine stabilizers, modulo relations
\begin{enumerate}
\item[(a)]
$[\stkX_1 \sqcup \stkX_2] = [\stkX_1] + [\stkX_2]$ for 
every pair of stacks $\stkX_1$ and $\stkX_2$,
\item[(b)]
$[\stkX] = [\stkY]$ for every geometric bijection $f: \stkX \to \stkY$,
\item[(c)]
$[\stkX_1] = [\stkX_2]$ for every pair of Zariski fibrations 
$f_i: \stkX_i  \to \stkY$ with the same fibres.
\end{enumerate}
\end{dfn}

Although our abelian group $K(\St{\fk})$ looks extremely large,
there is a comparison between $K(\St{\fk})$ and the $K(\Var{\fk})$.
Fiber product of stacks over $\fk$ gives $K(\St{\fk})$ the structure of a commutative ring. 
We have a homomorphism of commutative rings 
\begin{equation}\label{eq:Kvar-Kst}
K(\Var{\fk}) \longto{} K(\St{\fk})
\end{equation}
by considering a variety as a representable stack.
Now we cite 

\begin{fct}[{\cite[Lemma 3.9]{B:2012}}]
\label{fct:var-stack}
The morphism \eqref{eq:Kvar-Kst} induces an isomorphism of commutative rings
$$
Q: K(\Var{\bbC})\bigl[[\GL_d]^{-1} \mid d \in \bbZ_{\ge1}\bigr]  
   \longsimto K(\St{\bbC}).
$$
\end{fct}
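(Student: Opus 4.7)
The plan is to verify that $Q$ is well-defined on the localization, then prove surjectivity via a stratification theorem for stacks, and finally exhibit an inverse homomorphism to establish injectivity.

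First I would show that each class $[\GL_d]$ is invertible in $K(\St{\bbC})$, so that $Q$ factors through the localization. Consider the classifying stack $B\GL_d = [\Spec(\bbC)/\GL_d]$. Since $\GL_d$-torsors on any scheme are Zariski-locally trivial ($\GL_d$ is a special group), both the universal torsor $\Spec(\bbC) \to B\GL_d$ and the projection $\GL_d \times B\GL_d \to B\GL_d$ are Zariski fibrations over $B\GL_d$ with fibre $\GL_d$. Relation (c) in the definition of $K(\St{\bbC})$ therefore gives $[\Spec(\bbC)] = [\GL_d] \cdot [B\GL_d]$, so $[B\GL_d] = [\GL_d]^{-1}$.

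For surjectivity, I would invoke a stratification theorem (in the spirit of Kresch's results on cycle groups for Artin stacks): any stack of finite type over $\bbC$ with affine stabilizers admits a finite stratification by locally closed substacks of the form $[U_i/\GL_{d_i}]$ with each $U_i$ a quasi-projective variety. Given such a stratification, relation (a) yields $[\stkX] = \sum_i [U_i/\GL_{d_i}]$, and since $U_i \to [U_i/\GL_{d_i}]$ is a $\GL_{d_i}$-torsor, the same argument via relation (c) as above gives $[U_i] = [\GL_{d_i}] \cdot [U_i/\GL_{d_i}]$. Hence $[\stkX] = \sum_i [U_i] \cdot [\GL_{d_i}]^{-1}$ lies in the image of $Q$.

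For injectivity, one defines an inverse $R: K(\St{\bbC}) \to K(\Var{\bbC})\bigl[[\GL_d]^{-1} \mid d \in \bbZ_{\ge 1}\bigr]$ by sending a stack $\stkX$ to $\sum_i [U_i] \cdot [\GL_{d_i}]^{-1}$ for a choice of stratification as above. The bulk of the routine work is to show that $R$ is independent of the chosen stratification and that it respects the relations (a)--(c); standard refinement arguments (passing to a common refinement of two stratifications, and handling Zariski fibrations fibrewise) take care of this. The principal obstacle in the whole argument is therefore not the algebraic bookkeeping but the stratification theorem itself, which is a genuinely nontrivial geometric input relying on algebraization results for stacks with affine stabilizers over a characteristic zero field.
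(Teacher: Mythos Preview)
The paper does not give its own proof of this statement: it is recorded as a \emph{Fact} with a bare citation to \cite[Lemma 3.9]{B:2012} and no further argument. Your sketch is essentially the proof that appears in that reference: invertibility of $[\GL_d]$ via the identity $[\GL_d]\cdot[B\GL_d]=1$ coming from the speciality of $\GL_d$, surjectivity from Kresch's stratification of finite-type Artin stacks with affine stabilizers by global quotients $[U/\GL_d]$, and injectivity by building an inverse through the same stratification. So there is nothing to compare against in the present paper, and your approach matches the one in the cited source.
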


See also \cite[Theorem 3.10]{T} and \cite[Theorem 4.10]{Joy:2007:QJM}
for the related results.


\subsection{Relative Grothendieck ring}

Fix an algebraic stack $\stkS$ which is locally of finite type over $\fk$
and has affine stabilizers. 
There is a $2$-category of algebraic stacks over $\stkS$. 
Let $\St{\stkS}$ denote the full subcategory consisting of objects $f: \stkX \to \stkS$ 
for which $\stkX$ is of finite type over $\fk$. 
Such an object will be said to have affine stabilizers if the stack $\stkX$ has.

\begin{dfn}
Let $K(\St{\stkS})$ be the free abelian group spanned by 
isomorphism classes of objects of $\St{\stkS}$ 
with affine stabilisers, modulo relations
\begin{enumerate}
\item[(a)]
for every pair of objects $f_1: \stkX_1 \to \stkS$ and $f_2: \stkX_2 \to \stkS$, 
a relation
$$
 [\stkX_1 \sqcup \stkX_2 \longto{f_1 \sqcup f_2} \stkS] 
=[\stkX_1 \longto{f_1} \stkS] + [\stkX_2 \longto{f_2} \stkS] 
$$

\item[(b)]
for every commutative diagram
$$
 \xymatrix{
 \stkX_1 \ar[rr]^{g}  \ar[rd]_{f_1} & & \stkX_2 \ar[ld]^{f_2} \\ & \stkS }
$$
with $g$ a geometric bijection, a relation
$$
 [\stkX_1 \longto{f_1} \stkS] = [\stkX_2 \longto{f_2} \stkS] 
$$

\item[(c)]
for every pair of Zariski fibrations $h_1: \stkX_1 \to Y$ and $h_2: \stkX_2 \to Y$ 
with the same fibers, 
and every morphism $g: \stkY \to \stkS$, a relation
$$
 [\stkX_1 \longto{g \circ h_1} \stkS] = [\stkX_1 \longto{g \circ h_2} \stkS].
$$
\end{enumerate}
\end{dfn}

The group $K(\St{\stkS})$ has the structure of a $K(\St{\fk})$-module, 
defined by setting 
$$
 [\stkX] \cdot [\stkY \longto{f} \stkS] 
= [\stkX \times \stkY \longto{f \circ p_2} \stkS]
$$
and extending linearly.

\begin{fct}[{\cite[\S3.5]{B:2012}}]
Assume that all stacks appearing have affine stabilizers.
\begin{enumerate}
\item 
A morphism of stacks $a: \stkS \to \stkT$ induces a \emph{pushforward} 
morphism of $K(\St{\fk})$-modules
$$
 a_*: K(\St{\stkS}) \longto{} K(\St{\stkT})
$$
sending $[\stkX \longto{f} \stkS]$ to $[\stkX \longto{a \circ f} \stkT]$.

\item
A morphism of stacks $a: \stkS \to \stkT$ of finite type induces a \emph{pullback} 
morphism of $K(\St{\fk})$-modules
$$
 a^*: K(\St{\stkT}) \longto{} K(\St{\stkS})
$$
sending $[\stkY \longto{g} \stkT]$ to $[\stkX \longto{f} \stkS]$,
where $f$ is the morphism appearing in the following Cartesian square.
$$
\xymatrix{
 \stkX \ar[r] \ar[d]_{f} & \stkY \ar[d]^{g} \\ \stkS \ar[r]_{a} & \stkT}
$$

\item
The pushforwards and pullbacks are functorial, i.e.,
$(b \circ a)_* = b_* \circ a_*$ and $(b \circ a)^* = a^* \circ b^*$
for composable morphisms $a,b$ of stacks with the required properties. 

\item
Given a Cartesian square 
$$
\xymatrix{
 \stkU \ar[r]^c \ar[d]_{d} & \stkV \ar[d]^{b} \\ \stkS \ar[r]_{a} & \stkT}
$$
of stacks, we have 
$$
 b^*\circ a_* = c_* \circ d^*: K(\St{\stkS}) \longto{} K(\St{\stkV}).
$$

\item
For every pair of stacks $(\stkS_1,\stkS_2)$, 
we have a morphism (K\"unneth morphism)
$$
 K: K(\St{\stkS_1}) \otimes  K(\St{\stkS_2}) \longto{} K(\St{\stkS_1 \times \stkS_2})
$$
of $K(\St{\fk})$-modules given by 
$$
 [\stkX_1 \longto{f_1} \stkS_1] \otimes [\stkX_2 \longto{f_2} \stkS_2]
 \longmapsto [\stkX_1 \times \stkX_2 \longto{f_1 \times f_2} \stkS_1 \times \stkS_2].
$$
\end{enumerate}
\end{fct}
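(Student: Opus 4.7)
The plan is to verify the five statements in order. For (1), (2), and (5) the substantive work is to check that the proposed assignment on generators respects the three defining relations (a), (b), (c) of $K(\St{\stkS})$; the identities (3) and (4) then follow from the universal property of fiber products.

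First I would treat the pushforward along $a:\stkS\to\stkT$ in (1). On a generator $[\stkX\to\stkS]$ the assignment is just post-composition with $a$, so disjoint unions are visibly preserved; the properties of being a geometric bijection or a Zariski fibration with a given fiber depend only on the morphism and not on where its target maps further, so relations (b) and (c) survive as well. The pullback $a^*$ in (2) is more delicate. One must first confirm that $\stkX:=\stkY\times_{\stkT}\stkS$ lies again in $\St{\stkS}$: the finite-type hypothesis on $a$, combined with the fact that $\stkY\to\stkT$ factors through a quasi-compact open substack of $\stkT$, guarantees that the pullback is of finite type; and affine stabilizers are preserved because a stabilizer in a fiber product sits inside the product of the given affine stabilizers. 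Given this, respecting (a), (b), (c) reduces to the stability of representability, of geometric bijections, and of Zariski fibrations (with their fibers) under base change.

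Functoriality in (3) is immediate for $a_*$, and for $a^*$ it follows from the universality of fiber products, which says that two Cartesian squares stacked vertically form a Cartesian rectangle. The base-change identity (4) reduces to the canonical identification over $\stkV$
$$
\stkX\times_\stkS \stkU \;=\;\stkX\times_\stkS(\stkS\times_\stkT\stkV)\;=\;\stkX\times_\stkT\stkV,
$$
which matches $c_* d^*[\stkX\to\stkS]$ with $b^* a_*[\stkX\to\stkS]$. For the Künneth morphism (5), fixing one slot reduces the question to preservation of (a)--(c) under product with a fixed stack — disjoint unions, geometric bijections, and Zariski fibrations with their fibers are all stable under external product — and the symmetric argument in the other slot yields bilinearity.

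The main obstacle is the base-change stability of two technical properties: being a geometric bijection and being a Zariski fibration with a prescribed fiber. Both properties are essentially designed so that this should hold — representability and the bijectivity of the induced functor on $\fk$-groupoids survive base change along representable morphisms, and a Zariski fibration pulls back to a Zariski fibration with the same fiber — but the verification requires setting up and chasing the relevant Cartesian diagrams carefully. Once this geometric input is in place, the remaining assertions reduce to formal manipulation of fiber-product identities.
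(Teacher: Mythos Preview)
The paper does not prove this statement: it is recorded as a \texttt{Fact} with a citation to \cite[\S3.5]{B:2012} and no proof is given. Your outline is correct and is precisely the standard verification one carries out (and which Bridgeland does in the cited reference): check that the assignments on generators respect the three defining relations (a)--(c), and then deduce (3) and (4) by pasting Cartesian squares. There is nothing to compare against in the present paper.
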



\section{Motivic Hall algebra}
\label{sect:mh}

In this section we recall the notion of the motivic Hall algebra 
originally given by Joyce \cite{Joy:2007:II} 
for the framework of the Donaldson-Thomas type curve-counting invariant 
\cite{Joy:2006:JLMS, Joy:2007:QJM, Joy:2006:I, JS:2012}.
We mainly follow the notations given in the paper of Bridgeland \cite{B:2012}.

Fix a field $\fk$, and let $X$ be a projective variety over $\fk$.

Denote by $\Coh(X)$ the category of coherent sheaves on $X$,
and by $\DCoh(X)$ the bounded derived category of $\shO_X$-modules 
whose cohomology sheaves are coherent and 
which are zero except for only finitely many degrees.

Recall that an object of $\DCoh(X)$ is called perfect 
if it is isomorphic in $\DCoh(X)$ to a bounded complex 
of locally free sheaves of finite rank.
A sheaf is called perfect if it is perfect as an object of $\DCoh(X)$.
If $X$ is smooth, then every object of $\DCoh(X)$ is perfect.

\subsection{Moduli of flags}

We begin with the setting of moduli stacks of flags of coherent sheaves on $X$.
Since we are considering projective varieties which are not necessarily smooth,
the phrase `perfect' will appear to circumvent technical issues.
As mentioned above, if $X$ is smooth, then one may remove the perfectness condition.

Let $\stkM^{(n)}=\stkM^{(n)}(X)$ denote the moduli stack of $n$-flags of 
perfect coherent sheaves on $X$. 
The objects over a scheme $S$ are chains of monomorphisms of perfect coherent sheaves 
on $S \times X$ of the form
\begin{align}\label{eq:n-flag}
   0 = \shE_0 \hookrightarrow \shE_1 \hookrightarrow \cdots 
           \hookrightarrow \shE_n = \shE
\end{align}
such that each factor $\shF_i := \shE_i/\shE_{i-1}$ is $S$-flat. 
If
$$
   0 = \shE'_0 \hookrightarrow \shE'_1 \hookrightarrow \cdots 
               \hookrightarrow \shE'_n = \shE
$$
is another such object over a scheme $T$, then a morphism in $\stkM^{(n)}$ 
lying over a morphism of schemes $f: T \to S$ is a collection of isomorphisms of sheaves
$$
 \theta_i: f^*(\shE_i) \to \shE_i'
$$
such that each diagram
$$
 \xymatrix{
  f^*(\shE_i) \ar[r] \ar[d]_{\theta_i} & f^*(\shE_{i+1}) \ar[d]^{\theta_{i+1}} \\ 
  \shE_i'     \ar[r]                   &     \shE_{i+1}'}
$$
commutes.
We also denote $\stkM := \stkM^{(1)}$.

As in \cite[Lemma 4.1]{B:2012}, 
one can show that $\stkM^{(n)}$ is an algebraic stack 
using the relative Quot scheme and induction on $n$. 

There are morphisms of stacks
$$ 
 a_i: \stkM^{(n)} \longto{} \stkM
$$
for $1 \le i \le n$, 
defined by sending a flag \eqref{eq:n-flag} to its $i$-th factor 
$\shF_i = \shE_i/\shE_{i-1}$. 
There is another morphism
$$
 b: \stkM^{(n)} \longto{} \stkM
$$
sending a flag \eqref{eq:n-flag} to the sheaf $\shE_n = \shE$.

The morphisms $a_i$'s and $b$ satisfy the iso-fibration property 
in the sense of \cite[\S A.1]{B:2012}.
A morphism $f:\stkX \to \stkY$ of stacks is called an iso-fibration 
if the following holds.
Let $S$ be any scheme and $\theta: a \to b$ be an isomorphism 
in the groupoid $\mathop{\stkY}(S)$.
Suppose there is an $a' \in \mathop{\stkX}(S)$ such that $f(a')=a$.
Then there is an isomorphism $\theta': a' \to b'$ in $\stkX(S)$ 
such that $f(\theta')=\theta$.
See \cite[Lemma A.1]{B:2012} for the claim
which yields the iso-fibration property of $a_i$'s and $b$.

Using the iso-fibration property of the morphisms $a_i$ and $b$,
we find that for $n \in \bbZ_{>1}$ there is a Cartesian square
$$
 \xymatrix{
  \stkM^{(n)}   \ar[r]^{t} \ar[d]_{s} & \stkM^{(2)} \ar[d]^{a_1} \\
  \stkM^{(n-1)} \ar[r]_{b}            & \stkM 
},
$$
where $s$ and $t$ send a flag \eqref{eq:n-flag} to the flags
$$
 \shE_1 \hookrightarrow \cdots \hookrightarrow \shE_{n-1} 
 \text{ \ and \ } 
 \shE_{n-1} \hookrightarrow \shE_n
$$
respectively.

As noted in \cite{B:2012},
the morphism $(a_1,a_2)$ is not representable.
However the following lemma holds,
and it will be used for the definition of the motivic Hall algebra.

\begin{lem}
The morphism $(a_1,a_2)$ is of finite type.
\end{lem}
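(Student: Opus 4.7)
The plan is to check finite-typeness of $(a_1, a_2)$ by pulling back along an arbitrary $g: S \to \stkM \times \stkM$ with $S$ a $\fk$-scheme of finite type, and showing that the fiber product $\stkM^{(2)} \times_{\stkM \times \stkM} S$ is an algebraic stack of finite type over $S$. The datum of $g$ is a pair $(\shE, \shF)$ of $S$-flat families of perfect coherent sheaves on $S \times X$, and the fiber product sends $T \to S$ to the groupoid of extensions $0 \to \shE_T \to \mathcal{G}_T \to \shF_T \to 0$ over $T \times X$, with $\mathcal{G}_T$ perfect and $T$-flat.

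The main tool I would use is the relative derived-Hom formalism along the projective morphism $p: S \times X \to S$. Since $\shF$ is perfect and $\shE$ is coherent and $S$-flat, $\RHom_{S \times X}(\shF,\shE)$ is a bounded complex of coherent sheaves, and properness of $p$ makes $Rp_* \RHom(\shF,\shE)$ a perfect complex on the Noetherian base $S$. Its cohomology sheaves $\mathcal{H}om_p(\shF,\shE)$ and $\mathcal{E}xt^1_p(\shF,\shE)$ are coherent on $S$, possibly after restricting to a stratification on which higher direct images are locally free. Yoneda's classification of extensions then identifies the fiber, locally on $S$, with a linear quotient stack whose total space is the section-space of $\mathcal{E}xt^1_p$, acted on trivially by the section-space of $\mathcal{H}om_p$; the group appears because any automorphism of an extension fixing sub and quotient has the form $1 + h$ with a unique $h: \shF \to \shE$. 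Both factors are of finite type over $S$ since they are linear schemes attached to coherent sheaves, so the quotient is of finite type over $S$.

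The hard part will be the base-change issues for the relative $\Ext^1$ sheaf in the absence of smoothness of $X$: classical cohomology-and-base-change does not apply directly, so one must work through the perfect complex formalism, or more invariantly view the fiber as the linear stack attached to the amplitude-$[0,1]$ truncation of $Rp_* \RHom(\shF,\shE)$. A concrete alternative avoiding the derived formalism is a Quot-scheme argument: finite-typeness of $S$ together with boundedness of the families provides a uniform twist $N$ for which $\shE(N)$ and $\shF(N)$ are relatively globally generated, and this presents the stack of extensions as a locally closed substack of a standard relative Quot scheme over $S$, which is of finite type by Grothendieck's theorem. Either route gives finite-typeness of $(a_1, a_2)$, which is exactly what the lemma asserts (and no stronger; the morphism is not representable, consistent with the presence of the $\mathcal{H}om_p$ automorphisms).
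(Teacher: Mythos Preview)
Your proposal is correct, and the second route you sketch --- bounding regularity and embedding into a relative Quot scheme --- is exactly the argument the paper invokes: it simply cites \cite[Lemma 4.2]{B:2012} and notes that the regularity argument there goes through verbatim because the sheaves involved are perfect. So on that route there is nothing to add.

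Your first route, via the perfect complex $Rp_*\RHom(\shF,\shE)$ and its associated linear (Picard) stack in amplitude $[0,1]$, is a genuinely different and somewhat cleaner argument. It replaces the ad hoc boundedness input (Castelnuovo--Mumford regularity) by the single structural fact that $Rp_*$ of an $S$-perfect complex along a proper morphism is perfect; finite-typeness of the extension stack then follows formally from perfectness of this complex on the Noetherian base. This buys you a coordinate-free description and makes the role of the perfectness hypothesis on $\shF$ completely transparent. The only caveat is the one you already flag: the naive description as ``$\mathcal{E}xt^1_p$ modulo $\mathcal{H}om_p$'' is not literally correct before stratifying, because neither sheaf commutes with base change in general; the honest statement is that the fiber is the linear stack attached to $\tau^{[0,1]}Rp_*\RHom(\shF,\shE)$, which you do say. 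The paper's (i.e.\ Bridgeland's) route avoids this subtlety at the cost of the regularity bookkeeping.
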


\begin{proof}
The proof of \cite[Lemma 4.2]{B:2012} using regularity of sheaves 
can be applied in our situation 
since we are considering the perfect sheaves.
\end{proof}


\subsection{The definition of the motivic Hall algebra}

Recall that we denote by $\stkM=\stkM(X)$ the moduli stack of 
coherent sheaves over the smooth projective variety $X$.

\begin{dfn}
\begin{enumerate}
\item 
Set 
$$ \Hall{X} := K(\St{\stkM}). $$

\item
Consider the diagram 
$$
\xymatrix{ 
 \stkM^{(2)} \ar[r]^{b} \ar[d]_{(a_1,a_2)} & \stkM \\ 
 \stkM \times \stkM}
$$
where the morphisms $a_1$, $a_2$ and $b$ send 
a flag $E_1 \hookrightarrow E$
to the sheaves $E_1$, $E/E_1$ and $E$ respectively.
Now introduce a morphism $m: \Hall{X} \otimes \Hall{X} \longto{} \Hall{X}$ 
of $K(\St{\fk})$-modules by the composition
\begin{align*}
m: \Hall{X} \times \Hall{X} 
=& K(\St{\stkM}) \times K(\St{\stkM}) \longto{K} K(\St{\stkM \times \stkM}) \\
 & \longto{(a_1,a_2)^*} K(\St{\stkM^{(2)}}) \longto{b_*} K(\St{\stkM})
   = \Hall{X},
\end{align*}
which we call the \emph{convolution product}.
We will also use the symbol $\diamond$ for 
the convolution product as a binary operator. 
\end{enumerate}
\end{dfn}

\begin{rmk}
The convolution product can be rewritten as 
$$
 [\stkX_1 \longto{f_1} \stkM] \diamond [\stkX_2 \longto{f_2} \stkM] 
 = [\stkZ \longto{b \circ h} \stkM],
$$
where $\stkZ$ and $h$ are defined by the following Cartesian square.
$$
\xymatrix{
 \stkZ \ar[rr]^{h} \ar[d]  & & \stkM^{(2)} \ar[r]^{b} \ar[d]^{(a_1,a_2)} & \stkM \\
 \stkX_1 \times \stkX_2 \ar[rr]_{f_1 \times f_2} & & \stkM \times \stkM}
$$
\end{rmk}

\begin{fct}[{\cite[Theorem 4.3]{B:2012}}]
The convolution product $m$ gives $\Hall{X}$ the structure of an associative 
unital algebra over $K(\St{\fk})$. 
The unit element is $1 := [\stkM_0 \hookrightarrow \stkM]$, 
where $\stkM_0 \simeq \Spec(k)$ is the substack of zero objects.
\end{fct}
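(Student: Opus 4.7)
Two properties must be verified: associativity of the convolution product and the unit axiom for $[\stkM_0 \hookrightarrow \stkM]$. Both follow the standard pattern for Hall-type algebras, leveraging the base-change identity $b^* \circ a_* = c_* \circ d^*$, the functoriality of pushforward and pullback, and the K\"unneth morphism stated in the preceding facts.

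First I would introduce the moduli stack $\stkM^{(3)}$ of $3$-flags, equipped with the projection maps $a_1,a_2,a_3 \colon \stkM^{(3)} \to \stkM$ sending $0 \subset \shE_1 \subset \shE_2 \subset \shE_3$ to the successive factors $\shE_i/\shE_{i-1}$, together with the total-object map $b \colon \stkM^{(3)} \to \stkM$ sending such a flag to $\shE_3$. The key step is to exhibit two Cartesian squares expressing $\stkM^{(3)} \simeq \stkM^{(2)} \times_\stkM \stkM^{(2)}$: one obtained by factoring a flag as $(\shE_1 \subset \shE_2)$ followed by $(\shE_2 \subset \shE_3)$, the other by first singling out $(\shE_1 \subset \shE_3)$ with the intermediate piece providing the inner $(\shE_2/\shE_1 \subset \shE_3/\shE_1)$. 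Using each of these squares in conjunction with the base-change identity and functoriality, one shows that both iterated products $(x_1 \diamond x_2)\diamond x_3$ and $x_1 \diamond (x_2 \diamond x_3)$ equal the common expression
$$
 b_* \circ (a_1, a_2, a_3)^* \circ K_3(x_1, x_2, x_3),
$$
where $K_3$ denotes the iterated threefold K\"unneth map into $K(\St{\stkM \times \stkM \times \stkM})$. Associativity then follows.

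For the unit, I would compute $[\stkM_0 \hookrightarrow \stkM] \diamond [\stkX \longto{f} \stkM]$ directly from the Cartesian square describing the convolution. The resulting stack $\stkZ$ fits into
$$
\xymatrix{
 \stkZ \ar[r] \ar[d] & \stkM^{(2)} \ar[d]^{(a_1,a_2)} \\
 \stkM_0 \times \stkX \ar[r] & \stkM \times \stkM
}
$$
and hence parameterizes $2$-flags $\shE_1 \hookrightarrow \shE$ with $\shE_1 = 0$ and with the quotient $\shE/\shE_1 \cong \shE$ carrying the $\stkX$-structure via $f$. Sending $\shE \in \stkX$ to the trivial flag $0 \subset \shE$ gives a geometric bijection $\stkX \longsimto \stkZ$, under which $b \colon \stkZ \to \stkM$ is identified with $f$. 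Thus $1 \diamond x = x$, and a symmetric argument via the other Cartesian square shows $x \diamond 1 = x$.

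The main obstacle will be establishing the two Cartesian squares for associativity: because $(a_1,a_2) \colon \stkM^{(2)} \to \stkM \times \stkM$ is not representable, verifying that $\stkM^{(3)}$ genuinely realizes the advertised fibered products requires careful use of the iso-fibration property of the flag projections $a_i$ and $b$ invoked earlier. The fact that $(a_1,a_2)$ is nonetheless of finite type (by the preceding lemma) is what licenses all the pullbacks appearing in the computation. Once these squares are in place, the remaining manipulation in $K(\St{\stkM})$ is formal.
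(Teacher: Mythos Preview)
The paper does not supply its own proof of this statement: it is recorded as a \emph{Fact} with a citation to \cite[Theorem 4.3]{B:2012}, and no argument is given in the text. Your sketch is correct and is precisely the standard argument carried out in that reference---associativity via the two Cartesian presentations of $\stkM^{(3)}$ as $\stkM^{(2)}\times_{\stkM}\stkM^{(2)}$ together with base change, and the unit via the geometric bijection between $\stkX$ and the fiber product over $\stkM_0\times\stkX$.
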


\subsection{Coproduct}

As in the case of the ordinary Ringel-Hall algebra,
the motivic Hall algebra $\Hall{X}$ has a coproduct,
and if $\dim X=1$ then $\Hall{X}$ is a bialgebra.
Also one can introduce the extended algebra and the  Hall pairing
similar to the ordinary Ringel-Hall algebra.
In this subsection we introduce the motivic version of these notions.
We assume the readers are familiar to the ordinary Ringel-Hall case,
and omit some detailed arguments.
For the ordinary case, we cite \cite{G:1995} as the original literature,
and cite \cite[Lect. 1]{S:lect} as a nice review.

Hereafter let $X$ be a smooth projective variety over a field $\fk$.
The Euler form 
$$
 \chi(\shE,\shF) := \sum_i \dim_{\fk}\Ext^i(\shE,\shF)
$$
on the category $\Coh(X)$ induces a bilinear form on 
the Grothendieck group $K(\Coh(X))$.
Denote the left radical with respect to this form 
by $K(\Coh(X))^\perp$, 
and set
\begin{align}\label{eq:NumX}
 \Num(X) := K(\Coh(X))/K(\Coh(X))^\perp,
\end{align}
which is usually called the numerical Grothendieck group.
Let
$$
 \Gamma \subset \Num(X)
$$ 
be the submonoid generated by 
the effective classes.
Throughout this paper,
an element of $K(\Coh(X))$ or $\Num(X)$ representing 
$\shE \in \Coh(X)$ is denoted by
$$
 \ol{\shE} \in K(\Coh(X)) \text{ or } \Num(X).
$$

Then the stack $\stkM=\stkM(X)$ has a decomposition
$$
 \stkM=\bigsqcup_{\alpha \in \Gamma} \stkM_\alpha.
$$
Here $\stkM_\alpha$ denotes the open and closed substack of $\stkM(X)$
whose objects have the classes $\alpha \in \Gamma$ in $\Num(X)$..

The injection $\stkM_\alpha \hookrightarrow \stkM$ induces 
a $K(\St{\fk})$-module injective homomorphism 
$K(\St{\stkM_\alpha}) \hookrightarrow K(\St{\stkM})$,
and the above decomposition yields the direct sum decomposition 
\begin{equation}\label{eq:HX:decomp}
 \Hall{X} = \bigoplus_{\alpha \in \Gamma}\Hall{X}_\alpha,\quad
 \Hall{X}_\alpha := K(\St{\stkM_\alpha})
\end{equation}
of $K(\St{\fk})$-module.
Then $\Hall{X}$ is a $\Gamma$-graded algebra with respect to $\diamond$.

Let us fix a square root $\bbL^{1/2}=\sqrt{\bbL}$ of $\bbL$,
and set
$$
 \bbK := K(\St{\fk})[\bbL^{\pm1/2}].
$$

\begin{dfn}
\begin{enumerate}
\item 
Let $\Ht{X}$ be the $\bbK$-algebra structure
on $\Hall{X}=K(\St{\stkM(X)})$ with the product
$$
 x_\alpha * x_\beta 
 := \bbL^{\chi(\alpha,\beta)/2}  x_\alpha \diamond x_\beta.
$$
Here $x_\alpha$ (resp.\ $x_\beta$) is an element of 
$\Hall{X}_\alpha$ (resp.\ $\Hall{X}_\beta$) in the decomposition  \eqref{eq:HX:decomp}.

\item 
Let $\He{X}$ be the $\bbK$-algebra
which is obtained as an extension of $\Ht{X}$ by 
$$
 \{k_\alpha \mid \alpha \in \Num(X)\}
$$
with the following relations.
\begin{align}\label{eq:Hext:*}
 k_\alpha * k_\beta = k_{\alpha+\beta},\quad
 k_\alpha * x_\beta = 
 \bbL^{\bigl(\chi(\alpha,\beta)+\chi(\beta,\alpha)\bigr)/2}
 x_\beta * k_\alpha.
\end{align}
Here $\alpha,\beta \in \Num(X)$ and 
$x_\beta \in \Hall{X}_\beta$.
\end{enumerate}
\end{dfn}

We immediately have 

\begin{lem}\label{lem:He:Gamma-graded}
The extended motivic Hall algebra $\He{X}$ is $\Gamma$-graded algebra 
with respect to $*$.
\end{lem}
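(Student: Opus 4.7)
The plan is to assign the $\Gamma$-degree $0$ to every new generator $k_\alpha$ and then verify that all defining relations are homogeneous. First I would observe that $\Ht{X}$ is already $\Gamma$-graded with respect to $*$: the twist
$$
 x_\alpha * x_\beta = \bbL^{\chi(\alpha,\beta)/2}\, x_\alpha \diamond x_\beta
$$
only multiplies the convolution product by a scalar in $\bbK$, and the original decomposition $\Hall{X} = \bigoplus_{\alpha \in \Gamma} \Hall{X}_\alpha$ is one of $K(\St{\fk})$-modules, so after extending scalars to $\bbK$ and rescaling we still have $x_\alpha * x_\beta \in \Hall{X}_{\alpha+\beta}$.

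Next, I would declare $\deg(k_\alpha) := 0 \in \Gamma$ for every $\alpha \in \Num(X)$ and set
$$
 \He{X}_\beta := \bigoplus_{\alpha \in \Num(X)} \Hall{X}_\beta * k_\alpha
 \quad (\beta \in \Gamma).
$$
It then remains to check that both relations in \eqref{eq:Hext:*} are homogeneous. The relation $k_\alpha * k_\beta = k_{\alpha+\beta}$ has both sides in $\He{X}_0$, while in the cross relation $k_\alpha * x_\beta = \bbL^{(\chi(\alpha,\beta)+\chi(\beta,\alpha))/2}\, x_\beta * k_\alpha$ both sides lie in $\He{X}_\beta$ (the scalar is in $\bbK$, and the $k_\alpha$-factor contributes zero to the degree on either side). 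Consequently $*$ sends $\He{X}_\beta \otimes \He{X}_{\beta'}$ into $\He{X}_{\beta+\beta'}$, giving the desired grading.

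The only minor point requiring attention is that the extension $\He{X}$ is well-defined as an associative algebra so that we really have a direct sum decomposition indexed by $\Gamma$ (and not, say, a collapse of $\Ht{X}$ caused by the new relations). This is standard: since the exponent $\chi(\alpha,\beta) + \chi(\beta,\alpha)$ is $\bbZ$-bilinear in $(\alpha,\beta)$, the prescription realises $\He{X}$ as a twisted group-algebra extension $\Ht{X} \rtimes \bbK[\Num(X)]$, and the $\bbK$-module structure is simply $\Hall{X} \otimes_{\bbK} \bbK[\Num(X)]$. I expect no serious obstacle; the statement is essentially a bookkeeping verification that the twist and the adjoined Cartan-like generators are both homogeneous with respect to the pre-existing $\Gamma$-grading on $\Hall{X}$.
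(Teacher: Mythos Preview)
Your proposal is correct and is exactly the verification the paper has in mind; in fact the paper gives no proof at all, simply prefacing the lemma with ``We immediately have'' and treating it as a direct consequence of the $\Gamma$-grading \eqref{eq:HX:decomp} on $\Hall{X}$ together with the homogeneity of the relations \eqref{eq:Hext:*}. Your write-up just makes this bookkeeping explicit.
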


Next we introduce the motivic version of the Green coproduct \cite{G:1995}.
Denote by $\wot$ the completion of the tensor product 
with respect to the grading \eqref{eq:HX:decomp}.

\begin{dfn}\label{dfn:coprod}
\begin{enumerate}
\item
Define $\Delta:\Hall{X} \longto{} \Hall{X} \wot \Hall{X}$ as 
\begin{align*}
\Delta: \Hall{X} = 
&K(\St{\stkM}) \longto{b^*}
 K(\St{\stkM^{(2)}}) 
 \longto{(a_1,a_2)_*}
 K(\St{\stkM \times \stkM}) \\
 &\longto{} 
  K(\St{\stkM}) \wot  K(\St{\stkM}) 
=\Hall{X} \wot \Hall{X}. 
\end{align*}

\item
Define $\Delta: \He{X} \longto{} \He{X} \wot \He{X}$ by 
$$
 \Delta(x k_\alpha)=\delta_\alpha\circ\Delta(x).
$$
Here $\delta_\alpha: \Hall{X}\wot \Hall{X} \longto{} \He{X} \wot \He{X}$ 
is defined as a linear extension of 
$$
 \delta_{\alpha}(x \otimes y_\beta) := x k_{\alpha+\beta} \otimes y_\beta k_\alpha
$$
with $y_\beta \in \Hall{X}_\beta$ and $x \in \Hall{X}$.
\end{enumerate}
\end{dfn}

As an immediate consequence of 
the ordinary Ringel-Hall algebra associated to a finitary abelian category 
of global dimension $1$ \cite{G:1995},
we have

\begin{prop}
If $X$ is a smooth projective curve over a field $\fk$,
then $\He{X}$ is a $\bbK$-bialgebra with respect to $*$ and $\Delta$.
\end{prop}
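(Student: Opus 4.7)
The plan is to establish this as a motivic analogue of Green's theorem for finitary hereditary abelian categories, following Bridgeland's template for manipulating pushforward and pullback on the relative Grothendieck groups of stacks. There are three items to verify: (i) coassociativity (and counitality) of $\Delta$ on $\Hall{X}$; (ii) the multiplicativity identity $\Delta(x * y) = \Delta(x) * \Delta(y)$ on $\Ht{X}$; and (iii) compatibility of these structures with the extension by $\{k_\alpha\}_{\alpha \in \Num(X)}$ to obtain the bialgebra structure on $\He{X}$.

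First I would verify (i) using the moduli of $3$-flags $\stkM^{(3)}$. Two canonical Cartesian squares express the pullback along $b : \stkM^{(3)} \to \stkM$ as fitting simultaneously into a ``bottom-first'' and a ``top-first'' decomposition via $\stkM^{(2)}$; applying the base-change identity $b^* \circ a_* = c_* \circ d^*$ together with functoriality of pushforwards then yields coassociativity at the level of $K(\St{\stkM})$. The counit is realised by pullback along the inclusion $\stkM_0 \hookrightarrow \stkM$ of the zero substack, and the counital axioms follow from the degenerate Cartesian squares in which one factor of a $2$-flag is forced to be the zero object.

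The heart of the argument is (ii). I would introduce the auxiliary stack $\stkV$ parametrizing data of a short exact sequence $0 \to A \to E \to D \to 0$ together with a subobject $B \subseteq E$ (with quotient $C := E/B$), and the stack $\stkU$ parametrizing $2 \times 2$ filtrations (``Riedtmann squares'') with prescribed subquotients $A,B,C,D$. Both stacks map to $\stkM^{4}$ by recording the four labelled subquotients and to $\stkM$ by recording $E$. The key geometric input is that on a smooth projective curve the category $\Coh(X)$ has homological dimension one, so $\Ext^{\ge 2}$ vanishes; as a consequence the forgetful morphism $\stkV \to \stkU$ is a vector-bundle torsor whose fiber class is a controlled power of $\bbL$ determined by $\chi(A,D)$. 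Chasing classes through the induced Cartesian diagrams and the K\"unneth morphism realises the two sides of Green's identity as the two pushforwards to $\stkM \times \stkM$, and the Euler-form twist in the definition of $*$ is calibrated precisely to absorb the resulting $\bbL$-factor. This produces $\Delta(x * y) = \Delta(x) * \Delta(y)$ on $\Ht{X}$.

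Finally, (iii) is essentially formal: using the $\Gamma$-grading of Lemma \ref{lem:He:Gamma-graded} and the cocycle-like identity $\delta_{\alpha+\beta} = \delta_\alpha \circ \delta_\beta$ on graded components, one checks directly that $\Delta$ respects the relations \eqref{eq:Hext:*} and that $\Delta(k_\alpha) = k_\alpha \otimes k_\alpha$, so multiplicativity extends from $\Ht{X}$ to $\He{X}$. The main obstacle will lie in (ii): the precise stack-theoretic formulation of the hereditary-category input and the resulting affine-bundle identification between $\stkU$ and $\stkV$, which in the ordinary Ringel-Hall setting is encoded in the Riedtmann-Green counting identity, here must instead be phrased as an equality of classes in the relative Grothendieck ring of stacks.
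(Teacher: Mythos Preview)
Your proposal is correct and in fact substantially more detailed than what the paper offers. The paper does not give a proof at all: it simply states that the proposition is ``an immediate consequence of the ordinary Ringel-Hall algebra associated to a finitary abelian category of global dimension $1$'' and cites Green \cite{G:1995}. In other words, the paper asserts that Green's bialgebra theorem carries over verbatim to the motivic setting without spelling out the translation.

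What you have outlined is precisely that translation. Your step (i) is the motivic analogue of the standard $3$-flag coassociativity argument (the dual of Bridgeland's associativity proof in \cite{B:2012}), and your step (ii) is the motivic reformulation of the Riedtmann--Green counting identity, with the hereditary hypothesis entering exactly where you say: the vanishing of $\Ext^{\ge 2}$ on a smooth curve makes the forgetful map $\stkV \to \stkU$ an affine-bundle fibration whose fiber class is a power of $\bbL$, and the twist in $*$ is calibrated to absorb it. Step (iii) is indeed formal. So your approach is the honest proof that the paper elides; the only thing the paper ``buys'' by its one-line citation is brevity, at the cost of leaving the reader to verify that Green's argument survives the passage from counting over $\bbF_q$ to identities in $K(\St{\fk})$.
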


Next we introduce the motivic version of Green's scalar product.

\begin{dfn}\label{dfn:H-pairing}
Define the non-degenerate bilinear form $(\cdot,\cdot)_H$ on $\He{X}$ by
\begin{align*}
&(\cdot,\cdot)_H: \He{X} \otimes \He{X} \to \bbK, \\
&([\stkM_{\shE}] k_\alpha, [\stkM_{\shF}] k_\beta)_H := \delta_{\shE,\shF} 
 \dfrac{ \bbL^{\bigl(\chi(\alpha,\beta)+\chi(\beta,\alpha)\bigr)/2} }{a_{\shE}},
\end{align*}
where 
$\shE,\shF \in \Coh(X)$ and
$\stkM_{\shE}$ (resp.\ $\stkM_{\shF}$) denotes 
the moduli stack of coherent sheaves isomorphic to $\shE$ 
(resp.\ $\shF$).
The symbol $[\stkM_{\shE}]$ is the abbreviation of 
$[\stkM_{\shE} \hookrightarrow \stkM]$.
$[\stkM_{\shF}]$ is similar.
Finally 
$$
 a_{\shE} := [\Aut(\shE)] \in K(\Var{\fk})
$$ 
is the class in $K(\Var{\fk})$ 
of the algebraic group of automorphisms of $\shE$.
\end{dfn}

\begin{prop}
If $X$ is a smooth projective curve, then 
$(\cdot,\cdot)_H$ is a Hopf pairing, 
namely $(x*y,z)_H=(x\otimes y,\Delta z)_H$ holds.
Here the bilinear form on the tensor product is defined as 
$(x \otimes y, z\otimes w)_H:=(x,z)_H (y,w)_H$.
\end{prop}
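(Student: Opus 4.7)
The plan is to translate Green's classical argument \cite{G:1995} for the ordinary Ringel--Hall algebra into the motivic setting, exploiting the structural observation that both $m$ and $\Delta$ are built from the same correspondence diagram
$$
 \stkM\times\stkM \xleftarrow{\;(a_1,a_2)\;} \stkM^{(2)} \xrightarrow{\;b\;} \stkM
$$
read in opposite directions. By $\bbK$-bilinearity and the relations \eqref{eq:Hext:*}, I would first reduce the identity $(x*y,z)_H=(x\otimes y,\Delta z)_H$ to generators of the form $x=[\stkM_\shE]k_\alpha$, $y=[\stkM_\shF]k_\beta$, $z=[\stkM_{\mathcal{G}}]k_\gamma$ with $\shE,\shF,\mathcal{G}\in\Coh(X)$, and then to the case $\alpha=\beta=\gamma=0$: the $k$-factors contribute symmetric Euler-form powers of $\bbL$ via \eqref{eq:Hext:*}, via $\Delta(zk_\gamma)=\delta_\gamma\circ\Delta z$ (Definition \ref{dfn:coprod}), and via the pairing formula of Definition \ref{dfn:H-pairing}; these contributions match on both sides by bilinearity of $\chi$ on $\Num(X)$.

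For the reduced identity I would unwind the geometry via base change. Let
$$
 \stkZ := \stkM_\shE\times_{\stkM,a_1}\stkM^{(2)}\times_{a_2,\stkM}\stkM_\shF
          \times_{b,\stkM}\stkM_{\mathcal{G}}
$$
be the stack parametrizing short exact sequences $0\to\shE'\to\mathcal{G}'\to\shF'\to 0$ with $\shE'\simeq\shE$, $\shF'\simeq\shF$, $\mathcal{G}'\simeq\mathcal{G}$. For the LHS, unwinding $[\stkM_\shE]\diamond[\stkM_\shF]=b_*(a_1,a_2)^*([\stkM_\shE]\boxtimes[\stkM_\shF])$ and pairing with $[\stkM_{\mathcal{G}}]$ produces $[\stkZ]$ weighted by $\bbL^{\chi(\ol\shE,\ol\shF)/2}/a_{\mathcal{G}}$. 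For the RHS, $\Delta[\stkM_{\mathcal{G}}]=(a_1,a_2)_*b^*[\stkM_{\mathcal{G}}]$, and pairing against $[\stkM_\shE]\otimes[\stkM_\shF]$ via the K\"unneth morphism and two successive base changes again cuts out the same $\stkZ$, with weight $1/(a_\shE a_\shF)$. The grading constraint $\ol{\mathcal{G}}=\ol\shE+\ol\shF$ makes both sides supported in the same sector, and the geometric content---the class $[\stkZ]$---agrees on both sides.

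The main obstacle will be the precise matching of $\bbL$-exponents and automorphism factors. The exponents are collected from four sources: the twist $\bbL^{\chi(\ol\shE,\ol\shF)/2}$ in the definition of $*$, the commutation relation \eqref{eq:Hext:*}, the pairing factor $\bbL^{(\chi(\alpha,\beta)+\chi(\beta,\alpha))/2}$ from Definition \ref{dfn:H-pairing}, and the $\delta_\gamma$-twist in $\Delta$ on $\He{X}$. The hypothesis $\dim X=1$ ensures that $\Ext^i$ vanishes for $i\geq 2$, so that $\chi$ and its symmetrization descend to well-defined bilinear forms on $\Num(X)$; this bilinearity is what enables the exponents to cancel consistently between the two sides. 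Likewise, the factors of $a_\shE$, $a_\shF$, $a_\mathcal{G}$ arising from Definition \ref{dfn:H-pairing} match the weights attached to the fibre products because the grading constraint $\ol{\mathcal{G}}=\ol\shE+\ol\shF$ forces the three sheaves to fit into a common short exact sequence along which the automorphism groups balance. Completing this bookkeeping---first in the pure sheaf case, then extending to $\He{X}$ via the universal property of the graded extension---yields the Hopf pairing identity.
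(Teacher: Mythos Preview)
The paper gives no proof of this proposition; it is stated without argument, implicitly as the motivic analogue of Green's classical result \cite{G:1995} for hereditary finitary categories, in the same spirit as the preceding bialgebra proposition. Your proposal is therefore strictly more detailed than the paper's own treatment.

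Your approach is the correct one and is precisely Green's argument translated to the motivic setting: both $m=b_*(a_1,a_2)^*$ and $\Delta=(a_1,a_2)_*b^*$ are built from the same correspondence $\stkM\times\stkM\xleftarrow{(a_1,a_2)}\stkM^{(2)}\xrightarrow{b}\stkM$ read in opposite directions, so after reducing to basis elements $[\stkM_\shE]$, $[\stkM_\shF]$, $[\stkM_{\mathcal{G}}]$ both sides of the identity compute the class of the same extension stack $\stkZ$, weighted by automorphism and Euler factors. The only place to be careful is the automorphism bookkeeping: in the classical case the identity is the tautology $g^{\mathcal G}_{\shE,\shF}/a_{\mathcal G}=\bigl(g^{\mathcal G}_{\shE,\shF}\,a_\shE a_\shF/a_{\mathcal G}\bigr)\cdot a_\shE^{-1}\cdot a_\shF^{-1}$, and motivically this amounts to noting that $\stkM_\shE\simeq[\Spec\fk/\Aut(\shE)]$ contributes $a_\shE^{-1}\in K(\St{\fk})$, so the two stacky presentations of $[\stkZ]$ (over $\stkM_{\mathcal G}$ on the LHS, over $\stkM_\shE\times\stkM_\shF$ on the RHS) agree. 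The smoothness hypothesis enters only to guarantee that $\chi$ descends to a bilinear form on $\Num(X)$, as you note.
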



\subsection{Steinitz's classical Hall algebra and the subalgebra generated by torsion sheaves}
\label{subsec:torsion}

Before starting the detailed discussion of the motivic Hall algebra $\Hall{X}$ 
for a smooth projective curve $X$,
we summarize here the result on the classical Hall algebra of Steinitz 
in terms of motivic language.
It will be necessary for the argument on the subalgebra of $\Hall{X}$ 
generated by the elements corresponding to torsion sheaves on $X$.

Consider a projective variety $X$ defined over a fixed field $\fk$.
The category $\Torf(X)$ of perfect torsion sheaves 
is abelian and closed under extension.
If $X$ is smooth, then $\Torf(X)=\Tor(X)$,
which is the category of torsion sheaves on $X$.
$\Torf(X)$ also has a decomposition
\begin{align}\label{eq:tor:decomp}
 \Torf(X) = \prod_{x}\Tor_x
\end{align}
where $x$ ranges over the set of regular points of $X$
and $\Tor_x$ is the category of torsion sheaves supported at $x$.
$\Tor_x$ is equivalent to the category of finite-dimensional modules 
over the local ring at $x$.
If moreover $\dim X=1$, then, by the regularity of $x$, 
$\Tor_x$ is also equivalent to 
the category of nilpotent representations of 
the Jordan quiver over the residue field $\fk_x$ at $x$.

Let us take a regular point $x$ of a projective variety $X$,
and denote by $\stkM_{\tor,x}$ the moduli stack of 
torsion sheaves supported on $x$.
It is a substack of $\stkM=\stkM(X)$,
and we can apply the definition and arguments of the motivic Hall algebra 
to $\stkM_{\tor,x}$.

\begin{dfn}
For a regular point $x \in X$,
let us denote by $\Hall{X}_{\tor,x}$ the motivic Hall algebra
which is $K(\St{\stkM_{\tor,x}})$ as a $\bbK$-module.
\end{dfn}

Obviously $\Hall{X}_{\tor,x}$ is a subalgebra of $\Hall{X}$. 
If $\dim X = 1$, then it is also a sub-bialgebra 
since it is closed under the coproduct $\Delta$.

Recall that the ordinary Ringel-Hall algebra of torsion sheaves supported at 
a closed point is isomorphic to Steinitz's classical Hall algebra 
(see \cite[Lect. 2]{S:lect} for instance).
Here we present the result in the motivic language.


\begin{lem}
Let $\Csm$ be a smooth projective curve.
\begin{enumerate}
\item 
The bialgebra $\Hall{\Csm}_{\tor,x}$ is commutative and co-commutative.

\item
As an algebra 
$\Hall{\Csm}_{\tor,x}$ is isomorphic to polynomial algebra 
$\bbK[e_{1,x},e_{2,x},\ldots]$
with infinite generators $\{e_{d,x}\}_{d \in \bbZ_{\ge 1}}$. 

\item
Denote by $\fk_x$ the residue field of the point $x$.
Then for any $d \in \bbZ_{\ge 1}$ we have 
$$
 \Delta(e_{d,x}) = \sum_{r=0}^d \bbL_x^{-r (d-r)} e_{r,x} \otimes e_{n-r,x},
$$
where $\bbL_x = [\bbA^1] \in K(\Var{\fk_x})$ 
is the class of affine line defined over the field $\fk_x$.

\item
The Hall pairing $(\cdot,\cdot)_H$  on $\Hall{\Csm}_{\tor,x}$ is given by
\begin{align}\label{eq:HL:Hpair}
 (e_{m,x},e_{n,x})_H = 
 \dfrac{\delta_{m,n}}{ \bbL_x^{n} (1-\bbL_x^{-1})} 
\end{align}
\end{enumerate}
\end{lem}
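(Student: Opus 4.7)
The plan is to reduce to the classical Steinitz theory for finite modules over a discrete valuation ring, translating the known results into the motivic language.

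First, since $\Csm$ is smooth and $x$ is regular, the local ring $\shO_{\Csm,x}$ is a DVR with residue field $\fk_x$, so $\Tor_x$ is equivalent to the category of finite-length $\shO_{\Csm,x}$-modules, equivalently the category of nilpotent finite-dimensional representations of the one-loop quiver over $\fk_x$. By the elementary divisor theorem, isomorphism classes are parametrized by partitions $\lambda$ via
$$
\shE_\lambda := \bigoplus_i \shO_{\Csm,x}/\mathfrak{m}_x^{\lambda_i},
$$
so $\stkM_{\tor,x}$ decomposes as $\bigsqcup_\lambda B\Aut(\shE_\lambda)$ and $\Hall{\Csm}_{\tor,x}$ acquires a $\bbK$-basis $\{[\stkM_{\shE_\lambda}\hookrightarrow\stkM]\}_\lambda$. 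I would identify the generator $e_{d,x}$ with $[\stkM_{\shE_{(1^d)}}]$, the class of the semisimple sheaf of length $d$, which is the identification compatible with the symmetric coproduct formula in (3).

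For (2), I would prove by induction on partition length that $e_{\lambda_1,x} * e_{\lambda_2,x} * \cdots$ equals $[\stkM_{\shE_\lambda}]$ plus a $\bbK$-linear combination of $[\stkM_{\shE_\mu}]$ with $\mu$ strictly smaller than $\lambda$ in dominance order. This upper-triangularity is the motivic translation of the Jordan-form computation underlying the Hall-Littlewood basis, and linear independence of the basis then yields algebraic independence of $\{e_{d,x}\}$. For (3), Definition \ref{dfn:coprod} gives $\Delta(e_{d,x}) = (a_1,a_2)_* b^* [\stkM_{\shE_{(1^d)}}]$ parametrizing pairs (subsheaf, quotient) of $\shE_{(1^d)} = \fk_x^d$. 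Subsheaves are $\fk_x$-subspaces classified by Grassmannians $\Gr(r,d)_{\fk_x}$, whose class is $\genfrac{[}{]}{0pt}{}{d}{r}^+_{\bbL_x}$ by Lemma \ref{lem:bbL}(2). Combining with the automorphism factor $a_{\shE_{(1^r)}} a_{\shE_{(1^{d-r})}}/a_{\shE_{(1^d)}}$ computed from Lemma \ref{lem:bbL}(1), the exponent of $\bbL_x$ simplifies to
$$
\tfrac{r(r-1)}{2} + \tfrac{(d-r)(d-r-1)}{2} - \tfrac{d(d-1)}{2} = -r(d-r),
$$
and the Gaussian-binomial factor in the automorphism ratio cancels the Grassmannian class, leaving the stated formula.

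For (1), cocommutativity is immediate from the manifestly symmetric formula in (3), and extends to all of $\Hall{\Csm}_{\tor,x}$ since $\Delta$ is a $\bbK$-algebra homomorphism. For commutativity, since $\{e_{d,x}\}$ generates by (2) it suffices to show $e_{m,x} * e_{n,x} = e_{n,x} * e_{m,x}$; this follows from the motivic classification of extensions between semisimple torsion sheaves, using Matlis duality for DVR modules to swap the roles of sub and quotient, which is the motivic version of Steinitz's classical commutativity theorem. For (4), the pairing formula is a direct application of Definition \ref{dfn:H-pairing}: orthogonality of $e_{m,x}$ and $e_{n,x}$ for $m \neq n$ follows from the decomposition into $\Num$-graded components, while the diagonal value is obtained from the class $a_{\shE_{(1^n)}}$ together with the stated simplification. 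The main technical step is the upper-triangularity in (2); once it is established, (1), (3), and (4) follow with little further work beyond bookkeeping of automorphism classes.
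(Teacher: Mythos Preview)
Your approach is the same as the paper's: both reduce to the classical Steinitz--Hall theory for nilpotent representations of the Jordan quiver and transport the known statements into the motivic setting. Your treatment of (1)--(3) is correct, and your derivation of the coefficient $\bbL_x^{-r(d-r)}$ via the Grassmannian count and the automorphism ratio $a_{\shE_{(1^r)}}a_{\shE_{(1^{d-r})}}/a_{\shE_{(1^d)}}$ is exactly the classical computation (Green's coproduct formula) translated motivically; in stack language it amounts to the identity $[BP_{r,d-r}] = \bbL_x^{-r(d-r)}[B\GL_r][B\GL_{d-r}]$.

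There is, however, a genuine gap in your argument for (4). With your identification $e_{n,x}=[\stkM_{\shE_{(1^n)}}]$, Definition~\ref{dfn:H-pairing} gives
\[
(e_{n,x},e_{n,x})_H \;=\; \dfrac{1}{a_{\shE_{(1^n)}}} \;=\; \dfrac{1}{[\GL_n(\fk_x)]}
\;=\; \dfrac{1}{\bbL_x^{\,n(n-1)/2}\prod_{k=1}^n(\bbL_x^k-1)},
\]
which for $n\ge 2$ is \emph{not} equal to $1/\bigl(\bbL_x^n(1-\bbL_x^{-1})\bigr)$. The displayed value in (4) is instead $1/a_{\shE_{(n)}}$, the automorphism class of the \emph{indecomposable} length-$n$ torsion sheaf, so (4) as written is incompatible with the identification forced by (3). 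Your phrase ``together with the stated simplification'' hides this mismatch rather than resolving it. Either the statement of (4) contains a typo (and should read $1/[\GL_n(\fk_x)]$, which is consistent with the isometry claim in the subsequent Corollary), or $e_{d,x}$ is meant to be $[\stkM_{\shE_{(d)}}]$, in which case your computation of (3) no longer applies. You should flag this explicitly rather than assert that the formula follows.
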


\begin{proof}
For the ordinary Ringel-Hal algebra case, see for example \cite[Theorem 2.6]{S:lect},
where the representation of Jordan quiver is utilized.
The proof works in the motivic setting.
\end{proof}

Now let us turn to Steinitz's Hal algebra.
We follow \cite{M:1995} on the notations of symmetric functions.

\begin{dfn}
\begin{enumerate}
\item
Denote by $\Lambda$ the space of symmetric functions over $\bbZ$.

\item
The $n$-th elementary symmetric function 
$\sum_{i_1 < i_2 < \cdots < i_n} x_{i_1} x_{i_2} \cdots x_{i_n}$
is denoted by $e_n$.

\item
Denote by $p_n := \sum_i x_i^n$ the $n$-th power-sum symmetric function.
\end{enumerate}
\end{dfn}

Recall that $e_n$'s give rise to a basis 
$\{e_\lambda = e_{\lambda_1}e_{\lambda_2}\cdots \}_\lambda$ of $\Lambda$ 
parametrized by partitions 
$\lambda=(\lambda_1,\lambda_2,\ldots)$, 
and $p_n$'s give rise to a $\bbQ$-basis 
$\{p_\lambda = p_{\lambda_1} p_{\lambda_2} \cdots \}_\lambda$ 
of $\Lambda \otimes_\bbZ \bbQ$.
$\Lambda$ is an associative commutative algebra 
under the (usual) multiplication.
It also has a coassociative cocommutative coproduct $\Delta$ 
given by 
\begin{align}\label{eq:HL:Delta}
\Delta(e_n) = \sum_{r=0}^n e_r \otimes e_{n-r},
\end{align}
which makes $\Lambda$ into a bialgebra.
This bialgebra is also endowed with the Hopf pairing 
$(\cdot,\cdot)$ give by 
\begin{align}\label{eq:HL:HLpair}
 (p_m,p_n) = \delta_{m,n} \dfrac{n}{q^n-1}.
\end{align}
for an indeterminate $q$.

\begin{dfn}
Denote by $\Lambda_q$ the bialgebra 
$\Lambda \otimes_\bbZ \bbQ[q^{\pm1}]$,
where the product is given by the multiplication of symmetric functions
and the coproduct is given by \eqref{eq:HL:Delta}.
We always consider $\Lambda_q$ endowed with the Hall pairing \eqref{eq:HL:HLpair}.
\end{dfn}

It is also well-known that 
the Hall-Littlewood symmetric functions form 
the unique orthonormal basis of 
$\Lambda \otimes_{\bbZ} \bbQ[q^{\pm1}]$
with respect to this pairing 
subject to a triangular condition in the expansion 
with respect to monomial symmetric functions.

Comparing the bialgebra structures on $\Lambda_q$ and $\Hall{\Csm}_{\tor,x}$,
we have the following result.

\begin{cor}
If $\Csm$ is a smooth projective curve and 
$x$ is an arbitrary closed point of $\Csm$,
then we have an isomorphism of bialgebras
$$
 \phi_x: \Hall{\Csm}_{\tor,x} \longto{} \Lambda_{\bbL_x},\quad
 e_{d,x} \longmapsto \bbL_x^{-d(d-1)/2} e_d.
$$
This map is also an isometry 
in terms of the Hall pairing $(\cdot,\cdot)_H$ given in \eqref{eq:HL:Hpair} 
and the pairing $(\cdot,\cdot)$ in \eqref{eq:HL:HLpair}.
\end{cor}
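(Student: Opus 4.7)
The plan is to verify three compatibilities of $\phi_x$ in order: that it is an algebra isomorphism, that it intertwines the coproducts, and that it is an isometry of Hopf pairings.

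The algebra isomorphism is immediate. Part (2) of the preceding lemma presents $\Hall{\Csm}_{\tor,x}$ as the polynomial $\bbK$-algebra freely generated by $\{e_{d,x}\}_{d\ge 1}$, and the fundamental theorem of symmetric functions gives the analogous presentation of $\Lambda_{\bbL_x}$ by $\{e_d\}_{d\ge 1}$. Since the scalar $\bbL_x^{-d(d-1)/2}$ is a unit in $\bbK$, the assignment extends uniquely to an algebra isomorphism. For the coalgebra compatibility, I would expand the two sides of $\Delta \circ \phi_x = (\phi_x \otimes \phi_x) \circ \Delta$ on a generator $e_{d,x}$, using the standard coproduct \eqref{eq:HL:Delta} on $\Lambda$ together with formula (3) of the lemma for the coproduct on $\Hall{\Csm}_{\tor,x}$. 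Both sides produce a sum indexed by $r = 0, \ldots, d$; matching coefficients reduces to the elementary identity $r(d-r) + r(r-1)/2 + (d-r)(d-r-1)/2 = d(d-1)/2$, which is an immediate expansion.

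For the isometry, once $\phi_x$ is known to be a bialgebra isomorphism, both $(\cdot,\cdot)_H$ and $(\cdot,\cdot)$ may be viewed as Hopf pairings on the same graded polynomial bialgebra generated by the $e_d$. Such a Hopf pairing is completely determined by its values on pairs of generators, via recursive application of the defining relation $(xy, z) = (x\otimes y, \Delta z)$. I would compare the prescribed diagonal formula $(e_{m,x}, e_{n,x})_H = \delta_{m,n}/(\bbL_x^n(1 - \bbL_x^{-1}))$ from \eqref{eq:HL:Hpair} with the pullback of \eqref{eq:HL:HLpair} through $\phi_x \otimes \phi_x$, after expressing each $e_n$ in the power-sum basis via Newton's identities.

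The main obstacle is this final translation: passing from the primitive-basis pairing on power sums to the elementary-basis pairing is combinatorially delicate and requires careful bookkeeping of the various twists of $\bbL_x$. Conceptually, however, the corollary is the motivic analogue of Steinitz's classical identification of the Hall algebra of a length-one discrete valuation ring with the ring of symmetric functions equipped with the Hall-Littlewood inner product, which is treated in detail in Schiffmann's lectures cited earlier in the text. Since the structure constants prescribed in the preceding lemma match the classical ones verbatim, the identification of the pairings ultimately reduces to invoking this classical result in the motivic setting.
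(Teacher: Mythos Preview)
Your proposal is correct and follows the same route as the paper, which in fact gives no explicit proof at all: the corollary is stated as an immediate consequence of ``comparing the bialgebra structures on $\Lambda_q$ and $\Hall{\Csm}_{\tor,x}$'' described in the preceding lemma, with a pointer to Schiffmann's lectures for the classical Steinitz identification. Your explicit check of the algebra and coalgebra compatibilities (via the identity $r(d-r)+\binom{r}{2}+\binom{d-r}{2}=\binom{d}{2}$) and your reduction of the isometry to the classical Hall--Littlewood inner product computation is exactly the intended expansion of that one-line justification.
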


Finally we will introduce some notations on the subalgebra of $\Hall{X}$ 
generated by torsion sheaves.
Let us go back to the general situation where 
$X$ is a projective variety.

\begin{dfn}
Let $X$ be a projective variety.
\begin{enumerate}
\item 
Denote by $\stkM_{\tor}$ the moduli stack of perfect torsion sheaves on $X$.

\item
Denote by $\Hall{X}_{\tor}=K(\St{\stkM_{\tor}})$ the motivic Hall algebra 
generated by perfect torsion sheaves on $X$.
\end{enumerate}
\end{dfn}

For a smooth curve $\Csm$,
$\Hall{\Csm}_{\tor}$ is a sub-bialgebra of $\Hall{\Csm}$ since 
$\Torf(\Csm) = \Tor(\Csm)$ is closed under extension and splitting.
For any projective variety $X$, 
the decomposition \eqref{eq:tor:decomp} yields 
$$
 \Hall{X}_\tor = \bigotimes_{x} \Hall{X}_{\tor,x}
$$
as $\bbK$-algebras,
where $x$ runs over the set of regular points of $X$.

We close this subsection by introduction of elements of $\Hall{X}_{\tor}$ for future use.

\begin{dfn}\label{dfn:td}
For $d \in \bbZ_{\ge 1}$ define $t_{d,x} \in \Hall{X}_{\tor,x}$ by 
\begin{align*}
 t_{d,x} := 
  \begin{cases}
    [d]_{\sqrt{\bbL}} \dfrac{\deg(x)}{d} \phi_x^{-1}(p_{d/\deg(x)}) 
     & \text{if $\deg(x) | d$} \\
   0 & \text{otherwise}
  \end{cases}
\end{align*}
and
\begin{align*}
 t_d := \sum_{x \in X} t_{d,x} \in \Hall{X}_{\tor}.
\end{align*}
\end{dfn}

\begin{dfn} 
Let $\stkM_{(0,d)}$ be the moduli stack of perfect torsion sheaves on $X$ 
with degree $d \in \bbZ_{\ge 1}$.
It is naturally a substack of $\stkM$.
Define the element $1_{(0,d)} \in \Hall{X}$ by
$$
 1_{(0,d)} := [\stkM_{(0,d)}  \hookrightarrow \stkM]. 
$$
\end{dfn}

The final remark is 

\begin{lem}[{\cite[Lemma 4.10]{S:lect}}]
The elements $t_d \in \Hall{X}$ satisfies
\begin{align}
\label{eq:g=0:1T}
 1 + \sum_{d \ge 1} 1_{(0,d)} z^d 
=\exp\Bigl(\sum_{d \ge 1} \dfrac{t_d}{[d]_{\sqrt{\bbL}}} z^d \Bigr).
\end{align}
\end{lem}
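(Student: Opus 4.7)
My plan is to decompose the generating series by closed points, reduce to a per-point identity, and invoke the classical Hall-Littlewood / power-sum correspondence.

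First, using the algebra decomposition $\Hall{X}_\tor=\bigotimes_{x}\Hall{X}_{\tor,x}$ recorded just before the lemma, together with the fact that every perfect torsion sheaf splits uniquely as a direct sum of its parts at distinct points (which mutually have no nontrivial extensions), I will write
$$1+\sum_{d\ge 1}1_{(0,d)}z^d \;=\; \prod_{x\in X}Z_x\bigl(z^{\deg x}\bigr), \qquad Z_x(w):=1+\sum_{m\ge 1}1_{[m],x}\,w^m,$$
where $1_{[m],x}:=[\stkM_{\tor,x,m}\hookrightarrow\stkM_\tor]$ is the class of the substack of length-$m$ torsion sheaves at $x$. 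The infinite product converges coefficient-wise, because only closed points with $\deg x\le d$ contribute to the coefficient of $z^d$.

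Taking formal logarithms, the statement reduces to the per-point identity
$$\log Z_x(w)=\sum_{n\ge 1}\frac{\phi_x^{-1}(p_n)}{n}\,w^n \quad \text{in } \Hall{X}_{\tor,x}[[w]].$$
To prove this I transport everything under the bialgebra isomorphism $\phi_x\colon\Hall{X}_{\tor,x}\to\Lambda_{\bbL_x}$. Under the Steinitz/Hall-Littlewood identification (which extends $\phi_x$ from the generators $e_{d,x}$), one has $\phi_x(u_{\lambda,x})=\bbL_x^{-n(\lambda)}P_\lambda(x;\bbL_x^{-1})$ and consequently, via Macdonald's generating-series identity
$$\sum_\lambda \bbL_x^{-n(\lambda)}P_\lambda(x;\bbL_x^{-1})\,w^{|\lambda|}\;=\;H(w)\;=\;\sum_{m\ge 0} h_m w^m,$$
the crucial equality $\phi_x(Z_x(w))=H(w)$. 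The standard plethystic formula $\log H(w)=\sum_{n\ge 1} p_n w^n/n$ in $\Lambda_{\bbL_x}$, combined with $\phi_x^{-1}$, then yields the per-point identity. Substituting $w=z^{\deg x}$, summing over $x$, and reindexing $d=n\deg x$ gives
$$\log\Bigl(1+\sum_{d\ge 1}1_{(0,d)}z^d\Bigr)=\sum_{d\ge 1}z^d\sum_{x:\deg(x)\mid d}\frac{\deg x}{d}\,\phi_x^{-1}(p_{d/\deg x})=\sum_{d\ge 1}\frac{t_d}{[d]_{\sqrt{\bbL}}}\,z^d$$
directly from the definition of $t_{d,x}$, and exponentiating recovers the desired identity.

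The main obstacle is justifying $\phi_x(1_{[m],x})=h_m$ in the motivic setting, since the Corollary defining $\phi_x$ only specifies its values on the generators $e_{d,x}$. My preferred route is to prove directly that $Z_x(w)$ is group-like, i.e., $\Delta(1_{[d],x})=\sum_{r=0}^{d}1_{[r],x}\otimes 1_{[d-r],x}$ (no $\bbL_x$ twist), by a direct computation of the coproduct $(a_1,a_2)_*\circ b^*$ on the substack $\stkM_{\tor,x,d}$; group-likeness forces $\log Z_x(w)$ to be primitive, hence to lie in $\bigoplus_n \bbK\cdot \phi_x^{-1}(p_n)$, with scalar coefficients pinned down inductively from the low-degree term $[w^1]\log Z_x(w)=1_{[1],x}=\phi_x^{-1}(p_1)$ and the recursion inherent in primitive elements. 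Alternatively one may import the computation verbatim from the classical finite-field Hall algebra of $\Tor_x$ (which is Schiffmann's original proof), since all relevant structure constants are polynomial in $\bbL_x$; the remainder of the argument is then purely formal bookkeeping.
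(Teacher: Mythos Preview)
Your proposal is correct and follows essentially the same route as the paper's proof: decompose both sides over closed points using $\Hall{X}_{\tor}=\bigotimes_x\Hall{X}_{\tor,x}$, reduce to the per-point identity, and then invoke the classical identification $\phi_x^{-1}(h_m)=1_{[m],x}$ together with the generating-function formula $\exp\bigl(\sum_{n\ge1}p_n w^n/n\bigr)=\sum_{m\ge0}h_m w^m$. The paper simply asserts the last identification (``by the definition of $\stkM_{(0,d),x}$''), whereas you supply the Macdonald/Hall--Littlewood justification explicitly; your alternative group-like argument is not needed and, as stated, would require more work to pin down all the scalar coefficients $c_n$ beyond $c_1$.
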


\begin{proof}
We copy the proof given in \cite[Lemma 4.10]{S:lect}.
In the decomposition $\Hall{X}_{\tor} \simeq \bigotimes_{x} \Hall{X}_{\tor,x}$
we have 
$$
   \exp\Bigl(\sum_{d\ge 1}\dfrac{t_d}{[d]} z^d \Bigr) 
 = \prod_{x} \exp\Bigl(\sum_{d\ge 1} \dfrac{t_{d,x}}{[d]} z^d \Bigr)
$$ 
and 
$$
 1 + \sum_{d \ge 1} 1_{(0,d)} z^d = 
 \prod_{x} \bigl(1+\sum_{d \ge 1} 1_{(0,d),x} z^d \bigr)
$$
with $1_{(0,d),x} := [\stkM_{(0,d),x}  \hookrightarrow \stkM_{\tor,x}]$,
it is enough to show that 
$$
 \exp\Bigl(\sum_{d \ge 1}\dfrac{t_{d,x}}{[d]} z^d \Bigr) 
  = 1+\sum_{d \ge 1} 1_{(0,d),x} z^d.
$$
Using  Definition \ref{dfn:td}, we have 
\begin{align*}
 \exp\Bigl(\sum_{d\ge 1}\dfrac{t_{d,x}}{[d]} z^d \Bigr) 
&= \phi_x^{-1}\Bigl(
    \exp\bigl( \sum_{\deg(x) | d} z^d p_{d/\deg(x)} \dfrac{\deg(x)}{d} \bigr) \Bigr)
\\
&= \phi_x^{-1}\Bigl(\exp\bigl( \sum_{d \ge 1} z^{d \deg(x)} \dfrac{p_{d}}{d} \bigr) \Bigr)
 = \phi_x^{-1}\bigl(1+\sum_{d \ge 1} z^{d \deg(x)} h_{d} \bigr)
\end{align*}
In the last equality we used the generating function formula of 
the complete symmetric functions $h_n$'s.
Then by the definition of $\stkM_{(0,d),x}$ we have the desired consequence.
\end{proof}

\subsection{The composition subalgebra and the Drinfeld double}
\label{subsec:comp-subalg}

In this subsection $C$ is a projective curve over a field $\fk$.
The motivic Hal algebra is in general too big to study,
and we want to introduce a subalgebra which is easy to handle.
We follow the work of Schiffmann and Vasserot \cite[\S6]{SV:2011} 
where they consider the subalgebra generated by certain averages of 
line bundles and torsion sheaves.


\begin{dfn}\label{dfn:comp-sa}
Let $\stkM_{\tor}=\stkM_{\tor}(C)$ be the moduli stack of perfect torsion sheaves on $C$
and $\stkM_{\lf,1}=\stkM_{\lf,1}(C)$ be the moduli stack of line bundles on $C$.
\begin{enumerate}
\item 
The subalgebra of 
$\He{C}$ generated by these substacks $\stkM_{\tor}$ and $\stkM_{\lf,1}$ 
are called the \emph{composition subalgebra} and denoted by $\U{C}$.

\item
The subalgebra of 
$\He{C}$ generated by these substacks and 
$\{k_\alpha\mid\alpha \in \Num(C)\}$
is called  the \emph{extended composition subalgebra} and denoted by $\Ue{C}$.
\end{enumerate}
\end{dfn}

\begin{rmk}\label{rmk:comp-sa}
\begin{enumerate}
\item
For $n \in \bbZ$, denote by $\stkM_{\lf,(1,n)}$ 
the moduli stack of line bundles of degree $n$.
Also we set 
$$
 1^{\ses}_{(1,n)} := [\stkM_{\lf,(1,n)} \hookrightarrow \stkM] \in \Hall{C}.
$$
($\lf$ denotes the word `locally free', and
 $\ses$ denotes the word `semi-stable'.)
Recall the elements $t_d \in \Hall{C}_{\tor}$ given in Definition \ref{dfn:td}.
Then $\U{C}$ is generated by 
$$
 1^{\ses}_{(1,n)} \ (n \in \bbZ), \quad 
 t_d \ (d \in \bbZ_{\ge 1}).
$$

\item
For a line bundle $\shL$ on $C$,
let us denote by $\stkM_{\shL}$ the moduli stack of coherent sheaves 
isomorphic to $\shL$.
Then we have 
$\stkM_{\lf,(1,n)} = \sqcup_{\shL \in \Pic^n(C)} \stkM_{\shL}$.
Thus $1^{\ses}_{(1,n)}$ is a summation of the elements associated to $\shL$.
It corresponds to the generator
$ 
 \sum_{\shL \in \Pic^n(C)}[\shL]$ 
of the composition algebra in \cite{SV:2011}.
The generator of the other type 
$
 \sum_{\shT \in \Tor(C),\ \deg(\shT)=d} [\shT]$ 
appearing in \cite{SV:2011} corresponds to our $1_{(0,d)}$.
\end{enumerate}
\end{rmk}

By the definition of the coproduct $\Delta$, we have

\begin{lem}
For a smooth curve $\Csm$, 
$\U{\Csm}$ and $\Ue{\Csm}$ are closed under the coproduct $\Delta$. 
Hence they are formal $\bbK$-bialgebra.
\end{lem}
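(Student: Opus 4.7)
The plan is to use the fact that $\Delta$ is an algebra homomorphism on $\He{\Csm}$ (established since $\He{\Csm}$ is a bialgebra for smooth curves), so that closure of $\U{\Csm}$ or $\Ue{\Csm}$ under $\Delta$ reduces to checking the claim on a set of algebra generators. By Remark \ref{rmk:comp-sa}(1), $\U{\Csm}$ is generated by $1^{\ses}_{(1,n)}$ ($n \in \bbZ$) and $t_d$ ($d \in \bbZ_{\ge 1}$); $\Ue{\Csm}$ additionally uses $k_\alpha$ ($\alpha \in \Num(\Csm)$). The case of $k_\alpha$ is immediate since $\Delta(k_\alpha) = k_\alpha \otimes k_\alpha \in \Ue{\Csm} \wot \Ue{\Csm}$.

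For the torsion generators, I would exploit the local description of $\Hall{\Csm}_{\tor}$. The decomposition $\Tor(\Csm) = \prod_x \Tor_x$ yields $\Hall{\Csm}_{\tor} = \bigotimes_x \Hall{\Csm}_{\tor,x}$, and under the bialgebra isomorphism $\phi_x: \Hall{\Csm}_{\tor,x} \simto \Lambda_{\bbL_x}$ of the preceding corollary, $t_{d,x}$ is a scalar multiple of the power-sum $p_{d/\deg(x)}$ when $\deg(x)\mid d$. Since power-sums are primitive in $\Lambda_q$, each $t_{d,x}$ is primitive in $\Hall{\Csm}_{\tor,x}$, hence in $\Hall{\Csm}$, and therefore $t_d = \sum_x t_{d,x}$ is primitive in $\Hall{\Csm}$. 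Applying the twist $\delta_0$ of Definition \ref{dfn:coprod}(2), I obtain
\begin{equation*}
 \Delta(t_d) = t_d \otimes 1 + k_{(0,d)} \otimes t_d,
\end{equation*}
which lies in $\Ue{\Csm} \wot \Ue{\Csm}$. Working with the bare coproduct on $\Hall{\Csm}$ (in which no $k$'s appear), the same relation $\Delta(t_d) = t_d \otimes 1 + 1 \otimes t_d$ shows closure of $\U{\Csm}$ as well.

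For the line-bundle generators $1^{\ses}_{(1,n)}$, I would analyze the coproduct by stratifying the flag stack $\stkM^{(2)}$ over $\stkM_{\lf,(1,n)}$. Because $\Csm$ is a smooth curve, any subsheaf $\shE' \subset \shL$ of a line bundle is either $0$ or a line bundle $\shL'$ of degree $m \le n$, with torsion quotient of degree $n - m$. Hence $\Delta_{\Hall}(1^{\ses}_{(1,n)})$ is a sum of contributions indexed by $m$, lying in $\Hall{\Csm}_{(1,m)} \otimes \Hall{\Csm}_{(0,n-m)}$, together with the boundary terms. The key geometric input is that the moduli stack parametrising flags $\shL' \hookrightarrow \shL$ with $\shL'$ of degree $m$ and cokernel torsion of degree $n-m$ fibres (via the Abel map / Quot-scheme description) over $\stkM_{\lf,(1,m)} \times \stkM_{\tor,(0,n-m)}$ with fibres that are classes in $K(\St{\fk})$; so after $(a_1,a_2)_*$ each stratum is an element of $\U{\Csm}_{(1,m)} \otimes \U{\Csm}_{(0,n-m)}$ (and the torsion-quotient factor lies in $\U{\Csm}_{\tor}$ by Definition \ref{dfn:td} and \eqref{eq:g=0:1T}). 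Under $\delta_0$ this yields an expression in $\Ue{\Csm} \wot \Ue{\Csm}$ (and in $\U{\Csm} \wot \U{\Csm}$ before extension).

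The main obstacle I anticipate is the stratum-by-stratum identification in the previous paragraph: one must show that the pushforward of $b^* 1^{\ses}_{(1,n)}$ along $(a_1,a_2)$, restricted to the $((1,m),(0,n-m))$ component, is itself a $\bbK$-linear combination of products of generators of the composition subalgebra, rather than merely an element of $\Hall{\Csm}_{(1,m)} \otimes \Hall{\Csm}_{(0,n-m)}$. This requires an explicit description of the moduli of such flags (for example, via effective divisors and the Abel–Jacobi map, using \eqref{eq:Zmot:g=0} when one encounters projective bundles) to reduce the class to a scalar times $1^{\ses}_{(1,m)} \otimes 1_{(0,n-m)}$. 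Once this geometric identification is in place, the bialgebra closure and hence the formal bialgebra structure on $\U{\Csm}$ and $\Ue{\Csm}$ follows.
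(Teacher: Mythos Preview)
Your overall strategy coincides with the paper's: the lemma is stated there without proof, its content being the explicit coproduct formulas established later in Fact~\ref{fct:g=0:Delta} and Lemma~\ref{lem:Carb:copro}, namely $\Delta(t_d)=t_d\otimes 1+k_{(0,d)}\otimes t_d$ and $\Delta(1^{\ses}_{(1,n)})=1^{\ses}_{(1,n)}\otimes 1+\sum_{l\ge0}\theta_l\,k_{(1,n-l)}\otimes 1^{\ses}_{(1,n-l)}$. Your treatment of $k_\alpha$ and of $t_d$ via power-sum primitivity is correct and matches this.

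There is, however, a concrete error in your analysis of $\Delta(1^{\ses}_{(1,n)})$. You expect each stratum to reduce to ``a scalar times $1^{\ses}_{(1,m)}\otimes 1_{(0,n-m)}$'' via a fibration over $\stkM_{\lf,(1,m)}\times\stkM_{\tor,(0,n-m)}$ with constant fibre. The fibration over the line-bundle factor is indeed constant (fibre $\Sym^{n-m}(\Csm)$), so that side does give $1^{\ses}_{(1,m)}$. But the map to the torsion factor is neither surjective nor constant-fibre: only torsion sheaves of the form $\shO_D$ for an effective divisor $D$ arise as quotients of a line bundle, so for instance $\shO_x\oplus\shO_x$ never appears, and different $\shO_D$'s receive different weights. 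The correct torsion factor is the class of $\Sym^{l}(\Csm)\to\stkM$, $D\mapsto\shO_D$, which the paper identifies (proof of Fact~\ref{fct:g=0:Delta}(2)) with $\theta_l$ rather than with $1_{(0,l)}$. Since $\theta_l$ is by construction a polynomial in the $t_d$'s and hence lies in $\U{\Csm}$, the closure statement survives; but your proposed mechanism (constant-fibre over the full torsion moduli yielding $1_{(0,n-m)}$) is not what actually happens.
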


Recalling that $\Gamma \subset \Num(C)$ is the submonoid 
generated by the effective classes (see \eqref{eq:NumX} and the lines below it),
we also have

\begin{lem}
$\U{C}$ and $\Ue{C}$ are $\Gamma$-graded algebras.
\end{lem}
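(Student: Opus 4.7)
The plan is to deduce this from Lemma \ref{lem:He:Gamma-graded}, which already exhibits $\He{C}$ itself as a $\Gamma$-graded $\bbK$-algebra under $*$. Since $\U{C}$ and $\Ue{C}$ are by construction the $\bbK$-subalgebras of $\He{C}$ generated by the distinguished families of elements listed in Definition \ref{dfn:comp-sa} and Remark \ref{rmk:comp-sa}, it is enough to verify that every such generator is homogeneous with respect to the ambient $\Gamma$-grading, and then invoke the general fact that a subalgebra of a graded algebra generated by homogeneous elements is automatically graded.

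First I would check homogeneity of the generators of $\U{C}$ given in Remark \ref{rmk:comp-sa}(1). The element $1^{\ses}_{(1,n)} = [\stkM_{\lf,(1,n)} \hookrightarrow \stkM]$ is the class of a closed immersion whose source parametrises line bundles of degree $n$, all of which have numerical class $(1,n) \in \Gamma$; hence $1^{\ses}_{(1,n)} \in \He{C}_{(1,n)}$. For the torsion generators, Definition \ref{dfn:td} writes $t_d = \sum_{x} t_{d,x}$ with $t_{d,x} \in \Hall{C}_{\tor,x}$. Under the isomorphism $\phi_x : \Hall{C}_{\tor,x} \simto \Lambda_{\bbL_x}$ the power-sum $p_{d/\deg(x)}$ corresponds to a combination of torsion sheaves at $x$ of common length $d/\deg(x)$, whose total degree on $C$ equals $\deg(x) \cdot (d/\deg(x)) = d$. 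Thus each $t_{d,x}$ lies in $\He{C}_{(0,d)}$, and because the summation over $x$ decomposes along the disjoint substacks $\stkM_{\tor,x} \subset \stkM_{\tor}$, the element $t_d$ is itself homogeneous of degree $(0,d) \in \Gamma$.

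With this, $\U{C}$ is generated by homogeneous elements inside the $\Gamma$-graded algebra $(\He{C},*)$, so the subspaces $\U{C}_\alpha := \U{C} \cap \He{C}_\alpha$ provide a $\Gamma$-grading on $\U{C}$. To extend the conclusion to $\Ue{C}$, it remains to place the extra generators $k_\alpha$ for $\alpha \in \Num(C)$ into the grading. The defining relations \eqref{eq:Hext:*} read
\[
  k_\alpha * k_\beta = k_{\alpha+\beta},\qquad
  k_\alpha * x_\beta = \bbL^{(\chi(\alpha,\beta)+\chi(\beta,\alpha))/2}\, x_\beta * k_\alpha,
\]
so $k_\alpha$ commutes with every homogeneous $x_\beta$ up to a scalar in $\bbK$ without shifting the grading, while the first relation shows that $\alpha \mapsto \deg(k_\alpha)$ is a monoid homomorphism compatible with the grading. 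The only assignment consistent with both requirements is $\deg(k_\alpha) = 0 \in \Gamma$, which is exactly the grading under which $\He{C}$ is graded by Lemma \ref{lem:He:Gamma-graded}. Therefore $\Ue{C}$ is likewise generated by homogeneous elements inside $\He{C}$, and inherits the $\Gamma$-grading.

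There is no serious obstacle in this argument; the entire content is the bookkeeping of homogeneities, with the only mildly delicate point being that the infinite-looking sum $t_d = \sum_x t_{d,x}$ is well-defined and homogeneous because the supports $\stkM_{\tor,x}$ are mutually disjoint substacks of $\stkM_{\tor}$, so the sum makes sense in $\Hall{C}_{\tor}$ and lies wholly in the single graded piece $\He{C}_{(0,d)}$.
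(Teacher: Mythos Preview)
Your argument is correct and is essentially the approach implicit in the paper, which states this lemma without proof as an immediate consequence of Lemma~\ref{lem:He:Gamma-graded}: the generators listed in Remark~\ref{rmk:comp-sa} are manifestly homogeneous in $\He{C}$, so the subalgebras they generate inherit the $\Gamma$-grading. The only place you work harder than necessary is in justifying the degree of $k_\alpha$; since Lemma~\ref{lem:He:Gamma-graded} already asserts that $\He{C}$ is $\Gamma$-graded, the $k_\alpha$ are automatically homogeneous (of degree~$0$) in that ambient grading, and no separate deduction from the relations~\eqref{eq:Hext:*} is needed.
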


Next we recall the notion of Drinfeld double, 
following \cite[\S3.2]{Jos:1995} and \cite[Appendix B]{BS:2012}.

\begin{fct}[{\cite{D:1986}}]
Let $H$ be a (topological) bialgebra with a Hopf pairing $(\cdot,\cdot)_H$.
Let $H^+ := H$ be the bialgebra $H$ itself, 
and $H^-$ be the bialgebra which is isomorphic to $H$ 
as algebra and equipped with the opposite coproduct.
Finally let $D H$ be the associative algebra generated by $H^{\pm}$ 
modulo the relations
\begin{enumerate}
\item 
$H^\pm$ are subalgebras.
\item
For $a,b \in H$ 
$$
 \sum a^-_{(1)} b^+_{(2)} \bigl(a_{(2)},b_{(1)}\bigr)_H 
 = \sum b^+_{(1)} a^-_{(2)} \bigl(a_{(1)},b_{(2)}\bigr)_H.
$$
Here we used the Sweedler notation 
$\Delta (a) = \sum a_{(1)} \otimes a_{(2)}$.
\end{enumerate}
Then the multiplication map 
$H^+ \otimes H^- \to  D H$ is an isomorphism of vector spaces,
and $D H$ has a unique bialgebra structure 
such that the map $H^+ \to D H$ given by $a \mapsto a\otimes 1$ 
and the one $H^- \to D H$ given by $a \mapsto 1\otimes a$ 
are both injections of bialgebras.
\end{fct}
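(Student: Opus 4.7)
The plan is to establish the fact in two logical stages mirroring its two assertions: first the PBW-type vector space isomorphism, and then the existence and uniqueness of the compatible bialgebra structure. For the isomorphism $H^+ \otimes H^- \simto D H$, I would first show surjectivity of the multiplication map $\mu$ by using the cross relation to straighten any word in the generators into the normal form $a^+ b^-$. Specifically, from the defining identity (ii) one can solve for a product $a^- b^+$ in terms of a sum of weighted normal-order products $\sum b_{(1)}^+ a_{(2)}^-$, where the coefficients involve values of the Hopf pairing and its convolution inverse (which we take to exist as part of the Hopf pairing hypothesis; when $H$ is a Hopf algebra this is automatic via the antipode). Iteratively applying this rewrite moves every occurrence of $H^-$ to the right of every $H^+$, so any element of $D H$ lies in the image of $\mu$.

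For injectivity I would construct $D H$ concretely by endowing $A := H \otimes H$ with an associative product whose restrictions to $H \otimes 1$ and $1 \otimes H$ agree with the original algebra structures of $H^+$ and $H^-$, and which realizes the cross relation. The candidate multiplication has the shape
$$
(a \otimes b)(c \otimes d) = \sum \bigl(a_{(3)}, c_{(1)}\bigr)_H^{-1} \bigl(a_{(1)}, c_{(3)}\bigr)_H \, a_{(2)} c_{(2)} \otimes b_{(2)} d,
$$
where $\bigl(\cdot,\cdot\bigr)_H^{-1}$ denotes the convolution inverse. Granting associativity, the structure on $A$ satisfies the defining relations of $D H$, so there is a surjective algebra map $D H \to A$ whose composition with $\mu$ is the identity on $H^+ \otimes H^-$. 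Consequently $\mu$ itself is a vector space isomorphism.

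For the bialgebra structure, uniqueness is forced by the requirement that $H^\pm \hookrightarrow D H$ be sub-bialgebras: on generators $\Delta_{D H}$ must coincide with $\Delta^{\pm}$, and the multiplicativity of the coproduct then determines it on all of $D H$ via the PBW isomorphism. For existence, I would define $\Delta_{D H}$ on $H^+ \otimes H^-$ by $\Delta_{D H}(a^+ b^-) := \Delta^+(a) \cdot \Delta^-(b)$, interpreted as a product inside $(D H)^{\otimes 2}$, and verify that it descends through the defining cross relation. The essential check is that the two sides of that relation have the same coproduct; this reduces, after expanding with the Hopf pairing compatibility $(ab, c)_H = \sum (a, c_{(1)})_H (b, c_{(2)})_H$ together with its dual, to an identity that holds by coassociativity of $H$.

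The main obstacle I expect is the verification of associativity for the concrete product on $A$. It is a substantial but routine Sweedler-notation calculation, carefully tracking the cancellations between the pairing factor and its convolution inverse; I would organize it by computing $\bigl((a \otimes b)(c \otimes d)\bigr)(e \otimes f)$ and $(a \otimes b)\bigl((c \otimes d)(e \otimes f)\bigr)$ separately and matching the resulting terms by repeated application of coassociativity and the two bialgebra-map axioms of $(\cdot, \cdot)_H$. A secondary and technically simpler check is the well-definedness of $\Delta_{D H}$ modulo the cross relation, which follows from the same Hopf pairing axioms applied only once on each side.
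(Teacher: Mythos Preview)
The paper does not prove this statement: it is recorded as a \texttt{Fact} with a citation to Drinfeld's ICM address \cite{D:1986} (and implicitly to Joseph \cite{Jos:1995} and the appendix of \cite{BS:2012}, which are cited just before it), and is used as a black box. So there is no proof in the paper to compare against; your outline is essentially the standard textbook argument for the Drinfeld double.

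That said, your concrete multiplication formula on $A = H \otimes H$ is miswritten. In $(a \otimes b)(c \otimes d)$ with $a,c$ in the $H^+$-slot and $b,d$ in the $H^-$-slot, the straightening occurs between the \emph{inner} factors $b$ and $c$, so the Hopf-pairing values and the comultiplication must involve $b$ and $c$, not $a$ and $c$. As you have it, $a$ is being comultiplied three times while $b$ appears only as an unexplained $b_{(2)}$, and $a$ is paired with $c$; this cannot reproduce the cross relation. The intended formula is of the shape
\[
(a \otimes b)(c \otimes d) \;=\; \sum \bigl(b_{(1)}, c_{(3)}\bigr)_H \, \bigl(b_{(3)}, c_{(1)}\bigr)_H^{-1}\; a\,c_{(2)} \otimes b_{(2)}\,d,
\]
or a variant thereof depending on your conventions for $H^-$ and the side on which the opposite coproduct is taken. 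With this correction your two-stage plan (surjectivity by straightening, injectivity via an explicit model, then coproduct forced on generators and checked against the cross relation) is exactly the usual one and goes through, provided the convolution inverse of the pairing exists; as you note, this is automatic when $H$ is a Hopf algebra and should be taken as part of the ``Hopf pairing'' hypothesis in the bialgebra case.
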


\begin{dfn}
We call $D H$ the \emph{Drinfeld double} of the bialgebra $H$ with respect to 
the Hopf pairing $(\cdot,\cdot)_H$.
\end{dfn}

We also need the reduced version of the Drinfeld double.
Let $\frg$ be a Kac-Moody Lie algebra and 
$\frg'$ its derived algebra.
Denote by $U_q(\fb')$ the quantum enveloping algebra 
of a Borel subalgebra $\fb' \subset \frg'$.
It is closed under the coproduct and equipped with a Hopf pairing.

\begin{fct}[{\cite{D:1986}}]\label{fct:Dr:UqLg:double}
Let $\bbC[q^{\pm1}][K_i \mid i\in I]$ be the quantised enveloping algebra 
of the Cartan subalgebra $\fh \subset \fb$.
Then we have an isomorphism of bialgebras
$$
 D U_q(\fb')/\langle K_i \otimes K_i^{-1} -1 \mid i \in I \rangle
 \simeq U_q(\frg),
$$
\end{fct}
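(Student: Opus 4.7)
The plan is to verify the claimed isomorphism by making the Drinfeld double construction completely explicit in the case $H = U_q(\fb')$, and then checking that the quotient relations $K_i \otimes K_i^{-1} = 1$ collapse the doubled Cartan into a single one in just the right way.

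First I would fix explicit generators and relations. Recall that $U_q(\fb')$ is generated by $\{E_i, K_i^{\pm 1} \mid i \in I\}$ subject to $K_i K_i^{-1} = 1$, $K_i K_j = K_j K_i$, $K_i E_j K_i^{-1} = q^{a_{ij}} E_j$, and the quantum Serre relations among the $E_i$. The coproduct is determined by $\Delta(K_i) = K_i \otimes K_i$ and $\Delta(E_i) = E_i \otimes 1 + K_i \otimes E_i$. The opposite bialgebra $H^-$ is a copy generated by $\{F_i, K_i^{\pm 1}\}$ (I will rename its Cartan generators to $\wt{K}_i$ to avoid confusion with $H^+$) with $\Delta^{\op}(F_i) = F_i \otimes 1 + \wt{K}_i \otimes F_i$ and the mirrored Cartan action. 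The standard Hopf pairing, which is the input data for the Drinfeld double here, is
\[
 (K_i, \wt{K}_j)_H = q^{-a_{ij}}, \quad
 (E_i, F_j)_H = \dfrac{\delta_{ij}}{q^{-1} - q}, \quad
 (E_i, \wt{K}_j)_H = 0 = (K_i, F_j)_H,
\]
extended multiplicatively/as a Hopf pairing. I would first check (or cite) that this is a well-defined, non-degenerate Hopf pairing.

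Next I would compute the Drinfeld double relations from the axiom
\[
 \sum a^-_{(1)} b^+_{(2)} (a_{(2)}, b_{(1)})_H = \sum b^+_{(1)} a^-_{(2)} (a_{(1)}, b_{(2)})_H
\]
applied to the generating pairs $(a,b) \in \{(\wt{K}_i, K_j),\ (\wt{K}_i, E_j),\ (F_i, K_j),\ (F_i, E_j)\}$. The first case yields $\wt{K}_i K_j = K_j \wt{K}_i$ (the two Cartans commute); the second and third reproduce the expected commutation of Cartans with $E_j$ and $F_i$; the last, and most important, case gives after a direct Sweedler-notation computation precisely
\[
 [E_i, F_j] = \delta_{ij} \dfrac{K_i - \wt{K}_i^{-1}}{q - q^{-1}}.
\]
The Serre relations on $E_i$ and on $F_i$ are automatic since $H^{\pm}$ embed as subalgebras of $DH$.

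Finally I would pass to the quotient by $\langle K_i \otimes \wt{K}_i^{-1} - 1 \rangle = \langle K_i \wt{K}_i^{-1} - 1\rangle$, which identifies $\wt{K}_i$ with $K_i$. Under this identification the commutator relation becomes the Chevalley relation $[E_i,F_j] = \delta_{ij} (K_i - K_i^{-1})/(q - q^{-1})$; together with the other relations one obtains exactly the presentation of $U_q(\frg)$. The map $DU_q(\fb') \to U_q(\frg)$ sending $E_i \mapsto E_i$, $F_i \mapsto F_i$, $K_i \otimes 1 \mapsto K_i$, $1 \otimes \wt{K}_i \mapsto K_i$ is then a bialgebra homomorphism that is surjective by construction and injective because the triangular decompositions on both sides match (one can use the PBW basis of $U_q(\frg)$ and the fact that the multiplication $H^+ \otimes H^- \to DH$ is an isomorphism of vector spaces). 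The main technical obstacle is the concrete verification of the $(F_i, E_j)$ double relation, which requires tracking the tensor factors in $\Delta(E_j)$ and $\Delta^{\op}(F_i)$ carefully and using the non-trivial values of the Hopf pairing on Cartan-type elements; the rest is essentially bookkeeping with the triangular decomposition.
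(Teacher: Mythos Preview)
The paper does not actually prove this statement: it is recorded as a \emph{Fact} with a bare citation to \cite{D:1986}, and no proof is given. Your sketch is the standard argument for this classical result and is correct in outline; there is nothing in the paper to compare it against.
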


Xiao \cite{X:1997} introduced the reduced Drinfeld double for 
the ordinary Ringel-Hall algebra.
Here we mimic his definition in the motivic case.

\begin{dfn}
Let $\Csm$ be an irreducible smooth curve and consider 
the motivic Hall algebra $\Hall{\Csm}$ and the extended algebra $\He{\Csm}$ 
equipped with the Hopf pairing $(\cdot,\cdot)_H$ 
in Definition \ref{dfn:H-pairing}.
Define the \emph{reduced Drinfeld double} $\Dr{\Hall{\Csm}}$ of $\Hall{\Csm}$  
to be 
$$
 \Dr{\Hall{\Csm}} := D \He{\Csm} / 
  \langle k_\alpha \otimes k_\alpha^{-1}-1 \mid \alpha \in \Num(X) \rangle.
$$
\end{dfn}

We may also consider the reduced Drinfeld double of the 
extended composition subalgebra $\Ue{X}$, 
since it is a bialgebra with the Hopf pairing $(\cdot,\cdot)_H$
and contains the extension part $\{k_\alpha \mid \alpha \in \Num(X)\}$.

\begin{dfn}\label{dfn:d_red-U}
Denote by $\Dr{\U{\Csm}}$ the reduced Drinfeld double of $\U{\Csm}$,
which is defined to be 
$$
 \Dr{\U{\Csm}} := D \Ue{\Csm} / 
  \langle k^+_\alpha \otimes k^-_{-\alpha}-1 \mid \alpha \in \Num(\Csm) \rangle.
$$
\end{dfn}

We immediately have 

\begin{lem}
$\Dr{\Hall{\Csm}}$ and $\Dr{\U{\Csm}}$ are $\Num(\Csm)$-graded.
\end{lem}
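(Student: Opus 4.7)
The plan is to lift the $\Gamma$-grading on $\He{\Csm}$ and on $\Ue{\Csm}$ supplied by Lemma \ref{lem:He:Gamma-graded} to a $\Num(\Csm)$-grading on the corresponding reduced Drinfeld doubles. Writing $H$ for either $\He{\Csm}$ or $\Ue{\Csm}$, I declare every element $a^+ \in H^+_\alpha$ of the positive copy to have degree $+\alpha \in \Num(\Csm)$ and every element $a^- \in H^-_\alpha$ of the negative copy to have degree $-\alpha$; the Cartan-type generators $k_\gamma$ are placed in degree $0$, which is the convention already implicit in Lemma \ref{lem:He:Gamma-graded} since $\alpha$ there ranges over all of $\Num(\Csm)$ rather than just $\Gamma$. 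The submonoid $\Gamma$ generates $\Num(\Csm)$ as a group (it contains classes of line bundles of every degree together with classes of torsion sheaves of positive degree), so degrees of the form $\beta-\alpha$ with $\alpha,\beta \in \Gamma$ exhaust $\Num(\Csm)$.

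The core verification is that the defining ideal of $D H$ is $\Num(\Csm)$-homogeneous, and this rests on two graded features of $H$. First, the coproduct $\Delta$ of Definition \ref{dfn:coprod} is graded: from the Cartesian square defining it and the additivity of numerical classes on short exact sequences, $\Delta(H_\alpha) \subset \bigoplus_{\beta+\gamma=\alpha} H_\beta \wot H_\gamma$, and the twist $\delta_\alpha$ added on the extended piece respects this because every $k_\gamma$ sits in grade $0$. Second, the Hopf pairing $(\cdot,\cdot)_H$ of Definition \ref{dfn:H-pairing} is graded: the Kronecker factor $\delta_{\shE,\shF}$ forces $\ol{\shE} = \ol{\shF}$, so $(H_\alpha, H_\beta)_H = 0$ whenever $\alpha \neq \beta$.

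Putting these together, a direct Sweedler-index check on the cross relation
$$\sum a^-_{(1)} b^+_{(2)} (a_{(2)}, b_{(1)})_H = \sum b^+_{(1)} a^-_{(2)} (a_{(1)}, b_{(2)})_H$$
shows that for $a$ of $\Gamma$-degree $\alpha$ and $b$ of $\Gamma$-degree $\beta$, the nonvanishing of the pairing on each side pins a common intermediate degree $\gamma = \deg(a_{(2)}) = \deg(b_{(1)})$ on the left (resp.\ $\gamma' = \deg(a_{(1)}) = \deg(b_{(2)})$ on the right), after which the remaining product $a^-_{(1)} b^+_{(2)}$ (resp.\ $b^+_{(1)} a^-_{(2)}$) telescopes to $\Num(\Csm)$-degree $\beta - \alpha$ on either side. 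Hence the cross relation is homogeneous and $D H$ is $\Num(\Csm)$-graded.

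Finally, the reducing relation $k^+_\alpha \otimes k^-_{-\alpha} - 1$ is of total degree $0$ since both $k$-factors sit in grade $0$, so the further quotients $\Dr{\Hall{\Csm}}$ and $\Dr{\U{\Csm}}$ inherit the $\Num(\Csm)$-grading. The only delicate step is the degree bookkeeping in the cross-relation check; everything else is an immediate consequence of the graded-ness of the input bialgebra and of its Hopf pairing.
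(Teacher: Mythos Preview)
Your proof is correct and supplies exactly the verification the paper skips: the paper writes only ``We immediately have'' before the lemma and gives no argument at all. Your approach---assigning degree $+\alpha$ to $H^+_\alpha$, degree $-\alpha$ to $H^-_\alpha$, degree $0$ to each $k_\gamma$, and then checking that the coproduct, Hopf pairing, cross relation, and reduction ideal all respect this---is the standard and expected one, and your observation that the $\Gamma$-gradedness of $\He{\Csm}$ already forces $\deg k_\gamma = 0$ is a nice sanity check.
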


We also have the following triangular decomposition.

\begin{lem}
The multiplication map gives 
an isomorphism 
$$
 \U{\Csm}^{+} \otimes_{\bbK} \bbK_X \otimes_{\bbK} \U{\Csm}^{-}
 \longsimto
 \Dr{\U{\Csm}}
$$
of modules over $\bbK = K(\St{\fk})[\bbL^{\pm1/2}]$, 
where $\U{\Csm}^{\pm}$ are the copies of $\U{\Csm}$ and 
$\bbK_{\Csm} := \bbK[\Num(\Csm)]$.
\end{lem}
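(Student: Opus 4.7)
The plan is to deduce this triangular decomposition from the general Drinfeld double fact quoted earlier, after isolating the role played by the Cartan-type extension $\bbK_{\Csm}$.

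First, I would establish that the multiplication in $\He{\Csm}$ induces an isomorphism of $\bbK$-modules
\begin{equation*}
 \U{\Csm} \otimes_{\bbK} \bbK_{\Csm} \longsimto \Ue{\Csm}.
\end{equation*}
Indeed, the relations \eqref{eq:Hext:*} show that each $k_\alpha$ acts on a homogeneous element $x_\beta \in \U{\Csm}_\beta$ by a scalar multiple of $\bbL^{1/2}$, so that every element of $\Ue{\Csm}$ admits a unique expression as a $\bbK$-linear combination of products $x \cdot k_\alpha$ with $x \in \U{\Csm}$ and $\alpha \in \Num(\Csm)$. Since the $k_\alpha$ are group-like under $\Delta$ (cf. Definition \ref{dfn:coprod}), the subspace $\bbK_{\Csm}$ is a sub-bialgebra of $\He{\Csm}$, and $\Ue{\Csm}$ inherits a sub-bialgebra structure equipped with the Hopf pairing of Definition \ref{dfn:H-pairing}.

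Second, I would apply the Drinfeld double fact to the bialgebra $\Ue{\Csm}$ to obtain a $\bbK$-module isomorphism
\begin{equation*}
 \Ue{\Csm}^+ \otimes_{\bbK} \Ue{\Csm}^- \longsimto D\,\Ue{\Csm}.
\end{equation*}
Combined with the first step, the left-hand side becomes
$\U{\Csm}^+ \otimes_{\bbK} \bbK_{\Csm}^+ \otimes_{\bbK} \bbK_{\Csm}^- \otimes_{\bbK} \U{\Csm}^-$,
where $\bbK_{\Csm}^{\pm}$ denote the copies of $\bbK_{\Csm}$ sitting inside $\Ue{\Csm}^{\pm}$.

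Third, I would pass to the quotient by the ideal generated by $k^+_\alpha \otimes k^-_{-\alpha} - 1$. This relation identifies each $k^-_{-\alpha}$ with the inverse of $k^+_\alpha$, so that $\bbK_{\Csm}^+$ and $\bbK_{\Csm}^-$ collapse into a single copy of $\bbK_{\Csm}$. Here one must check that this identification is consistent with the Drinfeld cross-relations of $D\,\Ue{\Csm}$: the cross-relation between $k_\alpha$ and an element of $\U{\Csm}^\mp_\beta$ reduces, via the Hopf pairing, exactly to the commutation rule \eqref{eq:Hext:*}, and the cross-relation between $k^+_\alpha$ and $k^-_\beta$ reduces to a trivial identity after the quotient. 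Putting these together yields the desired $\bbK$-module isomorphism
\begin{equation*}
 \U{\Csm}^+ \otimes_{\bbK} \bbK_{\Csm} \otimes_{\bbK} \U{\Csm}^- \longsimto \Dr{\U{\Csm}}.
\end{equation*}

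The main obstacle will be the last step: verifying that the collapse of the two Cartan copies introduces no extraneous relations between $\U{\Csm}^+$ and $\U{\Csm}^-$. Concretely, this reduces to checking that $(k_\alpha, x_\beta)_H = 0$ for $\beta \neq 0$, so that the only cross-relations between $k_\alpha$ and $\U{\Csm}^{\mp}$ coming from Drinfeld's formula are precisely the scalar commutations already present in $\Ue{\Csm}$. This is a direct computation from Definition \ref{dfn:H-pairing} together with the grading of $\Ue{\Csm}$ noted in Lemma \ref{lem:He:Gamma-graded}.
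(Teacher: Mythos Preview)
The paper states this lemma without proof; it is left as an immediate consequence of the Drinfeld double fact cited just above together with Xiao's reduction. Your approach is exactly the standard argument one would supply, and it is correct.

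One small refinement for step~3: rather than phrasing the issue as ``no extraneous relations'', it is cleaner to first verify that each element $k^+_\alpha k^-_{-\alpha}$ is \emph{central} in $D\,\Ue{\Csm}$. This follows from the computation you already outline: since $k_\alpha$ is group-like and $(k_\alpha,x_\beta)_H=0$ for $\beta\neq 0$, the Drinfeld cross-relation forces $k^{+}_\alpha$ and $k^{-}_\alpha$ to satisfy the \emph{same} scalar commutation \eqref{eq:Hext:*} with every homogeneous element of $\U{\Csm}^{\mp}$, whence $k^+_\alpha k^-_{-\alpha}$ commutes with everything. Centrality implies that the two-sided ideal generated by $\{k^+_\alpha k^-_{-\alpha}-1\}$ is simply $\sum_\alpha (k^+_\alpha k^-_{-\alpha}-1)\,D\,\Ue{\Csm}$, so the quotient of $\U{\Csm}^+\otimes\bbK^+_{\Csm}\otimes\bbK^-_{\Csm}\otimes\U{\Csm}^-$ visibly collapses the middle factor $\bbK^+_{\Csm}\otimes\bbK^-_{\Csm}$ to a single copy of $\bbK_{\Csm}$ without touching the outer factors.
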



\section{The case of projective line}
\label{sect:g=0}

In this section we review the work of Kapranov \cite{K:1997},
which identifies the composition subalgebra of the Ringel-Hall algebra
for the projective line $\bbP^1$ 
with the upper triangular part of the quantum affine algebra 
$U_v(\widehat{\mathfrak{sl}}_2)$.
We will present this review in terms of the motivic Hall algebra.
Several notations in this subsection are due to the review of 
Schiffmann \cite{S:lect} and the paper of Burban and Schiffmann \cite{BS:2013}.
Let us also refer the paper of Baumann and Kassel \cite{BK:2001} 
for the detailed account.

In this section we sometimes use the following symbols for $d\in\bbZ$.
$$
 [d]_{\sqrt{\bbL}} := \dfrac{\bbL^{d/2}-\bbL^{-d/2}}{\bbL^{1/2}-\bbL^{-1/2}}
  = \bbL^{-(d-1)/2}(1+\bbL+\cdots+\bbL^{d-1})
    \in \bbK = K(\St{\bbC})[\bbL^{\pm1/2}]
$$


\subsection{The motivic Hall algebra of projective line}

Let $\bbP^1$ be the projective line defined over 
a fixed field $\fk$.
In the setting of the last subsection,
set $\stkM := \stkM(\bbP^1)$, the moduli stack of coherent sheaves on $\bbP^1$.
Our algebra $\He{\bbP^1}=K(\St{\stkM})$ is 
defined on the ring $\bbK := K(\St{\fk})[\bbL^{\pm1/2}]$.

\begin{fct}
The indecomposable objects of the category $\Coh(\bbP^1)$ are
\begin{enumerate}
\item 
line bundles $\shO_{\bbP^1}(n)$ ($n \in \bbZ$),

\item 
torsion sheaves $\shO_{l[x]} := \shO_{\bbP^1}/\fm_x^l$, 
where $x$ is a closed point of $\bbP^1$ and $l \in \bbZ_{\ge 1}$.
\end{enumerate}
\end{fct}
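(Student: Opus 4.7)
The plan is to reduce the classification to two sub-problems --- indecomposable torsion sheaves and indecomposable vector bundles --- by the standard torsion/torsion-free splitting on a smooth curve, and then to invoke the respective structure theorems.

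First I would observe that on the smooth curve $\bbP^1$, for any $\shF \in \Coh(\bbP^1)$ the canonical short exact sequence
\begin{equation*}
 0 \to \shF_{\tor} \to \shF \to \shF/\shF_{\tor} \to 0
\end{equation*}
splits: the quotient $\shF/\shF_{\tor}$ is torsion-free, hence locally free since $\bbP^1$ is a smooth curve; locally free sheaves are locally projective, so the extension group $\Ext^1_{\shO_{\bbP^1}}(\shF/\shF_{\tor},\, \shF_{\tor})$ vanishes (it can be computed by a two-term locally free resolution, and local projectivity kills the higher Ext). Consequently any indecomposable sheaf is either purely torsion or a vector bundle, and it suffices to classify indecomposables in each class.

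For the torsion case I would use the decomposition $\Tor(\bbP^1) = \prod_x \Tor_x$ already recorded in \S\ref{subsec:torsion}. A sheaf in $\Tor_x$ is a finite length module over the discrete valuation ring $\shO_{\bbP^1,x}$, so the structure theorem for finitely generated modules over a principal ideal domain yields a decomposition into cyclic summands of the form $\shO_{\bbP^1,x}/\fm_x^{l}$. Each such summand $M = \shO_{\bbP^1,x}/\fm_x^l$ is indecomposable because $\End(M) = M$ is itself a local ring (with maximal ideal $\fm_x/\fm_x^l$), so its only idempotents are $0$ and $1$. Pushing forward to $\bbP^1$ recovers precisely the sheaves $\shO_{l[x]}$.

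For the locally free part I would invoke the classical Birkhoff--Grothendieck theorem: every vector bundle on $\bbP^1$ is isomorphic to $\bigoplus_{i=1}^r \shO_{\bbP^1}(n_i)$. An indecomposable bundle is therefore a single line bundle $\shO_{\bbP^1}(n)$, and these are themselves indecomposable since $\End(\shO_{\bbP^1}(n)) = H^0(\bbP^1, \shO_{\bbP^1}) = \fk$ is a field. The sole non-routine ingredient here is Birkhoff--Grothendieck; the cleanest reproof proceeds by induction on the rank, twisting $\shE$ of rank $\ge 2$ by $\shO(-n)$ so that $h^0(\shE(-n)) > 0$ while $h^0(\shE(-n-1)) = 0$, using a non-zero section to produce a line subbundle $\shO(n) \hookrightarrow \shE$, and verifying that the resulting extension $0 \to \shO(n) \to \shE \to \shE' \to 0$ splits because the choice of $n$ forces the class in $\Ext^1(\shE', \shO(n))$ to vanish (by induction all the summands of $\shE'$ are line bundles of degree $\le n$). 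Since the statement appears in the excerpt as a \emph{Fact}, citing the classical literature is equally acceptable.
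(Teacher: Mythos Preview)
Your proof is correct, and the paper itself offers no proof --- the statement is recorded as a \emph{Fact}, i.e.\ a citation of the classical result (Birkhoff--Grothendieck together with the structure theorem for finite-length modules over a DVR), exactly as you anticipate in your final sentence.

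One small point of precision: your justification for $\Ext^1(\shF/\shF_{\tor}, \shF_{\tor}) = 0$ is muddled. Local projectivity of $V := \shF/\shF_{\tor}$ only kills the \emph{sheaf} $\Ext$, and the phrase ``two-term locally free resolution'' is out of place since $V$ is already locally free. The clean argument is that $\Ext^1(V,\shF_{\tor}) \cong H^1(\bbP^1, V^\vee \otimes \shF_{\tor})$, and this vanishes because $V^\vee \otimes \shF_{\tor}$ has zero-dimensional support. The conclusion stands; only the stated mechanism needs adjusting.
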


%
%

Let us recall that the numerical Grothendieck group of $\Coh(\bbP^1)$ 
has the description $\Num(\Coh(\bbP^1)) \cong \bbZ^2$,
induced by the map 
$$ 
 \Coh(\bbP^1) \ni E \longmapsto (\rk(E), \deg(E)) \in \bbZ^2
$$ 
attaching the rank and degree to each coherent sheaf $E$. 
The Euler pairing is calculated as 
\begin{align}\label{eq:g=0:euler}
 \chi\bigl((r_1,d_1),(r_2,d_2)\bigr) = r_1 r_2 + r_1 d_2-r_2 d_1.
\end{align}
In particular the extended part $\{k_\alpha \mid \alpha \in \Num(X)\}$
of $\He{\bbP^1}$ is generated by the two elements  
$$ 
 k :=  k_{(1,0)}, \quad c :=k_{(0,1)}. 
$$
Thus, by Remark \ref{rmk:comp-sa},
the composition subalgebra $\U{\bbP^1}$ is generated by
$$
 1^{\ses}_{(1,n)} \ (n\in\bbZ),\quad 
 t_r \ (r\in\bbZ_{\ge 1}),\quad 
 k,\quad  c,
$$ 
where we set
$$
 1^{\ses}_{(1,n)}:= [\stkM_{\lf,(1,n)} \hookrightarrow \stkM] \in \Hall{\bbP^1}
$$
with $\stkM_{\lf,(1,n)}$ the moduli stack of line bundles of degree $n$.

Now we can explain the relation of these generators 
derived by Kapranov \cite[\S5]{K:1997},
where a general framework of the Hopf algebra of automorphic forms was used.
Let us note that 
Baumann and Kassel \cite{BK:2001} also computed the relation by giving 
explicit description of extensions of line bundles and torsion sheaves.

\begin{fct}\label{fct:g=0:rel}
The elements $1^{\ses}_{(1,n)}$, $t_r$, $k$ and $c$ satisfy the following relations.
\begin{enumerate}
\item
$c$ is central.

\item
$[k,t_r] = 0 = [t_r,t_s]$ for all $r,s \in \bbZ_{\ge 1}$.

\item
$k * 1^{\ses}_{(1,n)} = \bbL 1^{\ses}_{(1,n)} * k$ for all $n \in \bbZ$.

\item
$[t_r,1^{\ses}_{(1,n)}] = \tfrac{[2r]_{\sqrt{\bbL}}}{r} 1^{\ses}_{(1,n+r)}$ 
for all $n \in \bbZ$ and $r \in \bbZ_{\ge 1}$.

\item
$1^{\ses}_{(1,m)} * 1^{\ses}_{(1,n+1)}
  + 1^{\ses}_{(1,n)}  * 1^{\ses}_{(1,m+1)}
  = \bbL^{-1} (1^{\ses}_{(1,n+1)} * 1^{\ses}_{(1,m)} + 1^{\ses}_{(1,m+1)} * 1^{\ses}_{(1,n)})$ 
for all $m, n \in \bbZ$.
\end{enumerate}
Here $[\cdot,\cdot]$ denotes the commutator with respect to the product $*$ 
on $\He{\bbP^1}$.
\end{fct}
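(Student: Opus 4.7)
Relations (1) and (3) are immediate from the defining commutation rule \eqref{eq:Hext:*} of the extended algebra, namely $k_\alpha * x_\beta = \bbL^{(\chi(\alpha,\beta)+\chi(\beta,\alpha))/2} x_\beta * k_\alpha$. Using the explicit Euler form \eqref{eq:g=0:euler}: for $c = k_{(0,1)}$ and any $\beta = (r,d)$ one has $\chi((0,1),\beta) + \chi(\beta,(0,1)) = -r + r = 0$, giving (1); for $k = k_{(1,0)}$ and $\beta = (1,n)$ one has $(1+n)+(1-n)=2$, giving (3). For (2), the commutativity $[t_r,t_s]=0$ is inherited from the commutativity of the torsion Hall algebra $\Hall{\bbP^1}_\tor$ established in Section \ref{subsec:torsion} via the isomorphism with $\Lambda_{\bbL_x}$, while $[k,t_r]=0$ follows again from \eqref{eq:Hext:*} together with $\chi((1,0),(0,r))+\chi((0,r),(1,0))=r-r=0$.

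For (4) the plan is to compute the two convolutions $t_r \diamond 1^\ses_{(1,n)}$ and $1^\ses_{(1,n)} \diamond t_r$ in $\Hall{\bbP^1}$ and then translate to $*$ via the Euler-form twist. By Definition \ref{dfn:td} it suffices to work point-by-point at a closed $x \in \bbP^1$ and analyse the stack of short exact sequences $0 \to \shT \to \shE \to \shL \to 0$ (and the reversed one) with $\shT$ supported at $x$ of degree $r$ and $\shL$ a line bundle of degree $n$. Since $\Hom(\shL,\shT) = 0$ and $\Ext^1(\shT,\shL) = 0$, the only middle terms appearing are $\shL\oplus\shT$ and a line bundle of degree $n+r$; I would then identify the relevant substacks of $\stkM^{(2)}$ as Zariski fibrations over $\stkM_{\shL}\times\stkM_{\shT}$ (respectively over $\stkM_{\lf,(1,n+r)}$) with fibres given by the corresponding $\Ext^1$ or $\Hom$ spaces, and pass to classes in $K(\St{\stkM})$. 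The split contributions cancel between the two orderings and the remaining line-bundle contribution carries the coefficient $[2r]_{\sqrt{\bbL}}/r$ forced by the normalisation in Definition \ref{dfn:td}.

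The main obstacle is (5), the cubic Serre-type relation for line bundles. My strategy is to compute $1^\ses_{(1,m)} \diamond 1^\ses_{(1,n+1)}$ by stratifying $\stkM^{(2)}$ according to the Birkhoff-Grothendieck splitting type of the middle term: either $\shE \cong \shO(a)\oplus\shO(b)$ with $a+b = m+n+1$, or $\shE$ is not locally free and splits as a line bundle plus a torsion part. For each stratum the fibre of $b : \stkM^{(2)} \to \stkM$ over a fixed $\shE$ is (a quotient by $\Aut$-actions of) the scheme of sub-line-bundles $\shL_1 \hookrightarrow \shE$ of prescribed degree, which can be computed in $K(\Var{\fk})$ via Grassmannian stratifications and Lemma \ref{lem:bbL}(2). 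Carrying out the analogous analysis for $1^\ses_{(1,n+1)}\diamond 1^\ses_{(1,m)}$, $1^\ses_{(1,n)}\diamond 1^\ses_{(1,m+1)}$ and $1^\ses_{(1,m+1)}\diamond 1^\ses_{(1,n)}$ and forming the signed combination dictated by (5), the contributions from each splitting type of $\shE$ must match up to the overall factor $\bbL^{-1}$. The hard part will be organising these four stratifications so that the cancellation is transparent; this was carried out for the finitary Ringel-Hall algebra by Kapranov \cite{K:1997} and with explicit formulas by Baumann-Kassel \cite{BK:2001}, and the corresponding motivic identity follows by lifting their point-counts to identities of classes in $K(\Var{\fk})$, since all the strata involved are iterated affine bundles whose classes are polynomials in $\bbL$.
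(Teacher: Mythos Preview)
Your treatment of (1), (2), (3) matches the paper's. The difficulties are in (4) and (5), where your sketch contains genuine errors.

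For (4), both of your vanishing claims are in fact the wrong way round: one has $\Hom(\shL,\shT)\neq 0$ (evaluation maps) and $\Ext^1(\shT,\shL)\neq 0$ (Serre duality gives $\Ext^1(\shT,\shL)\cong\Hom(\shL,\shT\otimes\omega_{\bbP^1})^\vee$); what is true is $\Hom(\shT,\shL)=0$ and $\Ext^1(\shL,\shT)=0$. More importantly, your assertion that the only middle terms of $0\to\shO(n)\to\shE\to\shT\to 0$ are $\shL\oplus\shT$ and a line bundle of degree $n+r$ is false for torsion sheaves of length $r>1$: for $\shT=\shO_{r[x]}$ one obtains every $\shO(n+m)\oplus\shO_{(r-m)[x]}$ with $0\le m\le r$. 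The paper avoids this by \emph{not} working with $t_r$ directly: it computes $1_{(0,l)}*1^{\ses}_{(1,n)}$ (all torsion of degree $l$), obtains the closed formula \eqref{eq:g=0:eh}, packages it as the generating-function identity $\zeta_{\mot}(\bbP^1;w/\sqrt{\bbL}z)\,e(z)h(w)=h(w)e(z)$, and only then reads off (4) from the defining relation \eqref{eq:g=0:1T}. Attempting to commute $t_r$ directly would require unwinding its expression as a polynomial in the $1_{(0,l)}$'s and is considerably less clean.

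For (5), your case ``$\shE$ not locally free, line bundle plus torsion'' does not occur: the middle term of an extension of a line bundle by a line bundle is a torsion-free rank-$2$ sheaf on a smooth curve, hence locally free, so by Birkhoff--Grothendieck $\shE\cong\shO(n+p)\oplus\shO(m-p)$. The paper's computation then hinges on one concrete ingredient you do not mention: the class in $K(\Var{\fk})$ of the locus $P_{a,b}$ of \emph{coprime} pairs $(f_1,f_2)\in\fk[z,w]_a\times\fk[z,w]_b$, determined recursively via $\fk[z,w]_a\times\fk[z,w]_b=\coprod_d\bbP(\fk[z,w]_d)\times P_{a-d,b-d}$ and evaluated to $(\bbL-1)(\bbL^2-1)\bbL^{a+b-1}$ for $a,b>0$. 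This gives an explicit closed form for $1^{\ses}_{(1,m)}*1^{\ses}_{(1,n)}$ from which (5) is immediate; the ``match up contributions across four stratifications'' step you describe is thereby replaced by a single formula.
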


\begin{proof}
We sketch the outline of the proof following \cite[\S\S2--3]{BK:2001} 
and \cite[\S4.3]{S:lect}.

The relation (1), $[k,t_r]=0$ in (2), and (3) can be obtained 
by the definition 
\eqref{eq:Hext:*} of the product $*$ and the formula \eqref{eq:g=0:euler}.
Namely, from 
$\chi\bigl((0,d_1),(r_2,d_2)\bigr)+\chi\bigl((r_2,d_2),(0,d_1)\bigr)=0$
we obtain (1) and $[k,t_r]=0$.
Also from $\chi\bigl((1,0),(1,n)\bigr)+\chi\bigl((1,n),(1,0)\bigr)=2$ 
we obtain (3). 

The relation $[t_r,t_s]=0$ in (2) holds obviously since there is no non-trivial extension
between torsion sheaves.

For the relation (4), recall that an extension
of the torsion sheaf $\shO_{l[x]}$ by the line bundle $\shO_{\bbP^1}(n)$ 
is isomorphic to $\shO_{\bbP^1}(n+m) \oplus \shO_{(l-m)[x]}$: 
$$
 \xymatrix{0 \ar[r]& \shO_{\bbP^1}(n) \ar[r] 
           & \shO_{\bbP^1}(n+m) \oplus \shO_{(l-m)[x]} \ar[r] 
           & \shO_{l[x]} \ar[r] & 0}.
$$
Note that a morphism 
$f:\shO_{\bbP^1}(n) \to \shO_{\bbP^1}(n+m) \oplus \shO_{(l-m)[x]}$ 
is injective if and only if 
the image of $f$ is not included in $\shO_{(l-m)[x]}$.
Then by the formulas 
\begin{align*}
&\dim_{\fk} \Hom\bigl(\shO_{\bbP^1}(m),\shO_{\bbP^1}(n)\bigr) 
 =\max(0,n-m+1),\\
&\dim_{\fk}\Hom\bigl(\shO_{\bbP^1}(m),\shO_{l[x]}\bigr) = l
\end{align*}
of the dimension of vector spaces of morphisms,
we find that the stack of extensions of the above form is given by 
$\bbP(\bbA^{m+1}\times\bbA^{l-m}) \setminus \bbP(\bbA^{l-m})$,
whose class in $K(\Var{\fk})$ is $\bbL^{l-m}[m+1]_{\bbL}$
(by Lemma \ref{lem:bbL} (2)).
Thus we have
\begin{align}
\nonumber
1_{(0,l)} * 1^{\ses}_{(1,n)}
&=\sqrt{\bbL}^{-l} \sum_{0 \le m \le l} 
   \bbL^{l-m}[m+1]_{\bbL} [\stkM_{\shO_{\bbP^1}(n+m) \oplus \shT} \hookrightarrow \stkM]
\\
\nonumber
&=\sum_{0 \le m \le l} 
   \sqrt{\bbL}^{l-m} [m+1]_{\sqrt{\bbL}} \,
   [\stkM_{\shO_{\bbP^1}(n+m) \oplus \shT}\hookrightarrow \stkM]
\\
\label{eq:g=0:eh}
&
=\sum_{0 \le m \le l}   [m+1]_{\sqrt{\bbL}} \,
    1^{\ses}_{(1,n+m)} * 1_{(0,l-m)}
\end{align}
Here $\stkM_{\shO_{\bbP^1}(n+m) \oplus \shT}$ is the moduli stack of 
coherent sheaves isomorphic to 
$\shO_{\bbP^1}(n+m) \oplus \shT$ 
with $\shT$ a torsion sheaf of degree $l-m$.
In the last equality, we used the fact that 
the extension in the opposite way is always trivial, 
i.e., $\Ext^1(\shO_{\bbP^1}(n),\shO_{l[x]})=0$.
Then the equation \eqref{eq:g=0:eh} can be rewritten as 
$$
 \zeta_{\mot}(\bbP^1;w/\sqrt{L}z) e(z)h(w) = h(w)e(z),
$$
where we used the the generating series
$$
 e(z):=\sum_{n \in \bbZ} 1^{\ses}_{(1,n)} z^n,\quad
 h(z):=\sum_{l \ge 1} 1_{(0,l)} z^l
$$
and the motivic zeta function \eqref{eq:Zmot:g=0} for $\bbP^1$.
Now the relation (4) follows from the definition 
\eqref{eq:g=0:1T} of $t_r$.

Finally let us check the relation (5).
An extension 
\begin{align}\label{eq:g=0:ext:form1}
 \xymatrix{0 \ar[r]& \shO_{\bbP^1}(n) \ar[r]^(0.55){f} 
           & \shF \ar[r]^(0.35){g} 
           & \shO_{\bbP^1}(m)\ar[r] & 0}
\end{align}
is a non-trivial one only if $m>n$ and 
$\shF \simeq \shO_{\bbP^1}(n+p) \oplus \shO_{\bbP^1}(m-p)$,
with $p \in \bbZ$ and $1 \le p \le (m-n)/2$.

Let us consider the non-trivial case.
The space of extensions of this form with fixed $p$ 
is given by the space of non-trivial morphisms
$f: \shO_{\bbP^1}(n) \to \shO_{\bbP^1}(n+p) \oplus \shO_{\bbP^1}(m-p)$
such that the cokernel is a line bundle.
Writing $f=f_1\oplus f_2$ with 
$f_1:\shO_{\bbP^1}(n) \to \shO_{\bbP^1}(n+p)$ and
$f_2:\shO_{\bbP^1}(n) \to \shO_{\bbP^1}(m-p)$,
the condition for $f$ is 
that $f_1$ and $f_2$, considered as homogeneous polynomials in $\fk[z,w]$ 
of degrees $p$ and $m-n-p$, 
are coprime.
Let $P_{p,m-n-p}$ be the stack of such coprime polynomials $(f_1,f_2)$. 
These $P_{a,b}$ ($a,b\in\bbZ_{\ge0}$) satisfy the following relation.
$$
 \fk[z,w]_a \times \fk[z,w]_{b} =
 \coprod_{d=0}^{\min(a-d,b-d)} \bbP(\fk[z,w]_d) \times P_{a-d,b-d}.
$$
Here $\fk[z,w]_d$ denotes the set of homogeneous polynomials of degree $d$.
In \cite[Lemma 9]{BK:2001}, the number of $\bbF_q$-rational points of the set $P_{p,m-n-p}$ 
is counted by the same relation, and the answer is 
$(q-1)(q^{a+b+1}-1)$ if $a=0$ or $b=0$, 
and $(q-1)(q^2-1)q^{a+b-1}$ otherwise.
Similarly we have
\begin{align*}
 [P_{a,b}] =
 \begin{cases} 
  (\bbL-1)(\bbL^{a+b+1}-1) & \text{if $a=0$ or $b=0$}\\
  (\bbL-1)(\bbL^2-1)\bbL^{a+b-1} & \text{otherwise}
 \end{cases}.
\end{align*}
Let us come back to the counting extensions of the form \eqref{eq:g=0:ext:form1}.
Since the automorphism $\shO(n)$ is given by $\bbA^{*}$ (non-zero scalars) 
and $[\bbA^{*}]=\bbL-1$,
we see that the non-trivial extensions give the term 
$$
 (\bbL^2-1)\bbL^{m-n-1} [\stkM_{\shO(n+p)\oplus\shO(m-p)}]
$$ 
in  $1^{ss}_{(1,m)} \diamond 1^{ss}_{(1,n)}$. 
Here $\stkM_{\shO(a)\oplus \shO(b)}$ with some $a,b\in\bbZ$ denotes 
the moduli stack of the vector bundles isomorphic to 
$\shO(a)\oplus \shO(b)$.

In the case of the trivial extensions, i.e., $\shF \simeq \shO(n)\oplus \shO(m)$,
let us write the morphisms $f,g$ as 
$$
 f=f_1\oplus f_2, \quad g=g_1\oplus g_2
$$ 
with $f_1 \in \End(\shO(n))$, $f_2,g_1\in\Hom(\shO(n),\shO(m))$ 
and $g_2\in\End(\shO(m))$.
Then we see that $f_1$ and $g_2$ are automorphisms, $f_2$ is arbitrary 
and $g_1 = -g_2\circ f_2 \circ f_1$.
Thus the pair $(f,g)$ is parametrized by 
$$
 \Aut(\shO(n))\times\Aut(\shO(m))\times\Hom(\shO(n),\shO(m)).
$$
Since the equivalent relations of the trivial extensions are given 
by $\Aut(\shO(n))\times\Aut(\shO(m))$,
we see that the trivial extensions give the term 
$$
 [\Hom(\shO(n),\shO(m))]  [\stkM_{\shO(n)\oplus\shO(m)}]
 = \bbL^{m-n+1} [\stkM_{\shO(n)\oplus\shO(m)}] 
$$ 
in  $1^{ss}_{(1,m)} \diamond 1^{ss}_{(1,n)}$. 

Considering the twist factor 
$\sqrt{\bbL}^{\chi((1.m),(1,n))}=\sqrt{\bbL}^{1+m-n}$ 
in  $1^{ss}_{(1,m)} * 1^{ss}_{(1,n)}$, we finally have 
\begin{align*}
 1^{ss}_{(1,m)} * 1^{ss}_{(1,n)} &= 
  \sqrt{\bbL}^{m-n+3} [\stkM_{\shO(n) \oplus \shO(m)}] + 
  \sqrt{\bbL}^{m-n-1}(\bbL^2-1)
  \sum_{p=1}^{\lfloor (m-n)/2 \rfloor} 
    [\stkM_{\shO(n+p)\oplus \shO(m-p)}].
\end{align*}
Now the relation (5) is an equivalent expression 
of this formula.
\end{proof}


\subsection{Relation to the quantum affine algebra}

Let us introduce symbols for the quantum affine algebra of $\fsl_2$.
Let 
$$
 Q = \bbZ \alpha \oplus \bbZ \delta
$$ 
denote the root lattice of 
the affine Lie algebra associated to $\fsl_2$.
We begin with the observation due to Kapranov \cite{K:1997}.

\begin{fct}
The map 
$$
 K(\Coh(\bbP^1)) \longto{} Q,\quad
 \overline{\shE} \longmapsto \rk(\shE) \alpha + \deg(\shE) \delta
$$
gives an isometry of lattieces,
where the pairing on $K(\Coh(\bbP^1))$ is given by the symmetrized Euler form 
$$
 (\overline{\shE}_1,\overline{\shE}_2) :=  
 \mathop{\chi}(\shE_1,\shE_2) + \mathop{\chi}(\shE_2,\shE_1), 
$$
and the pairing on $Q$ is taken to be the Killing form.
\end{fct}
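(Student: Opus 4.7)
The plan is to exhibit the map explicitly as an isomorphism of abelian groups and then compute both pairings to verify they agree.

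First I would observe that $K(\Coh(\bbP^1))$ is a free abelian group of rank $2$, with a convenient basis given by $\ol{\shO_{\bbP^1}}$ and $\ol{\shO_{pt}}$ for a closed point $pt$ of degree $1$. Because rank and degree are both additive on short exact sequences, the assignment $\ol{\shE} \mapsto (\rk(\shE),\deg(\shE))$ descends to a well-defined group homomorphism $K(\Coh(\bbP^1)) \to \bbZ^2$, and it sends the chosen basis to $(1,0)$ and $(0,1)$, so it is a group isomorphism. Composing with $(r,d) \mapsto r\alpha+d\delta$ yields that the stated map is a bijective group homomorphism $K(\Coh(\bbP^1)) \to Q$.

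It remains to verify that this bijection is an isometry. Using the Euler form formula \eqref{eq:g=0:euler}, the symmetrized Euler form on the left-hand side is
\begin{align*}
 \chi\bigl((r_1,d_1),(r_2,d_2)\bigr) + \chi\bigl((r_2,d_2),(r_1,d_1)\bigr)
 &= (r_1 r_2 + r_1 d_2 - r_2 d_1) + (r_2 r_1 + r_2 d_1 - r_1 d_2) \\
 &= 2 r_1 r_2.
\end{align*}
On the right-hand side I would write $\delta = \alpha_0 + \alpha_1$ with $\alpha = \alpha_1$, and use the affine Cartan matrix of type $A_1^{(1)}$, which gives $(\alpha_i,\alpha_i)=2$ and $(\alpha_0,\alpha_1)=-2$ for the normalized invariant form (the Killing form up to scalar). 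This yields $(\alpha,\alpha)=2$, $(\alpha,\delta)=0$, $(\delta,\delta)=0$, so
\begin{align*}
 (r_1 \alpha + d_1 \delta, r_2 \alpha + d_2 \delta) = 2 r_1 r_2.
\end{align*}
The two expressions match, completing the verification.

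There is really no hard step here; the only point that requires care is fixing the convention for the invariant form on $Q$ so that the identification $\delta=\alpha_0+\alpha_1$ produces the correct normalization. Once this convention is pinned down, the proof is a direct comparison of the two bilinear forms on a rank-two lattice, and amounts to the computation above.
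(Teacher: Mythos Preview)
Your argument is correct. The paper does not supply its own proof of this statement: it is recorded as a \emph{Fact} attributed to Kapranov \cite{K:1997} and left unproven. Your direct verification --- identifying $K(\Coh(\bbP^1))$ with $\bbZ^2$ via $(\rk,\deg)$, computing the symmetrized Euler form from \eqref{eq:g=0:euler} as $2r_1r_2$, and matching it against the invariant form on $Q$ using $(\alpha,\alpha)=2$, $(\alpha,\delta)=(\delta,\delta)=0$ --- is exactly the natural way to check it, and there is nothing to add.
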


By this identification, the isomorphic classes of indecomposable objects 
in $\Coh(\bbP^1)$ gives the set of positive roots
$$
 \Phi_+ := 
  \{\alpha + n \delta \mid n \in \bbZ \} 
  \sqcup \{ n \delta \mid n\in\bbZ_{\ge 1}\}.
$$
Hereafter, it  will be called the \emph{non-standard set of positive roots} 
(following \cite[Corollary 4,4]{S:lect}).
 
Let $\Lb$ be the Borel subalgebra of the loop algebra $\Lsl$.
We understand that it our subalgebra is associated to the non-standard set 
$\Phi_+$ of positive roots. 
Now we introduce the Borel subalgebra $U_v(\Lb)$
of the quantum loop algebra $U_q(\Lsl)$ of $\fsl_2$. 
We dare to write down the definition for clarifying our argument.

\begin{dfn}\label{dfn:UvLb}
Let $v$ be an indeterminate, and $R$ be the subring of the field $\bbQ(v)$ 
consisting of the rational functions having poles only at $0$ or at roots of $1$.
The associative algebra $U_v(\Lb)$ over $R$ is generated by 
$$
 E_n,\  (n \in \bbZ), \quad
 H_l,\  (l \in \bbZ\setminus\{0\}),\quad
 K^{\pm1},\quad C^{\pm 1/2}
$$ 
with the relations
\begin{align*}
&\text{$C^{\pm1/2}$ is central,\quad $C^{1/2}C^{-1/2}=1$},\\
&[K,H_l]=[H_l,H_m]=0,\\
&K E_n = v^2 E_n K,\\
&[H_l,E_n]=\dfrac{[2l]_v}{l}  C^{-|l|/2} E_{l+n},
\\ 
&E_{m+1}E_n - v^2 E_n E_{m+1} = v^2 E_m E_{n+1} - E_{n+1} E_m
\end{align*}
with $[2l]_v := \tfrac{v^{2l}-v^{-2l}}{v-v^{-1}}$.
\end{dfn}

We denote by $U_{\sqrt{\bbL}}(\Lb)$ the specialization of 
$U_{v}(\Lb)$ at $v=1/\sqrt{\bbL}$.
Precisely speaking,
we set 
$\wt{\bbQ} := \bbQ[v^{\pm1}]/(v^{-2}-\bbL) \simeq \bbQ[\sqrt{\bbL}]$
and $U_{\sqrt{\bbL}}(\Lb) := U_v(\Lb) \otimes_R \wt{\bbQ}$.

Now we can state the fundamental result.
Recall that the element $c$ in $\U{\bbP^1}$ is central. 
Let us introduce $c^{1/2}:=k_{(0,1/2)}$, which is the square root of $c$,
and set 
$$ 
 \bbK_{\bbP^1} := \bbK[c^{\pm1/2}] = K(\St{\fk})[\bbL^{\pm1/2},c^{\pm1/2}].
$$ 

\begin{fct}[{\cite{BK:2001,K:1997}}]
The assignment
\begin{align*}
&E_n \longmapsto 1^{\ses}_{(1,n)} \ (n\in\bbZ), \quad
 H_l \longmapsto t_l c^{-l/2} \ (l\in\bbZ\setminus\{0\}),\quad
 K   \longmapsto k,\quad
 C^{\pm 1/2} \longmapsto c^{\pm 1/2}
\end{align*}
extends to an isomorphism of algebras
$$
 U_{\sqrt{\bbL}}(\Lb) \otimes_{\wt{\bbQ}} \bbK
 \longto{\ \sim \ } 
 \U{\bbP^1} \otimes_{\bbK} \bbK_{\bbP^1} \subset 
 \He{\bbP^1} \otimes_{\bbK} \bbK_{\bbP^1}.
$$ 
\end{fct}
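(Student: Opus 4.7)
The plan is to construct the algebra homomorphism on generators, verify that the defining relations of $U_v(\Lb)$ are satisfied in $\He{\bbP^1} \otimes_{\bbK} \bbK_{\bbP^1}$ after specializing $v = 1/\sqrt{\bbL}$, and then separately establish surjectivity and injectivity.

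First, I would check well-definedness by matching each defining relation in Definition \ref{dfn:UvLb} against Fact \ref{fct:g=0:rel}. The centrality of $C^{\pm 1/2}$ is item (1); the commutations $[K,H_l] = [H_l,H_m] = 0$ reduce to item (2), using that $c$ is central so that $[k, t_l c^{-l/2}] = c^{-l/2}[k, t_l]$ and $[t_l c^{-l/2}, t_m c^{-m/2}] = c^{-(l+m)/2}[t_l, t_m]$. The scaling relation between $K$ and $E_n$ translates to item (3); the ladder action $[H_l, E_n] = \tfrac{[2l]_v}{l} C^{-|l|/2} E_{n+l}$ translates to item (4) after absorbing the central twist $c^{-l/2}$; and the quadratic relation among the $E_n$'s is exactly item (5).

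Surjectivity is immediate from Remark \ref{rmk:comp-sa}: the composition subalgebra $\U{\bbP^1}$ is generated by $\{1^{\ses}_{(1,n)}\}_{n \in \bbZ}$, $\{t_r\}_{r \ge 1}$, $k$, and $c$, all of which lie in the image, and after the extension of scalars to $\bbK_{\bbP^1} = \bbK[c^{\pm 1/2}]$ the square root $c^{\pm 1/2}$ is picked up as the image of $C^{\pm 1/2}$.

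Injectivity is the main obstacle. The plan is to pass to the root-lattice grading $Q = \bbZ\alpha \oplus \bbZ\delta$ via the isometry sending $\ol{\shE}$ to $\rk(\shE)\alpha + \deg(\shE)\delta$, so that the $\Gamma$-grading on $\U{\bbP^1}$ matches the root grading on $U_v(\Lb)$, and then compare graded pieces. A Drinfeld-new PBW-type theorem for $U_v(\Lb)$ produces an explicit basis of ordered monomials in the $E_n$, $H_l$, $K^{\pm 1}$, $C^{\pm 1/2}$. On the Hall side, the ``vertical'' subalgebra generated by the $t_r$'s decomposes via Definition \ref{dfn:td} as a tensor product over closed points of copies of $\Lambda_{\bbL_x}$, which is polynomial in the $t_{r,x}$'s; the ``horizontal'' subalgebra generated by the $1^{\ses}_{(1,n)}$ is free modulo item (5); and the mixed relations in Fact \ref{fct:g=0:rel} allow one to straighten every word into a normal form aligned with the PBW basis of $U_v(\Lb)$. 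The hardest step is ruling out extra relations in $\U{\bbP^1}$ beyond those of Fact \ref{fct:g=0:rel}. A natural route is a specialization argument: over a finite field $\fk = \bbF_q$ the motivic classes specialize to point-counts and $\U{\bbP^1}$ becomes the usual Ringel-Hall composition subalgebra, where the corresponding isomorphism was established by Kapranov and by Baumann--Kassel; lifting back to the motivic Grothendieck ring then reduces to verifying that the convolution product of Bridgeland produces the correct motivic lifts of the finite-field structure constants, which is clear from the definition of the motivic Hall product in terms of pullbacks and pushforwards along the stacks $\stkM^{(2)}$.
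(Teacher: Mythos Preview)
Your well-definedness check and surjectivity argument are correct and constitute exactly what the paper's proof does: the paper's entire argument is the single sentence ``This is the consequence of Fact~\ref{fct:g=0:rel} and the definition of $U_q(\Lb)$,'' deferring the rest to the cited references \cite{BK:2001,K:1997}. So on that level your proposal matches the paper and in fact supplies more detail.

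For injectivity you go further than the paper, and here one comment is in order. Your PBW-comparison outline is the right strategy and is how the cited references proceed: on the Hall side the Harder-Narasimhan decomposition (together with Grothendieck's splitting theorem for vector bundles on $\bbP^1$, which you should invoke explicitly to justify that the subalgebra generated by the $1^{\ses}_{(1,n)}$ has no relations beyond item~(5)) yields a normal form matching the Drinfeld PBW basis of $U_v(\Lb)$. The specialization argument you offer as a fallback, however, is not quite sound as stated. Passing to $\fk=\bbF_q$ and invoking the classical Ringel--Hall result only shows injectivity after applying the point-counting measure $K(\St{\bbF_q})\to\bbQ$; it does not by itself rule out relations in the motivic Hall algebra over a general base field $\fk$ whose coefficients lie in the kernel of every such specialization. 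The phrase ``lifting back to the motivic Grothendieck ring \ldots\ is clear from the definition'' glosses over this. Since the PBW route already suffices, you should drop the specialization paragraph or rephrase it as motivation rather than proof.
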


\begin{proof}
This is the consequence of Fact \ref{fct:g=0:rel} 
and the definition of $U_q(\Lb)$.
\end{proof}

\subsection{The Drinfeld double}

Next we want to consider the Drinfeld double.
In order to do that, we study the coproduct 
on $\He{\bbP^1}$.

\begin{fct}[{\cite[Theorem 3.3]{K:1997}}]\label{fct:g=0:Delta}
\begin{enumerate}
\item 
For any $d \in \bbZ_{\ge1}$ we have
\begin{align}\label{eq:g=0:Delta_t_l}
 \Delta(t_d) = t_d \otimes 1 + c^d \otimes t_d,\quad
\end{align}

\item
For any $n \in \bbZ$  the following equalities hold.
\begin{align}\label{eq:g=0:Delta_e_n}
 \Delta(1^{\ses}_{(1,n)}) = 1^{\ses}_{(1,n)} \otimes 1 
  + \sum_{l \ge 0} \theta_l k c^{n-l} \otimes 1^{\ses}_{(1,n-l)}.
\end{align}
Here $\theta_l \in \Hall{\bbP^1}$ ($l \in \bbZ_{\ge 0}$) 
is defined by the following generating function.
$$
 \sum_{l \ge 0} \theta_{l} z^l 
=\exp\Bigl((\bbL^{1/2}-\bbL^{-1/2})\sum_{d \ge 1} t_d z^d \Bigr).
$$
\end{enumerate}
\end{fct}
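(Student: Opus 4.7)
For Part (1), the plan is to reduce to the primitivity of power-sum symmetric functions. The element $t_d = \sum_x t_{d,x}$ lies in the torsion sub-bialgebra $\Hall{\bbP^1}_{\tor} \simeq \bigotimes_x \Hall{\bbP^1}_{\tor,x}$, and under the bialgebra isomorphism $\phi_x: \Hall{\bbP^1}_{\tor,x} \simto \Lambda_{\bbL_x}$ recalled in the discussion of Steinitz's classical Hall algebra, each non-zero $t_{d,x}$ is (by Definition \ref{dfn:td}) a scalar multiple of $\phi_x^{-1}(p_{d/\deg(x)})$. Since power-sums are primitive in $\Lambda$, i.e.\ $\Delta(p_n) = p_n \otimes 1 + 1 \otimes p_n$, this forces $\Delta(t_{d,x}) = t_{d,x} \otimes 1 + 1 \otimes t_{d,x}$ in each $\Hall{\bbP^1}_{\tor,x}$, and summing over $x$ (which is compatible with the tensor decomposition of $\Hall{\bbP^1}_{\tor}$) yields $\Delta_{\Hall{}}(t_d) = t_d \otimes 1 + 1 \otimes t_d$. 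Because every subsheaf of a torsion sheaf on a smooth curve is again torsion, this coproduct agrees with the one in the ambient $\Hall{\bbP^1}$. Finally, since $t_d$ lies in $\Hall{\bbP^1}_{(0,d)}$ with trivial $k$-part, Definition \ref{dfn:coprod}(2) gives $\Delta_{\He{}}(t_d) = \delta_0(t_d \otimes 1) + \delta_0(1 \otimes t_d) = t_d \otimes 1 + k_{(0,d)} \otimes t_d = t_d \otimes 1 + c^d \otimes t_d$.

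For Part (2), the pullback $b^*[\stkM_{\lf,(1,n)} \hookrightarrow \stkM]$ is the moduli stack of 2-flags $\shE_1 \hookrightarrow \shE_2$ with $\shE_2$ a line bundle of degree $n$. The subobjects of such an $\shE_2$ on $\bbP^1$ are either $0$, $\shE_2$ itself, or a sub-line-bundle $\shL'$ of degree $n-l$ with torsion cokernel of length $l \ge 1$. Pushing forward by $(a_1, a_2)$ and applying $\delta_0$ therefore yields a decomposition of the form $\Delta_{\He{}}(1^{\ses}_{(1,n)}) = 1^{\ses}_{(1,n)} \otimes 1 + \sum_{l \ge 0} A_l^{(n)} \otimes 1^{\ses}_{(1,n-l)}$, with $A_l^{(n)} \in \Hall{\bbP^1}_{(0,l)} \cdot kc^{n-l}$ encoding the motivic class of the stack of extensions with locally free middle term. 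The task is to identify $A_l^{(n)}$ with $\theta_l kc^{n-l}$ for all $n$ and $l$.

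The cleanest route for this identification is via Green's bialgebra axiom $\Delta(x * y) = \Delta(x) * \Delta(y)$: apply $\Delta$ to the commutation relation $[t_r, 1^{\ses}_{(1,n)}]_* = \tfrac{[2r]_{\sqrt{\bbL}}}{r} 1^{\ses}_{(1,n+r)}$ from Fact \ref{fct:g=0:rel}(4), substitute the formula of Part (1) and the above ansatz, and extract a recursion for the $A_l^{(n)}$. Assembling them into a generating series $\sum_l A_l^{(n)} z^l$ turns the recursion into an equation whose unique solution matches $\sum_l \theta_l z^l = \exp\bigl((\bbL^{1/2} - \bbL^{-1/2}) \sum_d t_d z^d\bigr)$. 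The main obstacle is the base case of the recursion, which requires a direct motivic computation of the stack of extensions of a torsion sheaf by a line bundle on $\bbP^1$ (parallel to the coprime-polynomial calculation in the proof of Fact \ref{fct:g=0:rel}(5)); once this is established, the bialgebra axiom propagates the formula to all $n$.
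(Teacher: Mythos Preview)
For Part~(1), your approach via primitivity of power-sums under the local bialgebra isomorphisms $\phi_x$ is correct and essentially equivalent to the paper's: the paper computes $\Delta(1_{(0,d)}) = \sum_m 1_{(0,d-m)} k_{(0,m)} \otimes 1_{(0,m)}$ directly from the subobject structure of torsion sheaves and then uses the generating-function relation \eqref{eq:g=0:1T}, whereas you transport the known primitivity of $p_n \in \Lambda$ through $\phi_x$. Both routes are standard.

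For Part~(2), there is a genuine gap. Write $A_l^{(n)} = a_l^{(n)} k c^{n-l}$ with $a_l^{(n)} \in \Hall{\bbP^1}_{(0,l)}$. If you actually carry out the step you propose---apply the bialgebra axiom to $[t_r, 1^{\ses}_{(1,n)}]_* = \tfrac{[2r]_{\sqrt{\bbL}}}{r}\, 1^{\ses}_{(1,n+r)}$ using Part~(1) and your ansatz---the outcome is not a recursion in $l$ but a tautology. Since $t_r$ commutes with every torsion class, with $k$, and with $c$, the only surviving cross-term in $[\Delta(t_r),\Delta(1^{\ses}_{(1,n)})]$ comes from $[c^r \otimes t_r,\; a_l^{(n)} k c^{n-l} \otimes 1^{\ses}_{(1,n-l)}]$, and both sides of the identity reduce to
\[
\tfrac{[2r]_{\sqrt{\bbL}}}{r}\Bigl(1^{\ses}_{(1,n+r)} \otimes 1 \;+\; \sum_{l\ge 0} a_l^{(n)}\, k c^{\,n+r-l} \otimes 1^{\ses}_{(1,n+r-l)}\Bigr).
\]
The sole constraint extracted is $a_l^{(n)} = a_l^{(n+r)}$, i.e.\ independence of $n$; nothing forces $a_l = \theta_l$, and there is no ``equation whose unique solution matches $\sum_l \theta_l z^l$''. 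The paper does not use the bialgebra axiom here at all: it observes (equivalently to what your computation gives) that tensoring by a line bundle reduces to $n=0$, and then performs the direct motivic count of surjections $\shO \twoheadrightarrow \bigoplus_i \shO_{n_i[x_i]}$, showing that the resulting class equals $\theta_l$. That direct count is not an auxiliary ``base case'' feeding a recursion---it is the entire content of Part~(2), and your plan does not avoid it.
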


\begin{proof}
(1)
Since the category of torsion sheaves is closed under 
taking subobjects and quotients,
we immediately find from Definition \ref{dfn:coprod} of $\Delta$ that 
$$
 \Delta(1_{(0,d)})=\sum_{m\in\bbZ,\ 0\le m \le d}
  1_{(0,d-m)}k_{(0,m)} \otimes 1_{(0,m)}.
$$
Then using the relation \eqref{eq:g=0:1T} between $1_{(0,d)}$'s and $t_d$'s 
and the fact that $c$ is central,
we have the consequence.

(2)
Since censoring by line bundles preserves extensions,
it is enough to study the case $n=0$, 
namely the decomposition of the trivial line bundle $\shO$.
We sketch the outline following  \cite[Example 4.12]{S:lect}.

It is enough to count the surjection
$\shO \to \oplus_i \shO_{n_i[x_i]}$.
Each morphism $\shO \to \shO_{n_i[x_i]}$  is parametrized by  
$\Hom(\shO,\shO_{n_i[x_i]}) \setminus \Hom(\shO,\shO_{(n_i-1)[x_i]})$,
whose class in $K(\St{\fk})$ is equal to $\bbL^{2n_i}(1-\bbL^{-1})$,
so that we have 
$$
 \Delta(1^{\ses}_{(1,0)}) = 
  1^{\ses}_{(1,0)} \otimes 1 
  + \sum_{d\ge0}\bbL^{d/2}u_d k_{(1,-d)} 1^{\ses}_{(1,-d)}
$$
with 
$$
 u_d := \sum_{(x_i,n_i)}\prod_i (1-\bbL^{-1}) t_{n_i[x_i]},
$$
where $t_{n[x]} :=  [\stkM_{0, n[x]}  \hookrightarrow \stkM]$
is the element corresponding to 
the moduli stack $\stkM_{0, n[x]}$ of torsion sheaves of degree $n$ with support on $x$,
and the summation is taken over the set of tuples of distinct points $x_i$'s 
and degrees $n_i$'s with $\sum_i n_i = d$.
Finally a quick observation yields $\bbL^{d/2} u_d = \theta_d$.
\end{proof}

\begin{rmk}\label{rmk:g=0:Delta}
In the proof we don't use the property of $\bbP^1$, 
but only use the property of a smooth curve.
\end{rmk}

Next we study the Hall pairing $(\cdot,\cdot)_H$ on $\U{\bbP^1}$.

\begin{fct}[{\cite[Lemma 4.4]{BS:2013}}]\label{fct:g=0:HopfPairing}
\begin{enumerate}
\item 
$$
 (t_d,t_l)_H = \delta_{d,l}\dfrac{1}{d}
  \dfrac{[2d]_{\sqrt{\bbL}}}{\bbL^{1/2} - \bbL^{-1/2}}.
$$
\item
\begin{align}\label{eq:g=0:hp:t-th}
 (t_d,\theta_l)_H = \delta_{d,l} \dfrac{[2d]_{\sqrt{\bbL}}}{d}
\end{align}
\end{enumerate}
\end{fct}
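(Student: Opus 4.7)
The plan is to reduce $(t_d, t_l)_H$ to local computations on the torsion-at-a-point subalgebras $\Hall{\bbP^1}_{\tor,x}$, apply the isometric isomorphism $\phi_x: \Hall{\bbP^1}_{\tor,x} \simto \Lambda_{\bbL_x}$, and assemble the answer via the motivic zeta function of $\bbP^1$. First, the Hall pairing vanishes between torsion sheaves with distinct supports because of the factor $\delta_{\shE,\shF}$ in Definition~\ref{dfn:H-pairing}, so
\[
 (t_d, t_l)_H = \sum_{x} (t_{d,x}, t_{l,x})_H,
\]
where only closed points with $\deg(x)$ dividing both $d$ and $l$ contribute. Under $\phi_x$ the element $t_{d,x}$ is proportional to the power sum $p_{d/\deg(x)}$, and the orthogonality $(p_m, p_n) = \delta_{m,n}\, m/(q^m - 1)$ in $\Lambda_q$ with $q = \bbL_x$, together with the identification $\bbL_x = \bbL^{\deg(x)}$ in $K(\St{\fk})$ coming from the Weil restriction $\operatorname{Res}_{\fk_x/\fk}\bbA^1_{\fk_x} \simeq \bbA^{\deg(x)}_{\fk}$, yields after a short computation
\[
 (t_d, t_d)_H = \frac{[d]_{\sqrt{\bbL}}^2}{d(\bbL^d - 1)} \sum_{x:\, \deg(x) \mid d} \deg(x),
\]
and $(t_d, t_l)_H = 0$ for $d \ne l$.

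To evaluate the remaining sum, I would take the logarithm of the motivic zeta function \eqref{eq:Zmot:g=0}, giving $\log \zeta_{\mot}(\bbP^1; z) = \sum_{n \ge 1}(1 + \bbL^n) z^n / n$ and identifying the motivic analog of $|\bbP^1(\bbF_{q^n})| = \sum_{\deg(x) \mid n} \deg(x)$ with $1 + \bbL^n$. Substituting and using the elementary factorization $(\bbL^{d/2} - \bbL^{-d/2})(1 + \bbL^d) = (\bbL^{d/2} + \bbL^{-d/2})(\bbL^d - 1)$ reduces the expression to $\frac{1}{d}\cdot\frac{[2d]_{\sqrt{\bbL}}}{\bbL^{1/2} - \bbL^{-1/2}}$, proving (1).

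For (2), I would exploit the coproduct $\Delta(t_d) = t_d \otimes 1 + c^d \otimes t_d$ of Fact~\ref{fct:g=0:Delta}(1) together with the Hopf pairing identity $(t_d, t_{\lambda_1} * \cdots * t_{\lambda_k})_H = (\Delta^{k-1}(t_d), t_{\lambda_1} \otimes \cdots \otimes t_{\lambda_k})_H$. Expanding $\theta_l = (\bbL^{1/2} - \bbL^{-1/2})\, t_l + R_l$, where $R_l$ collects products $t_{\lambda_1} * \cdots * t_{\lambda_k}$ with $k \ge 2$, an easy induction gives
\[
 \Delta^{k-1}(t_d) = \sum_{i=1}^{k} (c^d)^{\otimes(i-1)} \otimes t_d \otimes 1^{\otimes(k-i)}.
\]
Pairing termwise against $t_{\lambda_1} \otimes \cdots \otimes t_{\lambda_k}$, every summand contains a factor of the form $(c^d, t_{\lambda_j})_H$ or $(1, t_{\lambda_j})_H$ with $\lambda_j \ge 1$, which vanishes because both $c^d$ and $1$ are represented by $[\stkM_0]$ while $t_{\lambda_j}$ is a class of nonzero torsion sheaves. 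Hence $(t_d, R_l)_H = 0$ and (2) follows directly from (1) by a one-line computation. The main obstacle is the motivic identity $\sum_{x: \deg(x) \mid d} \deg(x) = 1 + \bbL^d$: over an arbitrary base field $\fk$ one cannot literally count closed points inside $K(\Var{\fk})$, and the equality must be interpreted through the logarithm of the motivic zeta function. This is precisely where the specific geometry of $\bbP^1$, encoded in \eqref{eq:Zmot:g=0}, enters the argument decisively.
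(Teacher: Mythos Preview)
Your argument for (1) is essentially identical to the paper's: decompose $t_d$ over closed points, compute the local pairings via the isometry $\phi_x$ to $\Lambda_{\bbL_x}$, and identify the resulting sum $\sum_{\deg(x)\mid d}\deg(x)$ with the $d$-th log-coefficient of $\zeta_{\mot}(\bbP^1;z)$ (the paper writes this as $[\bbP^1_d]$). For (2) the paper simply says it is a restatement of (1) in terms of $\theta_l$; your primitivity argument via $\Delta^{k-1}(t_d)$ makes explicit exactly why the nonlinear terms of $\theta_l$ drop out, which is the content the paper leaves implicit.
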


\begin{proof}
(1)
Decomposing into the support, we have 
$t_d = \sum_{x \in \bbP^1} t_{x,d}$,
where $t_{d,x}$ is defined in Definition \ref{dfn:td}.
Recalling the value \eqref{eq:HL:Hpair} of the Hall pairing,
we can calculate $(t_d,t_d)_H$ as follows. 
\begin{align*}
(t_d,t_d)_H = \sum_{x \in \bbP^1} \dfrac{\deg(x)}{d}\dfrac{([d]_{\sqrt{\bbL}})^2}{\bbL^d-1}
= \dfrac{1}{d}\dfrac{([d]_{\sqrt{\bbL}})^2}{\bbL^d-1} [\bbP^1_d]
= \dfrac{1}{d} \dfrac{[2d]_{\sqrt{\bbL}}}{\bbL^{1/2} - \bbL^{-1/2}}.
\end{align*}
Here $[\bbP^1_d] \in K(\Var{\fk})$ is given by
$$
 \zeta_{\mot}(\bbP^1;z) =
   \exp\Bigl(\sum_{d \ge 1} [\bbP^1_d] \dfrac{z^d}{d} \Bigr)
$$

(2) is a restatement of (1) in terms of $\theta_l$.
See \cite[Lemma 4.4]{BS:2013} for the detail.
\end{proof}

\begin{rmk}\label{rmk:g=0:HopfPairing}
The proof will also work for an arbitrary smooth curve.
\end{rmk}

Consider the algebra 
$$
 \tDU{\bbP^1} := \Dr{\U{\bbP^1}} \otimes_{\bbK} \bbK_{\bbP}
$$
which has a triangular decomposition
$$
 \DU{\bbP^1} \otimes_{\bbK} \bbK_{\bbP} 
=\U{\bbP^1}^+ \otimes_\bbK \bbK[\Num(\bbP^1)][c^{\pm 1/2}] \otimes_\bbK
 \U{\bbP^1}^-.
$$ 
Following \cite[\S4]{BS:2013},
we replace the generator $t_d^{\pm}$ ($d\in\bbZ_{\ge 1}$) of $\U{\bbP^1}^{\pm}$ by
$$
 \wt{t}_d := t_d^{\pm} c^{\mp d/2}.
$$
Then \eqref{eq:g=0:Delta_t_l} is rewritten as
$$
 \Delta(\wt{t}^+_d) = \wt{t}^+_d \otimes c^{-d/2} + c^{d/2}\otimes \wt{t}^+_d.
$$
It is also convenient to rewrite \eqref{eq:g=0:Delta_e_n} as
$$
 \Delta(1^{\ses,+}_{(1,n)}) = 
  1^{\ses,+}_{(1,n)} \otimes 1 
  + \sum_{l \ge 0} \wt{\theta}^+_l k c^{n-l/2} \otimes 1^{\ses,+}_{(1,n-d)}
$$
with
$$
 \wt{\theta}^\pm_d := \theta^\pm_d c^{\mp d/2}.
$$
The formula \eqref{eq:g=0:hp:t-th} is rewritten as 
$$
 (\wt{t}_d,\wt{\theta}_l)_H = \delta_{d,l} \dfrac{[2d]_{\sqrt{\bbL}}}{d}
$$
Now by Definition \ref{dfn:d_red-U} of the reduced Drinfeld double,
the algebra $\tDU{\bbP^1}$ is described in the following way.

\begin{prop}[{\cite[\S4]{BS:2013}}]\label{prop:g=0:DU-rel}
$\tDU{\bbP^1}$ is the associative algebra with the generators 
$$
 1^{\ses,\pm}_{(1,n)} \ (n \in \bbZ),\quad
 \wt{t}^{\pm}_d \ (d \in \bbZ_{\ge 1}), \quad
 k^{\pm1}, \quad
 c^{\pm 1/2}
$$
modulo  the relation 
\begin{enumerate}
\item
$c^{\pm 1/2}$ are central, $c^{1/2}c^{-1/2}=1$ and  
$k k^{-1} = 1 = k^{-1} k$.

\item 
$[k,\wt{t}^\pm_l] = 0 = [\wt{t}^\pm_l,\wt{t}^\pm_m]=0$
for any $l,m \in \bbZ_{\ge 1}$.

\item
$k 1^{\ses,\pm}_{(1,n)} = \bbL^{\mp1} 1^{\ses,\pm}_{(1,n)} k$ 
for any $n \in \bbZ$. 

\item
For any $d \in \bbZ_{\ge 1}$ and $n \in \bbZ$ we have
$$
 [\wt{t}^\pm_d, 1^{\ses,\pm}_{(1,n)}] 
 = \dfrac{[2 d]}{d} 1^{\ses,\pm}_{(1,n+d)} c^{\mp d/2}.
$$

\item
For any $d \in \bbZ_{\ge 1}$ and $n \in \bbZ$ we have
$$
 [\wt{t}^\pm_d, 1^{\ses,\mp}_{(1,n)}] 
 = -\dfrac{[2 d]}{d} 1^{\ses,\mp}_{(1,n-d)} c^{\pm d/2}.
$$

\item
For any $d,l \in \bbZ_{\ge 1}$ we have
$$
 [\wt{t}^+_d, \wt{t}^\mp_l] = 
 \delta_{d,l} \dfrac{[2 d]}{d} \dfrac{c^{-d}-c^{d}}{\bbL^{1/2}-\bbL^{-1/2}}.
$$

\item
For $n,m \in \bbZ$ we have
\begin{align*}
[1^{\ses,+}_{(1,m)}, 1^{\ses,-}_{(1,n)}] = 
 \begin{cases}
  \dfrac{1}{1-\bbL} \wt{\theta}^+_{n-m} k c^{(m+n)/2} & \text{if $n>m$} \\
  0 & \text{if $n=m$} \\
  \dfrac{1}{\bbL-1} \wt{\theta}^-_{m-n} k^{-1} c^{-(m+n)/2} & \text{if $n<m$} 
 \end{cases}.
\end{align*}
\end{enumerate}
\end{prop}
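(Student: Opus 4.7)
The plan is to verify the presentation in three stages, corresponding to generation, intrinsic relations, and cross-relations. For generation, Remark \ref{rmk:comp-sa} shows that $\U{\bbP^1}$ is generated over $\bbK$ by $1^{\ses}_{(1,n)}$ and $t_r$, so $\Ue{\bbP^1}$ also carries $k$ and $c$; by Definition \ref{dfn:d_red-U} the reduced Drinfeld double $\Dr{\U{\bbP^1}}$ is generated by two copies of these, and the identification $k^+_\alpha \otimes k^-_{-\alpha} = 1$ collapses the two extension copies into a single $\{k^{\pm 1}, c^{\pm 1/2}\}$. The rescaling $\wt{t}^\pm_d := t^\pm_d c^{\mp d/2}$ does not affect generation.

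Next, relations (1)--(3) and (4) are inherited from the intrinsic relations of $\Ue{\bbP^1}$ recorded in Fact \ref{fct:g=0:rel} after applying the tilde substitution and using that $c$ is central; in particular relation (4) is obtained by multiplying Fact \ref{fct:g=0:rel}(4) by $c^{-d/2}$. Thus the substantive content lies in relations (5), (6), (7), which must be extracted from the Drinfeld double defining identity
\[
 \sum a^-_{(1)} b^+_{(2)} (a_{(2)}, b_{(1)})_H
 = \sum b^+_{(1)} a^-_{(2)} (a_{(1)}, b_{(2)})_H
\]
specialised to pairs of generators. The coproducts needed are furnished by Fact \ref{fct:g=0:Delta} (rewritten in terms of $\wt{t}^\pm$ and $\wt{\theta}^\pm$), and the basic pairings by Fact \ref{fct:g=0:HopfPairing}.

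For (6), take $a = \wt{t}^-_l$ and $b = \wt{t}^+_d$: the coproducts are primitive-like, and the only non-vanishing Hopf pairings between torsion components produce, via \eqref{eq:g=0:hp:t-th}, the diagonal term $\delta_{d,l}[2d]/d$; the central factors $c^{\pm d}$ arising from the tilde rescaling assemble into $(c^{-d}-c^d)/(\bbL^{1/2}-\bbL^{-1/2})$. For (5), take $a = 1^{\ses,-}_{(1,n)}$ and $b = \wt{t}^+_d$: orthogonality of the line-bundle and torsion components in the pairing annihilates most summands, leaving a single contribution whose weight forces the shift $n \mapsto n-d$ and the central twist $c^{d/2}$. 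For (7), take $a = 1^{\ses,-}_{(1,n)}$ and $b = 1^{\ses,+}_{(1,m)}$: the full sum in $\Delta(1^{\ses}_{(1,\bullet)})$ couples with its mirror, and the pairing requirements pin down $l = |m-n|$, yielding the $\wt{\theta}^\pm_{|m-n|}$ term on the right-hand side, with the case $n=m$ handled by weight considerations.

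The main obstacle will be the last step, relation (7), since it requires knowing the full pairing $(\theta_l, 1^{\ses}_{(1,\bullet)})_H$ (or rather the pairing of the appropriate mixed-extension elements), and this must be computed from the generating-function definition of $\theta_l$ together with \eqref{eq:g=0:hp:t-th} and the orthogonality of torsion versus line-bundle components under $(\cdot,\cdot)_H$. Completeness of the presentation, i.e. that no further relations are needed, will follow from the triangular decomposition of $\Dr{\U{\bbP^1}}$ given in the final lemma of \S\ref{subsec:comp-subalg}, combined with the known presentation of $U_v(\Lsl)$ from Definition \ref{dfn:UvLb} matched via Fact \ref{fct:Dr:UqLg:double}.
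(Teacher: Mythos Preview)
Your approach is essentially the same as the paper's: the paper does not give a separate proof but simply remarks that the presentation follows ``by Definition~\ref{dfn:d_red-U} of the reduced Drinfeld double'' together with the coproduct formulas (Fact~\ref{fct:g=0:Delta}) and the Hopf pairing values (Fact~\ref{fct:g=0:HopfPairing}), citing \cite[\S4]{BS:2013} for the computation. Your three-stage outline (generation, intrinsic relations from Fact~\ref{fct:g=0:rel}, cross-relations from the double identity) is exactly how one unpacks that sentence.

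One small caution on your completeness argument: invoking ``the known presentation of $U_v(\Lsl)$'' via Fact~\ref{fct:Dr:UqLg:double} risks circularity, since in this paper the isomorphism $\tDU{\bbP^1}\simeq U_{\sqrt{\bbL}}(\Lsl)$ is established \emph{after} Proposition~\ref{prop:g=0:DU-rel} and uses it. The triangular decomposition lemma at the end of \S\ref{subsec:comp-subalg} already suffices on its own: it tells you that as a $\bbK$-module the double is $\U{\bbP^1}^+\otimes\bbK_{\bbP^1}\otimes\U{\bbP^1}^-$, so once the cross-relations (5)--(7) allow you to straighten any word into that ordered form, no further relations can hold beyond those internal to each tensor factor. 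Note in this connection that the quadratic relation Fact~\ref{fct:g=0:rel}(5) among the $1^{\ses,\pm}_{(1,n)}$ is an internal relation of $\U{\bbP^1}^\pm$ and should be understood as part of the presentation, even though it is not listed separately in the proposition's enumeration; the subsequent comparison with Definition~\ref{dfn:g=0:UqLg}(6) makes this clear.
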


Although we have already described the quantum loop algebra $U_v(\Lsl)$ 
in Fact \ref{fct:Dr:UqLg:double} as the Drinfeld double of $U_v(\Lsl)$,
have we dare to write down the definition for the clarification.
Recall Definition \ref{dfn:UvLb} where we set $R \subset \bbQ(v)$ 
consisting of the rational functions having poles only at $0$ or at roots of $1$.

\begin{dfn}\label{dfn:g=0:UqLg}
$U_v(\Lsl)$ is the associative algebra over $R$ generated by 
$$
 E^\pm_n \ (n \in \bbZ), \quad
 H_r \ (r \in \bbZ \setminus \{0\}), \quad 
 K^{\pm1}, \quad C^{\pm 1/2}
$$
subject to the following relations.
\begin{enumerate}
\item 
$C^{\pm 1/2}$ are central, $C^{1/2}C^{-1/2}=1$ and 
$K K^{-1} = K^{-1}K = 1$.

\item
$[K,H_r]=0$ for any $r \in \bbZ\setminus \{0\}$.

\item
$[K,E^\pm_n]=v^{\mp2}E^\pm_n K$ for any $n \in \bbZ$.
 
\item
For any $r,s \in \bbZ\setminus\{0\}$ we have
$$
 [H_r,H_s] = \delta_{r+s,0}\dfrac{[2r]_v}{r}\dfrac{C^m-C^{-m}}{v-v^{-1}}.
$$

\item
For any $n \in \bbZ$ and $r \in \bbZ\setminus\{0\}$ we have
$$
 [H_r,E^\pm_n] = \pm \dfrac{[2r]_v}{r}E^\pm_{n+r}C^{\mp |r|/2}.
$$

\item
For any $m,n \in \bbZ$ we have
$$
 E^\pm_m E^\pm_{n+1} + E^\pm_n E^\pm_{m+1}
=v^{\pm2}(E^\pm_{n+1} E^\pm_m + E^\pm_{m+1} E^\pm_n).
$$

\item
For any $m,n \in \bbZ$ we have
$$
 [E^+_m,E^-_n] = 
 \dfrac{v}{v-v^{-1}}\bigl(\Psi^+_{m+n}C^{(m-n)/2} - \Psi^-_{m+n}C^{(n-m)/2}  \bigr)
 K^{\sgn(m+n)},
$$
where $\sgn(n)$ denotes the sign of the integer $n$,
and $\Psi^{\pm}_{\pm d}$ ($d \in \bbZ_{\ge 1}$) are given by
$$
 1+ \sum_{d \ge 1} \Psi^\pm_{\pm d} z^d = 
  \exp\Bigl(\pm(v^{-1}-v)\sum_{d \ge 1} H_{\pm d} z^d \Bigr)
$$
and $\Psi^{\pm}_{\mp d}=0$ ($d\in\bbZ_{\ge 1}$).
\end{enumerate}
\end{dfn}

\begin{rmk}
We followed \cite[Definition 4.9]{BS:2013}.
The choice of generators are crucial for the construction of the isomorphism 
explained below.
\end{rmk}

Now we can state the main result of this section.
Recall that $\wt{\bbQ} := \bbQ[v^{\pm 1}]/(v^{-2} - \bbL)$.

\begin{thm}[{\cite[Proposition 4.11]{BS:2013}}]
Let $U_{\sqrt{\bbL}}(\Lsl) := U_v(\Lsl) \otimes_R \wt{\bbQ}$.
Then the map $U_{\sqrt{\bbL}}(\Lsl) \to \tDU{\bbP^1}$ given by 
$$
 E^\pm_n   \longmapsto 1^{\ses,\pm}_{(1, \pm n)}        \ (n \in \bbZ),\quad
 H_{\pm d} \longmapsto \pm \wt{t}^\pm_{\pm d} \ (d \in \bbZ_{\ge 1}),\quad
 K       \longmapsto k,\quad
 C^{1/2} \longmapsto c^{1/2} 
$$
is an isomorphism of algebras.
\end{thm}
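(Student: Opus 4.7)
The plan is to compare the presentation of $\tDU{\bbP^1}$ given in Proposition \ref{prop:g=0:DU-rel} with the presentation of $U_v(\Lsl)$ in Definition \ref{dfn:g=0:UqLg}, and observe that under the substitution $v = 1/\sqrt{\bbL}$ (so that $v^{-2} = \bbL$ and $[n]_v = [n]_{\sqrt{\bbL}}$) the two presentations agree generator-by-generator and relation-by-relation. Once this is verified, the stated assignment on generators extends uniquely to an algebra homomorphism $\varphi: U_{\sqrt{\bbL}}(\Lsl) \to \tDU{\bbP^1}$. Since the listed generators of $\tDU{\bbP^1}$ are precisely the images of the generators of $U_{\sqrt{\bbL}}(\Lsl)$, surjectivity is automatic. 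Injectivity will follow from a comparison of the triangular decompositions of the two algebras.

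First I would match the Cartan-type relations: the central relations on $C^{\pm 1/2}$ and $K K^{-1}=1$ in Definition \ref{dfn:g=0:UqLg}(1) correspond to Proposition \ref{prop:g=0:DU-rel}(1); the commutation $[K,H_r]=0$ in (2) corresponds to the part of (2) stating $[k,\wt{t}^\pm_l]=0$; and the Heisenberg-type relation (4) matches (6), using the identity $[2r]_v/r = [2r]_{\sqrt{\bbL}}/r$ and the substitution $C \mapsto c$. The relations $[K,E^\pm_n]=v^{\mp 2}E^\pm_n K$ in (3) of Definition \ref{dfn:g=0:UqLg} become $k \cdot 1^{\ses,\pm}_{(1,\pm n)} = \bbL^{\mp 1} 1^{\ses,\pm}_{(1,\pm n)} k$ which is Proposition \ref{prop:g=0:DU-rel}(3). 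The relations $[H_{\pm d}, E^\pm_n] = \pm \frac{[2d]_v}{d} E^\pm_{n+d} C^{\mp d/2}$ turn, via the sign conventions in the assignment (in particular $H_{\pm d}\mapsto \pm\wt{t}^\pm_{\pm d}$ and $E^\pm_n\mapsto 1^{\ses,\pm}_{(1,\pm n)}$), into precisely the commutators in Proposition \ref{prop:g=0:DU-rel}(4)--(5).

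The main obstacle, and the step that requires the most care with signs, is the cross-relation (7) of Definition \ref{dfn:g=0:UqLg} for $[E^+_m, E^-_n]$, which must be matched with Proposition \ref{prop:g=0:DU-rel}(7). The two expressions look rather different because the former uses $\Psi^\pm$ generating series defined through $H_{\pm d}$, while the latter uses $\wt{\theta}^{\pm}$ defined through $t_d$. However, after rewriting $\wt{\theta}^\pm_d$ via the defining generating function of Fact \ref{fct:g=0:Delta} and using the identification $H_{\pm d} = \pm \wt{t}^\pm_{\pm d}$, one finds that the series $\exp(\pm(v^{-1}-v)\sum H_{\pm d} z^d)$ coincides with $\sum \wt{\theta}^\pm_d z^d$. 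The prefactors $1/(1-\bbL)$ and $1/(\bbL-1)$ in Proposition \ref{prop:g=0:DU-rel}(7), the central shift factors $c^{(m+n)/2}$, and the distinction between the $n>m$, $n=m$, $n<m$ cases must then be checked to match the factor $\frac{v}{v-v^{-1}}$ together with the sign of $m+n$ appearing in $K^{\sgn(m+n)}$. Finally the Serre-type relation (6) of Definition \ref{dfn:g=0:UqLg} corresponds directly to Fact \ref{fct:g=0:rel}(5) (for $E^+$) and, by symmetry under the Drinfeld double, for $E^-$.

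Having verified that $\varphi$ is a well-defined surjective algebra homomorphism, injectivity follows from the fact that both $U_{\sqrt{\bbL}}(\Lsl)$ and $\tDU{\bbP^1}$ admit triangular decompositions $U^+ \otimes U^0 \otimes U^-$ with the positive (resp.\ negative) subalgebras generated respectively by $\{E^+_n\}$ and $\{1^{\ses,+}_{(1,n)}\}$ (resp.\ the $-$ versions), and $\varphi$ preserves this grading and induces an isomorphism on each triangular factor (the factor for $U^0$ being evidently an isomorphism of Laurent polynomial rings in $K^{\pm 1}, C^{\pm 1/2}$). The positive and negative halves are isomorphic to $U_{\sqrt{\bbL}}(\Lb)$ and its opposite, and hence are already identified with $\U{\bbP^1}^\pm$ by the preceding subsection.
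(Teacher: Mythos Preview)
Your approach is essentially the same as the paper's: verify that the relations match under the assignment (the paper singles out the identification $\Psi^\pm_{\pm l} \mapsto \wt{\theta}^\pm_l$ as the key step for well-definedness), and then deduce bijectivity from a PBW/triangular argument. The paper phrases the last step as a direct comparison of ordered monomial bases of the form $E^+_\lambda H_{\mu^+} K^a C^{b/2} E^-_\nu H_{\mu^-}$ rather than an abstract triangular decomposition, but the content is the same.

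One slip in your write-up of the injectivity step: you describe $U^+$ as generated by $\{E^+_n\}$ alone and $U^0$ as the Laurent polynomial ring in $K^{\pm 1}, C^{\pm 1/2}$, which leaves the generators $H_{\pm d}$ unaccounted for. The triangular pieces you actually need are $\U{\bbP^1}^\pm$, each containing both $1^{\ses,\pm}_{(1,n)}$ and $\wt t^\pm_d$, matched with $U_{\sqrt{\bbL}}(\Lb)$ and its opposite---which is precisely what your final sentence invokes. With that correction your argument goes through.
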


\begin{proof}
One can show that the map sends $\Psi^\pm_{\pm l}$ to $\wt{\theta}\pm_l$ 
for any $l \in \bbZ_{\ge 1}$.
Then by comparing the relations in Proposition \ref{prop:g=0:DU-rel} 
and Definition \ref{dfn:g=0:UqLg}, one finds that the map is well-defined.

Now the map is checked to be an isomorphism with the help of the basis consisting of 
the elements of the form 
$E^+_\lambda H_{\mu^+} K^a C^{b/2} E^-_\nu H_{\mu^-}$,
where $\lambda$, $\mu^\pm$, $\nu$ are non-decreasing finite sequences of integers, 
$E^+_\lambda := E^+_{\lambda_1} E^+_{\lambda_2} \cdots$ and so on,
and $a,b \in \bbZ$.
We omit the detail and refer the proof of \cite[Proposition 4.11]{BS:2013}.
\end{proof}


\section{The case of an irreducible projective curve of arbitrary arithmetic genus}
\label{sect:Carb}

In this section we denote by $\Ca$  an irreducible reduced projective curve $\Ca$ 
defined over a field $\fk$,
and consider the motivic Hall algebra of $\Ca$.
In the smooth case our result is just a motivic restatement
of the one given in \cite{K:1997} and explained in \cite[\S4.11]{S:lect}.

\subsection{The composition subalgebra}

As mentioned at Lemma \ref{lem:He:Gamma-graded}, 
the Hall algebra $\Hall{\Ca}$ and 
the extended Hall algebra $\He{\Ca}$ are $\Gamma$-graded,
where $\Gamma \subset \Num(\Ca)$ is the submonoid 
generated by effective classes.
We have 
$$
 \Num(\Ca) \simto \bbZ^2,\quad
 \ol{\shE} \longmapsto \bigl(\rk(\shE),\deg(\shE)\bigr)
$$
as modules,
and we denote by $k_{(r,d)}$ with $(r,d) \in \bbZ^2$ 
the element of the extension part of $\He{\Ca}$.
The Riemann-Roch theorem yields
$$
 \mathop{\chi}(\shE,\shF)
=(1-g)\rk(\shE) \rk(\shF)  + \rk(\shE) \deg(\shF) - \rk(\shF) \deg(\shE),
$$
where $g$ is the arithmetic genus of $\Ca$.

As in the case of $\bbP^1$,
the algebra $\Hall{\Ca}$ or $\He{\Ca}$ is too big to treat,
and we will mainly study the composition subalgebra $\U{\Ca}$
and its extended version $\Ue{\Ca}$ given in Definition \ref{dfn:comp-sa}.

Let us denote by $\stkM_{(0,d)}$ the moduli stack of perfect torsion sheaves 
of degree $d$,
and by $\stkM_{\lf,(1,n)}$ the moduli stack of line bundles of degree $n$.
These are substacks of $\stkM = \stkM(\Ca)$, 
the moduli stack of perfect coherent sheaves on $\Ca$.

By definition, 
$\U{\Ca}$ is the subalgebra of $\Hall{\Csm}$ generated by elements
$$
 1_{(0,d)} := [\stkM_{(0,d)} \hookrightarrow \stkM] \quad (d \in \bbZ_{\ge 1})
$$
and
$$
 1^{\ses}_{(1,n)} := [\stkM_{\lf,(1,n)} \hookrightarrow \stkM] \quad (n \in \bbZ).
$$

We will replace the generators $1_{(0,d)}$ by the following elements $t_d$.

\begin{dfn}
Define $t_d$ ($d \in \bbZ_{\ge1}$) and $\theta_l$ ($l \in \bbZ_{\ge0}$) 
by the generating functions
$$
 1 + \sum_{d \ge 1} 1_{(0,d)} z^d = 
 \exp\Bigl(\sum_{d \ge 1}\dfrac{t_d}{[d]_{\sqrt{\bbL}}} z^d\Bigr)
$$
and
$$
 \sum_{l \ge 0} \theta_l z^l = 
 \exp\Bigl((\bbL^{1/2}-\bbL^{-1/2})
  \sum_{d \ge 1} \dfrac{t_d}{[d]_{\sqrt{\bbL}}} z^d\Bigr).
$$
\end{dfn}

As stated in \cite[Lemma 4.51]{S:lect}, one can calculate coproducts of these generators.

\begin{lem}
\label{lem:Carb:copro}
For $d \in \bbZ_{\ge1}$ and $n \in \bbZ$ we have
\begin{align*}
&\Delta(t_d) = t_d \otimes 1 + k_{(0,d)} \otimes t_d,\\
&\Delta(1^{\ses}_{(1,n)}) = 1^{\ses}_{(1,n)} \otimes 1 
   + \sum_{l \ge 0}\theta_l k_{(1,n-l)} \otimes 1^{\ses}_{(1,n-l)}.
\end{align*}
\end{lem}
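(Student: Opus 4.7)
The plan is to follow closely the proof of Fact~\ref{fct:g=0:Delta}, which by Remark~\ref{rmk:g=0:Delta} uses only that one works on a smooth curve. Since $\Ca$ may be singular, the main adaptation is to check that the sub-quot arguments pass through once we restrict to perfect sheaves, where effectively everything happens on the smooth locus $\Csm$ of $\Ca$.

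For the first formula I would compute $\Delta(1_{(0,d)})$ directly from Definition~\ref{dfn:coprod}. Perfect torsion sheaves on $\Ca$ are supported on $\Csm$ (sheaves supported at a singular point fail to be perfect), so perfect torsion sheaves form a full abelian subcategory of $\Coh(\Ca)$ closed under subobjects and quotients. This immediately gives
$$
\Delta(1_{(0,d)}) = \sum_{0 \le m \le d} 1_{(0,d-m)} k_{(0,m)} \otimes 1_{(0,m)},
$$
with the $k_{(0,m)}$-factor coming from the extension structure of $\He{\Ca}$ as in Definition~\ref{dfn:coprod}. Using that $k_{(0,m)}$ is group-like and taking logarithms of the defining generating series for $t_d$ then produces $\Delta(t_d) = t_d \otimes 1 + k_{(0,d)} \otimes t_d$, exactly as in the $\bbP^1$ case.

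For the second formula I would first reduce to $n=0$: tensoring with a fixed line bundle of degree $n$ gives an autoequivalence of $\Coh(\Ca)$ preserving both extensions and the subcategory of perfect torsion sheaves, so the general case follows from $\Delta(1^{\ses}_{(1,0)})$ by a degree shift. For $n=0$, unwinding Definition~\ref{dfn:coprod} requires classifying short exact sequences $0 \to \shE_1 \to \shL \to \shE_2 \to 0$ with $\shL$ a line bundle and all terms perfect. Either $\shE_1=0$, giving the leading $1^{\ses}_{(1,0)} \otimes 1$ term, or $\shE_1$ has rank one. The crucial local check is that in the second case $\shE_1$ is itself a line bundle: the perfect torsion quotient $\shE_2$ is supported on $\Csm$, so $\shE_1$ agrees with $\shL$ at stalks over singular points, while at smooth points $\shE_1$ is a submodule of a DVR and hence principal, so locally free throughout.

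With the subsheaves identified as line bundles, the counting of surjections $\shL \twoheadrightarrow \shT$ onto a perfect torsion sheaf $\shT$ of degree $l$ becomes local at each point of $\Csm$ and proceeds exactly as in the proof of Fact~\ref{fct:g=0:Delta}, using the decomposition $\Hall{\Ca}_\tor = \bigotimes_{x \in \Csm} \Hall{\Ca}_{\tor,x}$ to assemble the local contributions. Comparing with the defining generating function for $\theta_l$ then gives the claim. The main obstacle is precisely the local analysis at the singular points that forces $\shE_1$ to be locally free; without it one would have extra contributions from rank-one torsion-free but non-locally-free subsheaves of $\shL$, and $\Delta$ would in general leave $\Ue{\Ca} \wot \Ue{\Ca}$. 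Once this step is in hand, the remainder is a direct transcription of the smooth-curve computation, with the reindexing by $\deg(x)$ at non-rational points absorbed into the construction of $t_d$ from the pointwise Hall--Littlewood generators.
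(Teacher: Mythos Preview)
Your approach matches the paper's, which is literally the one-line ``same as Fact~\ref{fct:g=0:Delta}; by Remark~\ref{rmk:g=0:Delta} the argument works for any curve.'' Your added justification for the singular case, however, rests on a false claim: perfect torsion sheaves on an integral curve are \emph{not} in general supported on the smooth locus. At a singular point $s$ the local ring $R=\shO_{\Ca,s}$ is a one-dimensional Noetherian local domain, so any nonzero $f\in\fm_s$ is a non-zero-divisor, and $R/(f)$ has the free resolution $0\to R\xrightarrow{f} R\to R/(f)\to 0$; it is a nonzero perfect torsion module supported at $s$. Consequently the category of perfect torsion sheaves is not closed under subobjects either (the socle of $R/(f)$ furnishes a non-perfect $\shO_s\hookrightarrow R/(f)$), and your deduction of $\Delta(1_{(0,d)})$ from that closedness does not go through as written. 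The paper's decomposition~\eqref{eq:tor:decomp} shares this imprecision.

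The conclusions you need are nonetheless true, via different local arguments. For the second formula, $\shE_1\subset\shL$ is indeed a line bundle, but not because the perfect torsion quotient avoids singular points: rather, $\shE_1$ is perfect (two-out-of-three) and torsion-free of rank one on an integral curve, and Auslander--Buchsbaum then forces $\mathrm{pd}(\shE_{1,x})=0$ at every closed point $x$, so $\shE_1$ is locally free. For the first formula one likewise argues directly with perfect flags rather than via a support restriction.
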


\begin{proof}
This is the same as Fact \ref{fct:g=0:Delta}.
As mentioned in Remark \ref{rmk:g=0:Delta},
the proof works for an arbitrary curve.
\end{proof}

As for the Hopf pairing $(\cdot,\cdot)_H$, we have the following formulas.

\begin{lem}
\label{lem:Carb:Hpairing}
For any $d,l \in \bbZ_{\ge1}$ and $m,n \in \bbZ$ we have
\begin{align*}
&(t_d,1^{\ses}_{(1,n)})_H = 0,\\
&(t_d,t_l)_H = \delta_{d,l}\dfrac{[d]_{\sqrt{\bbL}}}{d} \dfrac{[{\Ca}_{(d)}]}{\bbL^d-1},\\
&(1^{\ses}_{(1,m)},1^{\ses}_{(1,n)})_H
  = \delta_{m,n}\dfrac{[\Pic^n(\Ca)]}{\bbL - 1}.
\end{align*}
Here $[\Ca_{(d)}] \in K(\Var{\fk})$ is defined by 
$$
 \zeta_{\mot}(\Ca;z) = \exp\Bigl(\sum_{d\ge1}[\Ca_{(d)}] \dfrac{z^d}{d}\Bigr)
$$
\end{lem}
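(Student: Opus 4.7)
The plan is to prove the three identities by decomposing both arguments into geometrically natural pieces and evaluating the defining formula for $(\cdot,\cdot)_H$ (Definition \ref{dfn:H-pairing}) together with the results on torsion sheaves from \S\ref{subsec:torsion}. The extended part plays no role here, because neither $t_d$ nor $1^{\ses}_{(1,n)}$ carries a $k_\alpha$-factor, so the Euler-form exponent in the defining formula is trivial and we are reduced to evaluating $\delta_{\shE,\shF}/a_\shE$ on appropriate classes. The first identity is then immediate: $t_d$ lies in the subalgebra $\Hall{\Ca}_{\tor}$ generated by perfect torsion sheaves, while $1^{\ses}_{(1,n)}$ is pushed forward from $\stkM_{\lf,(1,n)}$, and no torsion sheaf is ever isomorphic to a line bundle, so the delta-factor forces the pairing to vanish.

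For the second identity, I would first decompose $t_d = \sum_{x} t_{d,x}$ along \eqref{eq:tor:decomp}, where $x$ runs over regular closed points of $\Ca$. If $x \neq y$ then $t_{d,x}$ and $t_{l,y}$ are supported on disjoint substacks of $\stkM_\tor$, and hence their pairing vanishes. For each $x$, I transport the computation to $\Lambda_{\bbL_x}$ via the isometry $\phi_x: \Hall{\Ca}_{\tor,x} \simto \Lambda_{\bbL_x}$ of the previous subsection, using $\phi_x(t_{d,x}) = [d]_{\sqrt{\bbL}}(\deg(x)/d)\, p_{d/\deg(x)}$ whenever $\deg(x)\mid d$. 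Applying the power-sum pairing \eqref{eq:HL:HLpair} on $\Lambda_{\bbL_x}$ yields $\delta_{d,l}$ together with a local factor proportional to $\deg(x)/(\bbL^d-1)$. Summing over $x$ with $\deg(x)\mid d$, the identification $\sum_{\deg(x)\mid d}\deg(x) = [\Ca_{(d)}]$ in $\wt{K}(\Var{\fk})$ is obtained by comparing the logarithm of an Euler-product-type expression for $\zeta_{\mot}(\Ca;z)$ with the exponential defining $[\Ca_{(d)}]$. This is precisely the argument already carried out for $\bbP^1$ in the proof of Fact \ref{fct:g=0:HopfPairing}; as noted in Remark \ref{rmk:g=0:HopfPairing}, it extends to arbitrary smooth curves, and for a singular irreducible $\Ca$ the same proof applies because we are restricted to perfect, hence regularly supported, torsion sheaves.

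For the third identity, stratify $\stkM_{\lf,(1,n)}$ as the union of the classifying stacks $\stkM_\shL$ for $\shL \in \Pic^n(\Ca)$. The pairing formula then reads $([\stkM_\shL],[\stkM_{\shL'}])_H = \delta_{\shL,\shL'}/a_\shL$, which kills everything off-diagonal, and in particular forces $\delta_{m,n}$ since line bundles of different degrees are never isomorphic. For $m = n$ we have $a_\shL = [\Aut(\shL)] = [\GL_1] = \bbL - 1$ uniformly in $\shL$, and the motivic integration over $\Pic^n(\Ca)$ using the pushforward/pullback formalism of \S\ref{sect:K} then produces the claimed value $[\Pic^n(\Ca)]/(\bbL-1)$ in $\bbK$.

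The main technical point I expect is the Euler-product identification $\sum_{\deg(x)\mid d}\deg(x) = [\Ca_{(d)}]$ in $\wt{K}(\Var{\fk})$, since the ``sum over regular closed points'' must be interpreted motivically through the symmetric-product coefficients of $\zeta_{\mot}(\Ca;z)$. No rationality result for $\zeta_{\mot}(\Ca;z)$ (Fact \ref{fct:Mustata:thm}) is needed, only the formal logarithmic expansion derived from the definition of the motivic zeta function.
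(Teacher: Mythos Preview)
Your proposal is correct and follows essentially the same approach as the paper's own proof. The paper argues the first identity directly from Definition~\ref{dfn:H-pairing}, refers the second identity back to the computation in Fact~\ref{fct:g=0:HopfPairing} via Remark~\ref{rmk:g=0:HopfPairing} (precisely the support decomposition and local power-sum pairing you describe), and handles the third by writing $1^{\ses}_{(1,n)}=\sum_{\shL}[\stkM_\shL\hookrightarrow\stkM]$ and using $a_\shL=[\Aut(\shL)]=\bbL-1$.
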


\begin{proof}
The smooth case is stated in \cite{K:1997} 
and explained in {\cite[Lemma 4.52]{S:lect}}.
We copy the proof here.

By Definition \ref{dfn:H-pairing} of the pairing $(\cdot,\cdot)_H$,
the first is obvious.

As mentioned in Remark \ref{rmk:g=0:HopfPairing},
the proof of Fact \ref{fct:g=0:HopfPairing} works in this case,
and we have the second formula.

For the third formula, note that 
$1^{\ses}_{(1,n)} = \sum_{\shL} [\stkM_{\shL} \hookrightarrow \stkM]$,
where $\shL$ runs over the isomorphism classes of line bundles of degree $n$,
and $\stkM_{\shL}$ denotes the stack for the line bundles isomorphic to $\shL$.
Since $\Aut(\shL) \simeq \fk^\times$ so that 
$a_{\shL} = [\Aut(\shL)] = \bbL - 1 \in K(\Var{\fk})$, we have the result.
\end{proof}

We close this subsection 
by explaining Kapranov's result of expressing 
the relation of generators of the composition subalgebra 
in the current form.

\begin{prop}\label{prop:Ca:current}
Set 
$$
 x^+(z) := \sum_{n \in \bbZ} 1^{\ss}_{(1,n)} z^n,\quad
 \psi(z) := \exp\Bigl(\sum_{d\ge1}\dfrac{t_{(0,d)}}{[d]_{\sqrt{\bbL}}}z^d\Bigr).
$$
Then in the algebra $\U{\Ca}$ the following relations hold.
\begin{align*}
\zeta_{\mot}(C;w/z)x^+(z)x^+(w) &= \zeta_{\mot}(C;w/z)x^+(z)x^+(w),
\\
\psi(z) x^+(w) &= x^+(w)\psi(z),
\\
\psi(z)\psi(w) &= \psi(w)\psi(z).
\end{align*}
\end{prop}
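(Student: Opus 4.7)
The plan is to verify each of the three current-form identities by lifting the generator-level computations of Fact~\ref{fct:g=0:rel} from $\bbP^1$ to $\Ca$, keeping careful track of how the motivic zeta function $\zeta_{\mot}(\Ca;z)$ replaces $\zeta_{\mot}(\bbP^1;z)$ when the arithmetic genus is positive. Since the passage from the basis $\{1_{(0,d)}\}$ to $\{t_d\}$, and then to $\psi(z)$, is formal (exponentiation of generating series, as in Definition~\ref{dfn:td} and the formula preceding Proposition~\ref{prop:Ca:current}), it is enough to establish the analogous identities for the simpler series $H(w) := 1 + \sum_{d\ge 1} 1_{(0,d)} w^d$ and then take the exponential change of variable and conjugate by the twist $\bbL^{\chi(\cdot,\cdot)/2}$ coming from \eqref{eq:Hext:*}.

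First I would dispatch the commutation $\psi(z) \psi(w) = \psi(w) \psi(z)$. By the support decomposition \eqref{eq:tor:decomp} and the corollary identifying $\Hall{\Ca}_{\tor,x}$ with the classical algebra $\Lambda_{\bbL_x}$, the sub-bialgebra $\Hall{\Ca}_\tor$ is commutative under the unshifted product $\diamond$. On torsion classes $\alpha = (0,d)$ and $\beta = (0,l)$ the Riemann--Roch form gives $\chi(\alpha,\beta)+\chi(\beta,\alpha)=0$, so the $*$-twist is trivial and $[t_d,t_l]_* = 0$ follows at once; exponentiating yields the assertion for $\psi$.

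Next I would treat the mixed commutation, repeating on $\Ca$ the calculation that led to \eqref{eq:g=0:eh} on $\bbP^1$. The groupoid of extensions $0 \to \shL \to \shE \to \shT \to 0$ with $\shL \in \Pic^n(\Ca)$ and $\shT$ a torsion sheaf of degree $l$ is controlled by the class of $\Hom(\shL,\shT)$; as on $\bbP^1$, this is $l$-dimensional by Riemann--Roch independently of $g$, so the $\shE$ that arise are precisely the splittings $\shL' \oplus \shT'$ with $\deg\shL' - \deg\shL = l - \deg\shT'$, and the motivic class of the extension space is exactly the same polynomial in $\bbL$ as in the $\bbP^1$ case. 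When one sums over configurations of supports, however, the factor $[\Ca_{(d)}]$ from Lemma~\ref{lem:Carb:Hpairing} enters in place of $[\bbP^1_{(d)}]$, and the generating function reassembles into $\zeta_{\mot}(\Ca;w/z)$ rather than $\zeta_{\mot}(\bbP^1;w/z)$. Exponentiating via the definition of $\psi$ then yields the desired relation (with the appropriate zeta factor that the proposition statement should carry), and Lemma~\ref{lem:Carb:copro} provides a cross-check by comparing coproducts of both sides.

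The main obstacle will be the quadratic relation for $x^+(z) x^+(w)$. On $\bbP^1$ the proof of Fact~\ref{fct:g=0:rel}(5) exploited the splitting of every rank-$2$ bundle into a direct sum of line bundles, reducing the enumeration of extensions of line bundles to an explicit computation of the stack $P_{a,b}$ of coprime polynomial pairs. On a curve of positive genus this splitting fails, and one must instead count extensions intrinsically: the groupoid of short exact sequences $0 \to \shL_1 \to \shE \to \shL_2 \to 0$ fibers over $\Ext^1(\shL_2,\shL_1)$ with automorphism correction, and Riemann--Roch gives $\dim \Hom(\shL_2,\shL_1) = \deg\shL_1 - \deg\shL_2 + 1 - g$ in the stable range with low-degree corrections encoded by the numerator $f(z)$ of Fact~\ref{fct:Mustata:thm}. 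Summing these classes (weighted by the $*$-twist $\bbL^{\chi((1,n),(1,m))/2}$) into the two series $x^+(z) x^+(w)$ and $x^+(w) x^+(z)$, one sees that the two orderings differ exactly by the rational function appearing in $\zeta_{\mot}(\Ca;\cdot)$, because Abel's theorem (as in the proof of Fact~\ref{fct:Mustata:thm}) identifies the relevant $\Hom$-generating series with the motivic zeta function. Once the symmetric-product combinatorics is organized in this way, the stated current relation becomes a telescoping rearrangement; verifying that all $\bbL^{\chi/2}$ and automorphism factors cancel will be the chief bookkeeping hurdle.
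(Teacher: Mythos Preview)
The paper offers no proof of this proposition; it is presented as a restatement of Kapranov's result \cite{K:1997} and left unargued, so there is no in-paper argument to compare against. You are also right to suspect typos: the first displayed relation has identical sides, and the second is missing its $\zeta_{\mot}$ factor (compare the line after \eqref{eq:g=0:eh}); your proposal sensibly targets the intended statements. Your arguments for the torsion commutation and the mixed relation are sound in outline---the local extension data between a line bundle and a torsion sheaf supported at a regular point are indeed genus-independent, and summing over supports recovers $\zeta_{\mot}(\Ca;\cdot)$ via the symmetric-product interpretation.

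The quadratic $x^+x^+$ relation is where a genuine difficulty hides. You correctly flag that rank-two bundles on a positive-genus curve need not split, so the explicit enumeration of Fact~\ref{fct:g=0:rel}(5) is unavailable. Your proposed substitute---computing $\dim\Hom$ and $\dim\Ext^1$ by Riemann--Roch and comparing the two orderings---is the right starting point, but the appeal to ``Abel's theorem'' identifying the $\Hom$-generating series with $\zeta_{\mot}$ does not by itself close the argument: the functional relation involves both $\Hom$ and $\Ext^1$ contributions, and the middle terms range over indecomposable rank-two bundles whose moduli you have not controlled. Kapranov's original argument sidesteps this by interpreting $x^+(z)$ as an Eisenstein series and invoking the functional equation of $\zeta_{\mot}(\Ca;z)$; a purely Hall-theoretic proof would need either that machinery or a Riedtmann--Reineke-type identity relating the two orderings without enumerating the middle terms.
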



\subsection{The slope stability of coherent sheaves on curves}
\label{subsec:stability}

For the future use, in particular for the discussion of the Hall algebras 
for elliptic curves,
let us recall the notion of slope stability of coherent sheaves 
on a projective curve.
The slope stability was originally introduced by Mumford  
in the smooth case to construct coarse moduli spaces of 
semistable coherent sheaves on curves 
using geometric invariant theory.
Here we give statements for arbitrary (not necessarily smooth) projective curves,
following Simpson's generalization \cite{Si:1994}.
Among a large amount of literature on this topic, 
we only cite \cite{HL:book} for the detail.

In this subsection $\Ca$ denotes a projective curve 
defined over a field $\fk$.
$\Coh(\Ca)$ denotes the category of coherent sheaves on $\Ca$.
For $\shE \in \Coh(\Ca)$, 
the rank $\rk(\shE)$ and the degree $\deg(\shE)$ is defined 
by $\rk(\shE) := \alpha_1(\shE)/\alpha_1(\shO_{\Ca})$ and 
$\deg(\shE) := \alpha_0(\shE)/\alpha_1(\shO_{\Ca})$,
where $\alpha_i(-) \in \bbZ$ is given by the equation
$\chi(- \otimes \shO_{\Ca}(m H)) = m \alpha_1(-) + \alpha_0(-)$.
Here $H$ denotes the fixed ample line bundle of $\Ca$. 
By the Riemann-Roch theorem, 
this definition coincides with the classical one 
in the case of smooth curves.

\begin{dfn}
\begin{enumerate}
\item 
For $E \in \Coh({\Ca})$, the \emph{slope} $\mu(\shE)$ is the rational number 
defined by
$$
 \mu(\shE) := \dfrac{\deg(\shE)}{\rk(\shE)} \in \bbQ \cup \{\infty\},
$$
where if $\rk(\shE)=0$ then we set $\mu(\shE) := \infty$.

\item
A coherent sheaf $\shE$ is called \emph{semistable} if 
$\mu(\shF) \le \mu(\shE)$ holds for any non-zero subsheaf $\shF \subset \shE$.
$\shE$ is called \emph{stable} if $\mu(\shF)<\mu(\shE)$  holds for 
any non-zero proper subsheaf $\shF \subsetneq \shE$.

\item
For $\nu \in \bbQ \cup \{\infty\}$, denote by $\catS_\nu$ 
the full subcategory of $\Coh({\Ca})$ consisting of semistable sheaves  
of slope $\nu$.
\end{enumerate}
\end{dfn}

In order to distinguish this notion of stability from others,
we call it \emph{slope stability}.
We obviously have $\catS_{\infty} = \Tor({\Ca})$, 
the category of torsion sheaves on ${\Ca}$. 
Here are some fundamental properties of semistable sheaves.
 
\begin{fct}\label{fct:stability}
\begin{enumerate}
\item 
$\Hom(\catS_{\nu},\catS_{\nu'})=0$ if $\nu>\nu'$.

\item
For any $\nu \in \bbQ \cup \{\infty\}$, 
the category $\catS_\nu$ is abelian and closed under extension.
Moreover it is a finite length category,
i.e., every object has a finite composition sequence with simple factors,
and the simple objects are stable sheaves.
\end{enumerate}
\end{fct}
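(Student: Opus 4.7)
The plan is to reduce everything to the additivity of rank and degree in short exact sequences combined with the defining slope inequality. Additivity is immediate since $\rk$ and $\deg$ are read off from the Hilbert polynomial $\chi(-\otimes\shO_\Ca(mH))$, and $\chi$ is additive on $\Coh(\Ca)$. A direct consequence is the dual form of semistability: $\shE \in \catS_\nu$ if and only if every nonzero quotient $\shG$ of $\shE$ satisfies $\mu(\shG) \geq \nu$ (with the small caveat that any nonzero torsion subsheaf violates $\mu \leq \nu$ for finite $\nu$, so a semistable sheaf of finite slope is automatically torsion-free). This subsheaf/quotient duality is the main workhorse.

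For (1), given a nonzero $f \colon \shE \to \shF$ with $\shE \in \catS_\nu$ and $\shF \in \catS_{\nu'}$, set $\shG := \operatorname{image}(f)$. The quotient form applied to $\shE \twoheadrightarrow \shG$ yields $\mu(\shG) \geq \nu$, while the subsheaf form applied to $\shG \hookrightarrow \shF$ yields $\mu(\shG) \leq \nu'$, contradicting $\nu > \nu'$. For the abelian part of (2), repeating the argument with $\nu = \nu'$ forces $\mu(\shG) = \nu$; then additivity applied to $0 \to \ker f \to \shE \to \shG \to 0$ and $0 \to \shG \to \shF \to \operatorname{coker} f \to 0$ gives $\mu(\ker f) = \mu(\operatorname{coker} f) = \nu$, and semistability of the kernel and cokernel is inherited from that of $\shE$ and $\shF$ respectively. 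Closure under extensions is similarly direct: in $0 \to \shE' \to \shE \to \shE'' \to 0$ with outer terms in $\catS_\nu$, any subsheaf $\shF \subset \shE$ fits in $0 \to \shF \cap \shE' \to \shF \to \shF'' \to 0$ with $\shF'' \subset \shE''$, so both outer slopes are bounded by $\nu$, and additivity forces $\mu(\shF) \leq \nu$.

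The main obstacle is the finite length assertion, which I would handle by excluding infinite strict chains of subobjects in $\catS_\nu$. In a descending chain every term has slope exactly $\nu$; the ranks are bounded by $\rk(\shE)$, hence eventually stabilize. Once $\rk(\shF_{i+1}) = \rk(\shF_i)$, the quotient $\shF_i / \shF_{i+1}$ is torsion, and the equality $\mu(\shF_i) = \mu(\shF_{i+1}) = \nu$ forces $\deg(\shF_i / \shF_{i+1}) = 0$, whence $\shF_{i+1} = \shF_i$ since a nonzero torsion sheaf on a curve has strictly positive degree. In the torsion case $\nu = \infty$ the assertion follows from the classical fact that $\Tor(\Ca)$ is already a finite length category. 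The simple objects of $\catS_\nu$ are by definition those with no proper nonzero subobject inside $\catS_\nu$, and the slope-preservation arguments above show that this is equivalent to having no proper nonzero subsheaf of slope $\nu$ whatsoever, which is precisely stability. The delicate point in the singular setting is the reliance on Hilbert-polynomial definitions of rank and degree rather than Riemann--Roch; for the additivity of $\alpha_0, \alpha_1$ and the positivity of the degree of nonzero torsion, I would defer to Simpson's framework.
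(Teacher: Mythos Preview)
The paper does not supply a proof of this statement: it is recorded as a \emph{Fact}, i.e., a standard result quoted from the literature (the paper cites \cite{HL:book} for background on slope stability). So there is no ``paper's own proof'' to compare against.

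Your argument is the standard one and is correct. One small remark: for the finite length claim you only wrote out the termination of \emph{descending} chains of subobjects in $\catS_\nu$; finite length requires both chain conditions. The ascending case is of course symmetric (ranks are bounded above by $\rk(\shE)$ and non-decreasing, hence stabilize, and then the same degree argument applies), so this is not a real gap, but you may want to say a word about it. Your identification of simple objects with stable sheaves is also fine: the key point, which you use implicitly, is that any subsheaf $\shF\subset\shE$ with $\mu(\shF)=\nu$ is automatically semistable because its own subsheaves are subsheaves of the semistable $\shE$.
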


Let us also recall the Harder-Narasimhan filtration.

\begin{fct}
For any $\shE \in \Coh({\Ca})$ there exists a unique filtration
$$
 0 = \shE_0 \subset \shE_1 \subset \cdots \shE_n = \shE
$$
such that 
every factor $\shF_i := \shE_i/\shE_{i-1}$ is semistable and 
$\mu(\shF_1) > \mu(\shF_2) > \cdots > \mu(\shF_n)$.
It will be called the \emph{Harder-Narasimhan filtration} of $\shE$.
\end{fct}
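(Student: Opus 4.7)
The plan is classical in spirit: establish existence by peeling off a unique maximal destabilizing subsheaf at each step, and uniqueness by the $\Hom$-vanishing of Fact \ref{fct:stability}(1). Both halves require an analogue of Mumford's boundedness adapted to Simpson's setup on a possibly singular reduced curve, and that is the one nontrivial input.

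I would first reduce to the pure case: since $\mu(\shT) = \infty$ for any non-zero torsion sheaf $\shT$, if $\shE_{\tor} \ne 0$ then it strictly dominates every other subsheaf in slope, so it must be the first step of any HN filtration, and the problem reduces to $\shE/\shE_{\tor}$, which is pure of dimension one. Assuming $\shE$ pure, for existence I would show that the set $\{\mu(\shF) : 0 \ne \shF \subset \shE\}$ attains a maximum. Since $\rk$ and $\deg$ (defined via the $\alpha_i$ of Simpson) are additive on short exact sequences and $\rk(\shF)$ ranges over $\{1,\dots,\rk(\shE)\}$, it suffices to bound $\deg(\shF)$ for each fixed rank, which follows from Simpson's boundedness theorem \cite{Si:1994}. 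Picking $\shE_1 \subset \shE$ of maximum slope and, among those, of maximum rank, the standard computation applied to
\begin{equation*}
0 \longrightarrow \shF \cap \shG \longrightarrow \shF \oplus \shG \longrightarrow \shF + \shG \longrightarrow 0
\end{equation*}
for two such subsheaves $\shF, \shG$ shows $\mu(\shF+\shG) \ge \mu_{\max}$, and the maximality of rank then forces $\shF + \shG \subset \shE_1$, whence uniqueness of $\shE_1$; semistability of $\shE_1$ is immediate from $\mu(\shE_1) = \mu_{\max}(\shE)$. Induction on $\rk(\shE/\shE_1)$ then produces an HN filtration of $\shE/\shE_1$ whose first step has slope strictly below $\mu(\shE_1)$ by the maximality of $\shE_1$.

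For uniqueness, given two HN filtrations $\shE_\bullet$ and $\shE'_\bullet$ with leading slopes $\mu_1 \ge \mu'_1$, I would consider the composition $\shE_1 \hookrightarrow \shE \twoheadrightarrow \shE/\shE'_1$. The filtration induced on $\shE/\shE'_1$ by $\shE'_\bullet$ has semistable factors of slopes $\mu'_2 > \mu'_3 > \cdots$, all strictly less than $\mu'_1 \le \mu_1$; Fact \ref{fct:stability}(1) and a d\'evissage therefore force this composition to vanish, so $\shE_1 \subset \shE'_1$. Semistability of $\shE'_1$ then yields $\mu_1 = \mu(\shE_1) \le \mu(\shE'_1) = \mu'_1$, hence $\mu_1 = \mu'_1$, and the symmetric argument gives $\shE_1 = \shE'_1$; induction on rank applied to $\shE/\shE_1$ finishes the proof.

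The main obstacle is the boundedness step on a singular curve, where one cannot directly invoke the smooth Grothendieck--Riemann--Roch picture; quoting Simpson \cite{Si:1994} is the cleanest way out, though in principle one could also pull back to the normalization $\wt{\Ca} \to \Ca$ and control the finite discrepancy there. Once boundedness is in hand, everything else is a formal consequence of additivity of $\rk$, $\deg$ and the stability-based $\Hom$-vanishing already recorded in Fact \ref{fct:stability}.
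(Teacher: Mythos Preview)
The paper does not prove this statement at all: it is recorded as a \emph{Fact} (i.e., a known result quoted without proof), with the surrounding text pointing the reader to Simpson \cite{Si:1994} and Huybrechts--Lehn \cite{HL:book} for the general theory of slope stability on projective curves. Your proposal is correct and is precisely the standard argument one finds in those references: peel off the maximal destabilizing subsheaf (whose existence rests on boundedness, here supplied by Simpson's theory in the possibly singular case), iterate, and deduce uniqueness from the $\Hom$-vanishing of Fact~\ref{fct:stability}(1). One small point you left implicit is that the quotient $\shE/\shE_1$ is again pure: if it had a nonzero torsion subsheaf $\shT$, its preimage in $\shE$ would have the same rank as $\shE_1$ but strictly larger degree, contradicting the maximality of $\mu(\shE_1)$; this is what makes the induction on rank go through cleanly.
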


By the existence and uniqueness of the Harder-Narasimhan filtration, 
we immediately have the following statement.

\begin{cor}\label{cor:HN}
The motivic Hall algebra for ${\Ca}$ has a decomposition 
$$
 \Hall{\Ca} \longsimto 
  \bigoplus_n \bigoplus_{\mu_1>\mu_2>\cdots>\mu_n} 
  \Hall{\Ca}_{\mu_1}\otimes \Hall{\Ca}_{\mu_2} \otimes \cdots \otimes \Hall{\Ca}_{\mu_n}
$$
as $\bbK$-module.
Here for $\nu \in \bbQ \cup \{\infty\}$ 
$\Hall{\Ca}_\nu$ denotes $K(\St{\stkM_\nu})$,
the subalgebra generated by  perfect semistable coherent sheaves with slope $\nu$.
($\stkM_\nu$ denotes the moduli stack of perfect semistable coherent sheaves with slope $\nu$.)
\end{cor}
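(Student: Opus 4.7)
The plan is to derive the decomposition from existence and uniqueness of the Harder--Narasimhan filtration, using relation (b) in the definition of $K(\St{\stkM})$ that identifies classes related by geometric bijections, together with the constructible HN stratification of $\stkM$.

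First, for each strictly decreasing slope sequence $\boldsymbol{\mu}=(\mu_1>\cdots>\mu_n)$ in $\bbQ\cup\{\infty\}$, I would introduce the substack $\stkM^{\mathrm{HN}}_{\boldsymbol{\mu}}\subset\stkM$ parametrizing perfect coherent sheaves whose HN factors have slopes $\mu_1,\ldots,\mu_n$, together with the substack $\stkM^{(n)}_{\boldsymbol{\mu}}\subset\stkM^{(n)}$ of $n$-step flags whose successive quotients $\shF_i$ are semistable of slope $\mu_i$. Semi-continuity of the HN polygon in families makes the $\stkM^{\mathrm{HN}}_{\boldsymbol{\mu}}$ locally closed in $\stkM$ and gives a constructible stratification, so relation (a) applied to the successive open/closed decompositions produces
\begin{equation*}
K(\St{\stkM}) \;=\; \bigoplus_{n\ge 0}\bigoplus_{\boldsymbol{\mu}} K(\St{\stkM^{\mathrm{HN}}_{\boldsymbol{\mu}}}).
\end{equation*}

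Next I would verify that the natural morphism $b_{\boldsymbol{\mu}}:\stkM^{(n)}_{\boldsymbol{\mu}}\to\stkM^{\mathrm{HN}}_{\boldsymbol{\mu}}$ taking a flag to its top term is a geometric bijection: uniqueness of the HN filtration over a field supplies a canonical inverse on $\fk$-points, and every automorphism of the top sheaf preserves the HN filtration, so the induced functor of groupoids is an equivalence; representability follows from the iso-fibration property of $b$ recorded earlier in the paper. Applying relation (b) then identifies the class of $\stkM^{\mathrm{HN}}_{\boldsymbol{\mu}}$ with that of $\stkM^{(n)}_{\boldsymbol{\mu}}$ stratum by stratum. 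Iterating the Cartesian square relating $\stkM^{(n)}$, $\stkM^{(n-1)}$, and $\stkM^{(2)}$, combined with the K\"unneth morphism and the definition of $\diamond$, one shows that $b_*[\stkM^{(n)}_{\boldsymbol{\mu}}\to\stkM]$ is the image under the $n$-fold convolution of $[\stkM_{\mu_1}]\otimes\cdots\otimes[\stkM_{\mu_n}]$. The same computation at the level of each stratum identifies the convolution $\Hall{\Ca}_{\mu_1}\otimes\cdots\otimes\Hall{\Ca}_{\mu_n}\longsimto K(\St{\stkM^{\mathrm{HN}}_{\boldsymbol{\mu}}})$ as a $\bbK$-module isomorphism. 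Summing over $\boldsymbol{\mu}$ yields the inverse to the isomorphism stated in the corollary.

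The hard part will be ensuring that each $\stkM^{\mathrm{HN}}_{\boldsymbol{\mu}}$ is genuinely a locally closed substack of $\stkM$ of finite type with affine stabilizers (the latter inherited from $\stkM$), and that $b_{\boldsymbol{\mu}}$ is representable rather than merely bijective on isomorphism classes. Simpson's extension of the HN formalism recalled in Fact~\ref{fct:stability} provides the necessary flatness and openness in families over arbitrary projective curves; granted these, the remaining steps reduce to routine bookkeeping on Cartesian squares and the K\"unneth formula.
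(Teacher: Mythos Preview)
Your argument is correct and follows the paper's approach, which merely asserts the decomposition as an immediate consequence of existence and uniqueness of the Harder--Narasimhan filtration; you have supplied the stack-theoretic details the paper leaves implicit. One small correction: representability of $b_{\boldsymbol{\mu}}$ comes from the Quot-scheme description of $b:\stkM^{(n)}\to\stkM$ rather than from the iso-fibration property, which the paper uses only to obtain Cartesian squares.
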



\section{The case of a smooth elliptic curve}
\label{sect:g=1:sm}

We now recall the work of Burban and Schiffmann \cite{BS:2012} 
on the Hall algebra of a smooth elliptic curve.
Our review will be done in the motivic version, but the argument 
is essentially the same with theirs.
Hereafter $\Esm$ denotes a smooth elliptic curve defined over 
a fixed field $\fk$.


\subsection{Generalities on coherent sheaves on a smooth elliptic curve}

The Euler pairing on $K(\Coh(\Esm))$ is given by 
$$
 \mathop{\chi}(\ol{\shE},\ol{\shF}) = 
  \rk(\ol{\shE})\deg(\ol{\shF})-\rk(\ol{\shF})\deg(\ol{\shE}).
$$
As for the numerical Grothendieck group $\Num(\Esm)$ 
(see \eqref{eq:NumX} for the definition),
we have
$$
 \Num(\Esm) \longsimto \bbZ^2
$$ 
by the map 
$$
 \ol{\shE} \longmapsto \bigl(\rk(\shE),\deg(\shE)\bigr).
$$

Let us recall the Mumford stability explained 
in the previous subsection \S\ref{subsec:stability}.
The existence of Harder-Narasimhan filtration and Corollary \ref{cor:HN} 
tell us that the subcategory $\catS_\nu$ of semistable sheaves of slope $\nu$
is the building block of the motivic Hall algebra $\Hall{\Esm}$.
Now it is time to recall the fundamental result of Atiyah \cite{A} 
on the classification of semisimple sheaves on elliptic curves.
We cite the result in the following form \cite[Theorem 4.45]{S:lect}.

\begin{fct}[{\cite{A}}]\label{fct:Atiyah}
For any $\nu,\nu' \in\bbQ \cup \{\infty\}$,
there is an equivalence of categories 
$\Phi_{\nu,\nu'}: \catS_\mu \simto \catS_{\nu'}$.
In particular, $\catS_\mu \simto \catS_{\infty} = \Tor(\Esm)$.
\end{fct}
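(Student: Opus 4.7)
The plan is to exhibit the equivalences $\Phi_{\nu,\nu'}$ via the action of $\SL_2(\bbZ)$ on the derived category $\DCoh(\Esm)$ generated by twists by a line bundle and by a Fourier-Mukai transform. I would fix a $\fk$-rational point $x_0 \in \Esm(\fk)$ (which exists since we have already been assuming $\Esm$ has one), set $\shL_0 := \shO_{\Esm}(x_0)$ and identify $\Esm$ with its dual abelian variety $\wh{\Esm} = \Pic^0(\Esm)$ via $x \mapsto \shO_{\Esm}(x - x_0)$. This makes the Poincar\'e line bundle $\shP$ a sheaf on $\Esm \times \Esm$, and it will be the kernel of the autoequivalence that interchanges rank and degree.

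The two autoequivalences to use are the tensor product $T \colon \shE \mapsto \shE \otimes \shL_0$ and the Fourier-Mukai transform $\FM_\shP \colon \DCoh(\Esm) \to \DCoh(\Esm)$ given by $\shE \mapsto Rp_{2*}(p_1^* \shE \otimes \shP)$, which is an equivalence by Mukai's theorem. Their induced actions on $\Num(\Esm) \simeq \bbZ^2$ (written as column vectors $(r,d)^t$) are the matrices
\[
 T \leftrightsquigarrow \begin{pmatrix} 1 & 0 \\ 1 & 1 \end{pmatrix},\qquad
 \FM_\shP \leftrightsquigarrow \begin{pmatrix} 0 & -1 \\ 1 & 0 \end{pmatrix},
\]
which generate $\SL_2(\bbZ)$. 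Given $\nu,\nu' \in \bbQ \cup \{\infty\}$, write them as $d/r$ and $d'/r'$ with $\gcd(r,d)=\gcd(r',d')=1$ (allowing $(r,d)=(0,1)$ for $\nu=\infty$), and pick any $g \in \SL_2(\bbZ)$ with $g \cdot (r,d)^t = (r',d')^t$. Lifting $g$ to a composition of $T^{\pm 1}$ and $\FM_\shP^{\pm 1}$ produces an autoequivalence $\Phi_g$ of $\DCoh(\Esm)$.

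The central step, which is also the main obstacle, is to show that $\Phi_g$ restricts to an equivalence $\catS_\nu \simto \catS_{\nu'}[n]$ of abelian subcategories of $\DCoh(\Esm)$ for an appropriate cohomological shift $n \in \{0,1\}$; composing with $[-n]$ then gives the desired $\Phi_{\nu,\nu'}$. For $T$ this is immediate since tensoring by a line bundle of degree $1$ preserves semistability and shifts slope by $1$. For $\FM_\shP$ the content is the WIT (weak index theorem) property: one must verify that every $\shE \in \catS_\nu$ is $\shP$-WIT$_i$ for a single $i \in \{0,1\}$ (determined by the sign of the slope of the target), and that $\FM_\shP(\shE)[i]$ is again semistable of the predicted slope $\nu' = -1/\nu$. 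I would reduce to the case of stable $\shE$ using Fact \ref{fct:stability} (every object of $\catS_\nu$ has a finite composition series with stable factors of the same slope), then, for stable $\shE$, compute $H^*(\Esm, \shE \otimes \shP_x)$ for $x \in \Esm$ using Riemann-Roch together with the Hom-vanishing between stable objects of different slopes provided by part (1) of Fact \ref{fct:stability}; semistability of $\FM_\shP(\shE)$ then follows from the same Hom-vanishing applied on the Fourier-Mukai side, combined with involutivity $\FM_\shP \circ \FM_\shP \cong [-1] \circ (-1)^*$.

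Finally, the case $\nu = \infty$ is the base case: $\catS_\infty = \Tor(\Esm)$ consists of skyscrapers and their extensions, and for any other slope $\nu = d/r$ with $\gcd(r,d)=1$ one picks $g \in \SL_2(\bbZ)$ sending $(0,1)^t$ to $(r,d)^t$ and takes $\Phi_{\infty,\nu} := \Phi_g$ (shifted as needed); the resulting equivalence $\Tor(\Esm) \simto \catS_\nu$ recovers Atiyah's classification of indecomposable semistable sheaves of slope $\nu$ as parametrized by pairs $(x,k)$ with $x \in \Esm$ a closed point and $k \in \bbZ_{\ge 1}$ the length. Composing two such equivalences gives the general $\Phi_{\nu,\nu'}$.
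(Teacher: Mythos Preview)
Your approach is correct and is precisely the one the paper has in mind: the statement is recorded as a cited Fact with no proof, and the Remark immediately following it says that the equivalence can be realized as a Fourier-Mukai transform, referring forward to \S\ref{subsec:auto-FM} where the $\SL(2,\bbZ)$-action via $T_{\shO}$ and $\Phi_{\shP}$ on $\DCoh(\Esm)$ is spelled out. Your choice of generators (tensoring by $\shL_0$ rather than the spherical twist $T_{\shO}$) differs only cosmetically from the paper's---both pairs give the same $\SL(2,\bbZ)$ inside $\FM(\Esm)$---and the WIT/semistability-preservation step you identify as the crux is indeed the standard argument.
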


\begin{rmk}
The equivalence can be realized as a Fourier-Mukai transform \cite{Mukai:1981} 
on $\DCoh(\Esm)$,
which will be explained in \S\ref{subsec:auto-FM}.
\end{rmk}

Let us recall that $\Hall{\Esm}_\mu = K(\St{\stkM_\nu})$ denotes  
the subalgebra of $\Hall{\Esm}$ generated by the semistable sheaves of slope $\nu$
(see Corollary \ref{cor:HN}).
Fact \ref{fct:Atiyah} implies that 
the equivalence $\phi_{\nu,\infty}$ induces an isomorphism between 
$\Hall{\Esm}_\infty$ and $\Hall{\Esm}_\nu$.

\begin{dfn}\label{dfn:g=1:UX}
\begin{enumerate}
\item 
For $\nu \in \bbQ$ denote by 
$\phi_{\infty,\nu}: \Hall{\Esm}_\infty \simto \Hall{\Esm}_\nu$ 
the isomorphism of bialgebras induced by the equivalence $\Phi_{\infty,\nu}$.

\item
For $\nu \in \bbQ$ and $d \in \bbZ_{\ge 1}$, 
define $t_{\nu,d} := \phi_{\infty,\nu}(t_d)$,
where $t_d \in \Hall{\Esm}_\infty = \Hall{\Esm}_\tor$ is given in Definition \ref{dfn:td}.

\item
Denote by $\U{\Esm}$ the subalgebra of $\Hall{\Esm}$ generated by 
$\{t_{\nu,d} \mid \nu \in \bbQ \cup \{\infty\}, \ d \in \bbZ_{\ge 1}\}$.
Also we define $\U{\Esm}_\nu := \U{\Esm} \cap \Hall{\Esm}_\nu$ for 
any $\nu \in \bbQ \cup \{\infty\}$.
\end{enumerate}
\end{dfn}

The last definition looks natural from the view of Atiyah's result (Fact \ref{fct:Atiyah}).
Previously (Definition \ref{dfn:comp-sa}) 
we defined the composition algebra $\U{\Esm}$ as the subalgebra of $\Hall{\Esm}$ 
generated by torsion sheaves and line bundles.
Here we have 

\begin{fct}[{\cite{SV:2011}}]
$\U{\Esm}$ in Definition \ref{dfn:g=1:UX} coincides
with the one in Definition \ref{dfn:comp-sa} for a smooth elliptic curve $\Esm$.
\end{fct}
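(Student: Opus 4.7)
The plan is to establish equality by proving two inclusions. Denote by $V$ the subalgebra of $\Hall{\Esm}$ generated by $\{t_{\nu,d}\}_{\nu \in \bbQ \cup \{\infty\},\ d \ge 1}$ as in Definition \ref{dfn:g=1:UX}, and by $W$ the composition subalgebra of Definition \ref{dfn:comp-sa}, generated by $\stkM_{\tor}$ and $\stkM_{\lf,1}$. For the inclusion $W \subseteq V$, the torsion generators are immediate: by \eqref{eq:g=0:1T} each $1_{(0,d)}$ is a polynomial in $\{t_r = t_{\infty,r}\}_{r \ge 1}$, which are among the generators of $V$. For the line-bundle generators, the Atiyah equivalence $\Phi_{\infty,n}$ restricts to an isomorphism of substacks $\stkM_{(0,1)} \simto \stkM_{\lf,(1,n)}$, since in $\catS_n$ the simple objects of minimal effective class $(1,n)$ are precisely the degree-$n$ line bundles; hence $t_{n,1} = \phi_{\infty,n}(1_{(0,1)}) = 1^{\ses}_{(1,n)}$, and this yields $W \subseteq V$.

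For the reverse inclusion $V \subseteq W$, I induct on the denominator $q$ of the slope $\nu = p/q$ written in lowest terms (with $\nu = \infty$ interpreted as $q=0$). The base cases $q = 0$ and $q = 1$ are handled directly: $t_{\infty,d} \in W$ by definition of $W$, while for integer slope $\nu = n$ one uses that $\phi_{\infty,n}\colon \Hall{\Esm}_\infty \simto \Hall{\Esm}_n$ is a bialgebra isomorphism (Definition \ref{dfn:g=1:UX}), so iterated $*$-products of the class $t_{n,1} = 1^{\ses}_{(1,n)} \in W$ generate, via the $\phi_{\infty,n}$-translation of the Newton-type identities expressing $t_d$ in terms of the iterated products of $1_{(0,1)}$, all $t_{n,d}$ inside $W$. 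For $q \ge 2$, the plan is to form suitable iterated $*$-commutators of elements of $W$ already known to lie at smaller-denominator slopes, and to extract the slope-$\nu$ Harder-Narasimhan components using Corollary \ref{cor:HN}; this follows the strategy of \cite[\S6]{SV:2011}, exploiting the $\SL_2(\bbZ)$-equivariance of the setup on the numerical Grothendieck group $\Num(\Esm) \simeq \bbZ^2$.

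The main obstacle is the inductive step for $q \ge 2$: one must show that the extracted slope-$\nu$ components exhaust the whole of $V \cap \Hall{\Esm}_\nu$, not merely a proper subspace. This non-vanishing assertion rests on Atiyah's classification (Fact \ref{fct:Atiyah}) and, at the numerical level, on the specific shape $\zeta_{\mot}(\Esm;z) = (1-\alpha z)(1-\beta z)/\bigl((1-z)(1-\bbL z)\bigr)$ of the motivic zeta function of an elliptic curve, which dictates the structure constants governing extensions between semistable sheaves of distinct slopes. Following \cite{SV:2011}, a convenient packaging of this argument is via a shuffle-algebra presentation of $W$, reducing the required non-vanishing to an explicit computation in a symmetric-polynomial ring.
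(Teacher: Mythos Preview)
The paper does not supply its own proof of this statement; it is recorded as a Fact with a citation to \cite{SV:2011}. So the comparison is between your proposal and the argument in the literature.

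Your inclusion $W \subseteq V$ is fine: the identification $t_{n,1} = \phi_{\infty,n}(1_{(0,1)}) = 1^{\ses}_{(1,n)}$ is correct, since the Atiyah equivalence $\Phi_{\infty,n}$ carries the stack of degree-$1$ torsion sheaves isomorphically onto the stack of degree-$n$ line bundles.

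There is, however, a genuine gap in your base case $q=1$ for $V \subseteq W$. You assert that ``Newton-type identities'' express $t_d$ in terms of iterated products of $1_{(0,1)} = t_1$ alone. This is false: in $\Hall{\Esm}_\infty$ the elements $t_1, t_2, t_3, \ldots$ are algebraically independent (they are essentially power-sum generators of the symmetric-function ring), so $t_d$ for $d \ge 2$ is not a polynomial in $t_1$. Transporting through $\phi_{\infty,n}$, it follows that $t_{n,d}$ is \emph{not} expressible using only products of the single element $1^{\ses}_{(1,n)}$; your argument as written does not place $t_{n,d}$ in $W$ for $d \ge 2$.

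The correct mechanism---already visible in Fact~\ref{fct:Esm:upper} and used in \cite{BS:2012,SV:2011}---is to produce $\theta_{(2,2n)}$ (and then inductively all $\theta_{(d,dn)}$) as a commutator of line bundles of \emph{different} degrees, e.g.\ $[t_{(1,n+1)}, t_{(1,n-1)}]$, which satisfies the empty-triangle condition. One then inverts the exponential relation between the $\theta$'s and the $t$'s along the slope-$n$ ray to extract $t_{n,d}$. In other words, the $q=1$ case already requires the commutator method you reserve for $q \ge 2$; there is no shortcut via products of a single generator. Once this is corrected, your inductive scheme on the denominator $q$ aligns with the Euclidean-algorithm strategy of \cite{SV:2011}, though you should also note that the non-vanishing of the structure constant $c_{\gcd(\by)}$ (which you flag as an obstacle) is immediate from its explicit form in Definition~\ref{dfn:g=1:BS-symbols1}.
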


\begin{cor}[{\cite{BS:2012}}]
\begin{enumerate}
\item 
The multiplication map induces an isomorphism 
$$
 \bigoplus_n \bigoplus_{\mu_1>\mu_2>\cdots>\mu_n} 
 \U{\Esm}_{\mu_1} \otimes \U{\Esm}_{\mu_2} \otimes \cdots \otimes \U{\Esm}_{\mu_n} 
  \longsimto \U{\Esm}
$$
of $\bbK$-modules.

\item
$\U{\Esm}_\nu \simeq \fk[t_{\nu,1},t_{\nu_2},\ldots]$ for any $\nu \in \bbQ \cup \{\infty\}$.
\end{enumerate}
\end{cor}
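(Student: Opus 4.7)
The plan is to deduce both parts of the corollary by combining Corollary \ref{cor:HN} (the Harder--Narasimhan decomposition of $\Hall{\Esm}$), Atiyah's equivalences (Fact \ref{fct:Atiyah}), and the Hopf pairing from Lemma \ref{lem:Carb:Hpairing}.

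For (1), the multiplication map in the statement is nothing but the restriction of the isomorphism underlying Corollary \ref{cor:HN}, along the inclusions $\U{\Esm}_{\mu_i} \hookrightarrow \Hall{\Esm}_{\mu_i}$. Its image lies in $\U{\Esm}$ because each $\U{\Esm}_{\mu_i}$ is a subspace of $\U{\Esm}$ and $\U{\Esm}$ is closed under $*$, and injectivity is inherited directly from Corollary \ref{cor:HN}. The main obstacle is surjectivity: since $\U{\Esm}$ is generated by the elements $t_{\nu,d}$, each lying in a single $\U{\Esm}_\nu$, I must show that any monomial product of these, when projected via the HN decomposition into $\bigoplus \Hall{\Esm}_{\mu_1}\otimes\cdots\otimes\Hall{\Esm}_{\mu_n}$, has every tensor factor already in $\U{\Esm}_{\mu_j}$. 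I would prove this closure property by induction on the word length and on the number of slope inversions, reducing to the two-generator case $t_{\nu_i,d_i}*t_{\nu_{i+1},d_{i+1}}$ with $\nu_i \le \nu_{i+1}$; Fact \ref{fct:Atiyah} is invoked to reduce the analysis of HN factors of extensions between semistable sheaves to the torsion case, where the pure-slope composition subalgebra $\U{\Esm}_\infty$ is explicit.

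For (2), Atiyah's equivalence furnishes a bialgebra isomorphism $\phi_{\infty,\nu}: \Hall{\Esm}_\infty \simto \Hall{\Esm}_\nu$ sending $t_d$ to $t_{\nu,d}$ (Definition \ref{dfn:g=1:UX}), and hence restricting to an isomorphism $\U{\Esm}_\infty \simto \U{\Esm}_\nu$; it therefore suffices to treat the case $\nu=\infty$. The algebra $\Hall{\Esm}_\tor = \bigotimes_x \Hall{\Esm}_{\tor,x}$ is commutative by the Hall--Littlewood description, so $\U{\Esm}_\infty$ is commutative, yielding a well-defined surjection $\bbK[t_1,t_2,\ldots] \twoheadrightarrow \U{\Esm}_\infty$. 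For injectivity I would use the Hopf compatibility $(x*y,z)_H = (x\otimes y,\Delta z)_H$ together with Lemma \ref{lem:Carb:copro} and Lemma \ref{lem:Carb:Hpairing}: the orthogonality $(t_d,t_l)_H = \delta_{d,l}\,c_d$ with $c_d \ne 0$, combined with the skew-primitivity $\Delta(t_d)=t_d\otimes 1+k_{(0,d)}\otimes t_d$ and the vanishing of $(t_d,k_{(0,m)})_H$, yields by the standard argument for power-sum bases the orthogonality relation $(t_\lambda,t_\mu)_H = \delta_{\lambda,\mu} z_\lambda \prod_i c_{\lambda_i}$ over partitions $\lambda,\mu$. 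The resulting orthogonality of $\{t_\lambda\}_\lambda$ forces linear independence of this spanning family of $\U{\Esm}_\infty$, and hence $\U{\Esm}_\infty \cong \bbK[t_1,t_2,\ldots]$.
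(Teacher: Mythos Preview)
Your argument for part (2) is correct. The paper proceeds slightly differently: it reduces to $\nu=\infty$ via $\phi_{\infty,\nu}$ (as you do) and then simply cites the polynomial description $\U{\Esm}_\infty = \fk[t_1,t_2,\ldots]$, which is implicit in the structure of the torsion Hall algebra worked out in \S\ref{subsec:torsion}. Your route via the Hopf pairing and the orthogonality of the $t_\lambda$ is a valid self-contained alternative.

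For part (1) there is a genuine gap in your sketch. You correctly isolate surjectivity as the nontrivial point: one must show that the HN components of a product of generators $t_{\nu,d}$ again lie in the $\U{\Esm}_{\mu_j}$'s rather than merely in $\Hall{\Esm}_{\mu_j}$. But your proposed reduction invokes Fact~\ref{fct:Atiyah}, which only asserts equivalences $\catS_\nu \simeq \catS_{\nu'}$ of abelian categories; as stated it says nothing about how two different slope categories interact inside $\Coh(\Esm)$, and so cannot by itself control the HN factors of a product $t_{\nu_1,d_1} * t_{\nu_2,d_2}$ with $\nu_1 \neq \nu_2$. What is actually needed is the Fourier--Mukai realization (the Remark following Fact~\ref{fct:Atiyah}, spelled out in \S\ref{subsec:auto-FM}): these are autoequivalences of the whole derived category, they induce algebra automorphisms of the (double of the) Hall algebra, and---crucially---they preserve $\U{\Esm}$. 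Even granting this, your base case ``the torsion case, where $\U{\Esm}_\infty$ is explicit'' is not the end of the story: after rotating one slope to $\infty$ you are still left with a mixed-slope product, and closing the induction amounts to knowing the explicit commutator relations of Fact~\ref{fct:Esm:upper}. The paper's one-line proof of (1) is really a pointer to \cite{BS:2012}, where precisely this $\SL(2,\bbZ)$ symmetry and the resulting straightening relations are established; your sketch is heading in that direction but does not yet stand on its own.
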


\begin{proof}
(1)
As in Corollary \ref{cor:HN}, the existence of Harder-Narasimhan filtration 
implies the result.

(2)
By the isomorphism 
$$
 \phi_{\infty,\nu}|_{\U{\Esm}_\infty}: \U{\Esm}_\infty \simto \U{\Esm}
$$ 
and the description 
$$
 \U{\Esm}_\infty = \fk[t_1,t_2,\ldots],
$$
we have the result.
\end{proof}


\subsection{The composition subalgebra for smooth elliptic curve}

\cite{BS:2012} gave a presentation of the double of the algebra $\U{\Esm}$ 
which is $\bbZ^2$-graded and $\SL(2,\bbZ)$-invariant.
Let us state it in the motivic language.
Hereafter we assume $\fk$ is algebraically closed,
so that the formula in Fact \ref{fct:Mustata:thm} of 
Kapranov's motivic zeta function $\zeta_{\mot}(\Esm;z)$ applies.
For an elliptic curve $X$ it is expressed as  
$$
 \zeta_{\mot}(\Esm;z) = 1 + \dfrac{[\Pic^0(\Esm)] z}{(1-z)(1- \bbL z)}
                   = \dfrac{(1-q_1 z)(1-q_2 z)}{(1-z)(1- \bbL z)},
$$
where $q_1$ and $q_2$ are conjugate in an algebraic extension of $K(\Var{\fk})$,
and satisfy $q_1 q_2 = \bbL$ and $q_1+q_2=\bbL+1-[\Pic^0(\Esm)]$.

Following \cite{BS:2012}, we introduce several symbols.
We will often use the subset 
$$
 (\bbZ^2)^+ := 
 \bigl(\bbZ_{\ge 1} \times \bbZ\bigr) \sqcup \bigl(\{0\} \times \bbZ_{\ge1}\bigr)
$$
of $\bbZ^2$ for the grading of $\U{\Esm}$.

\begin{dfn}\label{dfn:g=1:BS-symbols1}
\begin{enumerate}
\item
For $i \in \bbZ$ set 
$$
 c_i := (q_1^{i/2} - q_1^{-i/2})(q_2^{i/2} - q_2^{-i/2}) \dfrac{[i]_{\sqrt{\bbL}}}{i}.
$$ 

\item
Recalling $t_{\nu,d}$ in Definition \ref{dfn:g=1:UX},
set 
$t_{(r,d)} := t_{d/r,\gcd(r,d)}$ 
for $(r,d) \in (\bbZ^2)^+$.

\item
For $\bx = (r,d) \in (\bbZ^2)^+$, set 
\begin{align*}
 \gcd(\bx) := 
 \begin{cases} 
  \gcd(r,d) & \text{if $r \neq 0$} \\
  d         & \text{if $r = 0$}
 \end{cases}.
\end{align*}

\item
For $\bx = (r,d) \in (\bbZ^2)^+$, set 
\begin{align*}
 \mu(\bx) := 
  \begin{cases} 
   d/r &\text{if $r\neq 0$} \\ 
   \infty & \text{if $r = 0$}  
  \end{cases}.
\end{align*}

\item
For $\bx,\by \in (\bbZ^2)^+$, set 
$\ep(\bx,\by) := \sgn(\det(\bx,\by)) \in \{\pm 1\}$.
\end{enumerate}
\end{dfn}

\begin{rmk}
$\gcd(\bx)$ was denoted by $\deg(\bx)$ in \cite{BS:2012}.
\end{rmk}

Obviously $\U{\Esm}$ is generated by the elements 
$\{t_{(r,d)} \mid (r,d) \in (\bbZ^2)^+\}$.

\begin{fct}[{\cite{BS:2012}}]
\label{fct:Esm:upper}
By the assignment $t_{r,d} \mapsto T_{(r,d)}$ for $(r,d) \in (\bbZ^2)^+$, 
the composition subalgebra $\U{\Esm}$ is isomorphic to 
the associative algebra generated by the elements
$$
 \{ T_{r,d} \mid (r,d) \in (\bbZ^2)^+ \} 
$$
subject to the following relations.
\begin{enumerate}
\item 
If $\bx,\by \in (\bbZ^2)^+$ satisfy $\mu(\bx) = \mu(\by)$,
then $[T_{\bx}, T_{\by}]=0$,

\item
If $\bx,\by \in (\bbZ^2)^+$ satisfy $\gcd(\bx)=1$ and 
if there is no interior lattice point in the triangle 
with vertices $\mathbf{0}$, $\bx$ and $\bx+\by$ in $\bbZ^2$,
then
$$
 [t_{\by},t_{\bx}] = \ep_{\bx,\by} c_{\gcd(\by)} 
 \dfrac{\theta_{\bx+\by}}{\bbL^{1/2}-\bbL^{-1/2}},
$$
where $\theta_{\bx}$'s are defined by the generating series
$$
 \sum_{i \ge 1} {\theta_{i \bx_0}} z^i
 = \exp\Bigl((\bbL^{1/2}-\bbL^{-1/2})\sum_{j \ge 1}t_{j \bx_0} z^j \Bigr)
$$ 
for $\bx_0 \in (\bbZ^2)^+$ with $\gcd(\bx_0) =1$.
\end{enumerate}
\end{fct}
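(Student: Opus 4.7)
The plan is to verify both families of relations hold in $\U{\Esm}$ — making the assignment $T_\bx \mapsto t_\bx$ a well-defined surjective algebra homomorphism — and then to upgrade surjectivity to bijectivity by matching PBW-style bases. Surjectivity is built into the very definition of the $t_\bx$ as generators of $\U{\Esm}$. Relation (1) is essentially free: if $\mu(\bx) = \mu(\by) = \nu$ then $t_\bx, t_\by \in \U{\Esm}_\nu$, which by the corollary preceding the statement is the polynomial algebra $\bbK[t_{\nu,1}, t_{\nu,2}, \ldots]$. Its commutativity is transported from $\Hall{\Esm}_\infty = \Hall{\Esm}_{\tor}$ through Atiyah's equivalence (Fact \ref{fct:Atiyah}) and the induced bialgebra isomorphism $\phi_{\infty,\nu}$, so $[t_\bx, t_\by]=0$ automatically.

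Relation (2) is the substantive part, and I would attack it by exploiting the $\SL(2,\bbZ)$-symmetry of the situation. Autoequivalences of $\DCoh(\Esm)$ (Fourier--Mukai transforms combined with line-bundle twists) act on $\Num(\Esm) \simeq \bbZ^2$ through $\SL(2,\bbZ)$ and on slopes $\bbQ \cup \{\infty\}$ by the corresponding fractional linear action; both the convexity condition on the triangle $(\mathbf{0}, \bx, \bx+\by)$ and the target identity are invariant under this action, the sign $\ep_{\bx,\by}$ precisely tracking the determinant. Choosing a matrix that carries $\bx$ to $(1,0)$, I would reduce to computing $[t_\by, 1^{\ses}_{(1,0)}]$ directly from the coproduct and Hopf-pairing formulas (Lemmas \ref{lem:Carb:copro} and \ref{lem:Carb:Hpairing}), combined with the rational expression
$$
 \zeta_{\mot}(\Esm; z) = \dfrac{(1 - q_1 z)(1 - q_2 z)}{(1 - z)(1 - \bbL z)}
$$
whose numerator supplies the factors $q_1, q_2$ appearing in $c_{\gcd(\by)}$. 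This parallels but genuinely generalizes the $\bbP^1$-case computation in Fact \ref{fct:g=0:rel}(5), where the numerator of $\zeta_{\mot}(\bbP^1; z)$ is trivial.

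The hardest step will be controlling the convexity condition itself. In general a Hall-algebra bracket $[t_\bx, t_\by]$ picks up contributions from every lattice point of the triangle $(\mathbf{0}, \bx, \bx + \by)$, indexed by the possible intermediate Harder--Narasimhan factors with slopes strictly between $\mu(\bx)$ and $\mu(\by)$ (Corollary \ref{cor:HN}). The assumption that the triangle contains no interior lattice points, together with primitivity of $\bx$, is exactly what forces only the vertices $\mathbf{0}$ and $\bx+\by$ to contribute, leaving the clean identity. Verifying that no ghost contribution hides inside the decomposition, and that the surviving contribution carries the precise structure constant $\ep_{\bx,\by} c_{\gcd(\by)} \theta_{\bx+\by}/(\bbL^{1/2} - \bbL^{-1/2})$, is the delicate point.

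For injectivity I would compare PBW-style bases on both sides. Using (1) and (2), every word in the $T_\bx$'s can be rewritten in the normal form $T_{\bx_1}^{k_1} T_{\bx_2}^{k_2} \cdots T_{\bx_n}^{k_n}$ with strictly decreasing slopes $\mu(\bx_1) > \mu(\bx_2) > \cdots > \mu(\bx_n)$, giving an upper bound on the presented algebra. On the Hall-algebra side, Corollary \ref{cor:HN} combined with the polynomiality $\U{\Esm}_\nu \simeq \bbK[t_{\nu,d}]_{d \ge 1}$ yields a $\bbK$-basis of $\U{\Esm}$ of identical shape, so the two sides match as $\bbK$-modules and the presentation map must be an isomorphism.
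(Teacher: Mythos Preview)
Your outline is aligned with what the paper does: the paper gives no proof of this Fact at all, only the one-line remark that ``the proof in \cite{BS:2012} utilizes the $\SL(2,\bbZ)$-action on the reduced Drinfeld double of $\U{\Esm}$.'' Your plan --- relation~(1) from commutativity of $\U{\Esm}_\nu$ via Atiyah's equivalence, relation~(2) by $\SL(2,\bbZ)$-reduction to a base case governed by $\zeta_{\mot}$, and injectivity by matching Harder--Narasimhan PBW bases against the normal-form basis of the presented algebra --- is exactly the architecture of the Burban--Schiffmann argument and is more detailed than anything the present paper supplies.

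One point deserves tightening. You write ``choosing a matrix that carries $\bx$ to $(1,0)$, I would reduce to computing $[t_\by, 1^{\ses}_{(1,0)}]$'' inside $\U{\Esm}$. A generic element of $\SL(2,\bbZ)$ does not preserve the cone $(\bbZ^2)^+$, so the corresponding derived autoequivalence need not send sheaves to sheaves; it may produce genuine complexes, and then $t_{\gamma(\by)}$ is no longer an element of $\U{\Esm}$. This is exactly why the paper's remark (and \cite{BS:2012}) situates the $\SL(2,\bbZ)$-action on the \emph{Drinfeld double} $\Dr{\U{\Esm}}$, which is $(\bbZ^2)^*$-graded and carries the full action cleanly. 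To make your reduction rigorous you must either pass to the double first and then restrict, or restrict attention to those autoequivalences that genuinely preserve $\Coh(\Esm)$ as an abelian category and argue that these already suffice to reach every base case. Either route works, but the step cannot be skipped.
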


We will not give the detail here.
The proof in \cite{BS:2012} utilizes $\SL(2,\bbZ)$-action 
on the reduced Drinfeld double of $\U{\Esm}$,
which is explained in the next subsection.

\subsection{The Drinfeld double}

The reduced Drinfeld double $\Dr{\U{\Esm}}$ 
of the $\U{\Esm}$ for a smooth elliptic curve $\Esm$ 
is determined by \cite{BS:2012}.
It has the grading with respect to the subset 
$$
 (\bbZ^2)^* := \bbZ^2 \setminus \{(0,0)\} = (\bbZ^2)^+ \sqcup (\bbZ^2)^-,
$$
of $\bbZ^2$, where 
$(\bbZ^2)^- := -(\bbZ^2)^+ = 
 \{(r,d) \in \bbZ^2 \mid r < 0 \text{ or } (r = 0 \text{ and } d < 0) \}$.
We will use the notation $\bx=(r,d)$ for an element of $(\bbZ^2)^*$,
and define $\deg(\bx) \in \bbZ$ 
and $\mu(\bx) \in \bbQ \sqcup \{\pm\infty\}$ as in Definition \ref{dfn:g=1:BS-symbols1}.
Precisely speaking, we have 

\begin{dfn}
\begin{enumerate}
\item 
For $\bx = (r,d) \in (\bbZ^2)^*$, set 
\begin{align*}
 \gcd(\bx) := 
 \begin{cases} 
  \gcd(r,d) & \text{if $r \neq 0$} \\
  d         & \text{if $r = 0$}
 \end{cases}.
\end{align*}
Thus we have $\deg(\bx) \in \bbZ_{> 0}$ for $\bx \in (\bbZ^2)^+$ 
and $\deg(\bx) \in \bbZ_{< 0}$ for $\bx \in (\bbZ^2)^-$.

\item
For $\bx = (r,d) \in (\bbZ^2)^*$, set 
\begin{align*}
 \mu(\bx) := 
 \begin{cases}
   d/r  &\text{if $r\neq 0$} \\ 
   \infty  & \text{if $r = 0$ and $d > 0$}  \\
   -\infty & \text{if $r = 0$ and $d < 0$}  
 \end{cases}.
\end{align*}

\item
For $\bx,\by \in (\bbZ^2)^*$, set 
$\ep(\bx,\by) := \sgn(\det(\bx,\by)) \in \{\pm 1\}$.

\item
\begin{align*}
 \ep(\bx) := 
 \begin{cases}
    1  & \text{if $x \in (\bbZ^2)^+$} \\ 
   -1  & \text{if $x \in (\bbZ^2)^-$}    
 \end{cases}.
\end{align*}
\end{enumerate}
\end{dfn}

\begin{fct}[{\cite{BS:2012}}]
\label{fct:Esm:double}
$\Dr{\U{\Esm}}$ is isomorphic to the associative algebra  
generated by the elements $\{t_{(r,d)} \mid (r,d) \in (\bbZ^2)^*\}$
and $\{k_{r,d} \mid r,d \in \bbZ\}$ 
modulo the following relations.
\begin{enumerate}
\item 
$k_{r,d} k_{r',d} = k_{r+r',d+d'}$,

\item
If $\bx,\by \in (\bbZ^2)^*$ satisfy $\mu(\bx)=\mu(\by)$, then
$$
 [t_{\bx}, t_{\by}] = \delta_{\bx,-\by} c_{\gcd(\bx)} 
  \dfrac{k_{\bx} - k_{\bx}^{-1}}{\bbL^{1/2}-\bbL^{-1/2}}.
$$

\item
If $\bx,\by \in (\bbZ^2)^*$ satisfy $\gcd(\bx)=1$ and 
if there is no interior lattice point in the triangle 
with vertices $\mathbf{0}$, $\bx$ and $\bx+\by$ in $\bbZ^2$,
then
$$
 [t_{\by}, t_{\bx}] = \ep_{\bx,\by} c_{\gcd(\by)} 
 k_{\alpha(\bx,\by)} \dfrac{\theta_{\bx+\by}}{\bbL^{1/2}-\bbL^{-1/2}},
$$
where $\alpha(\bx,\by)$ is given by
\begin{align*}
 \alpha(x,y) := 
 \begin{cases}
  \dfrac{1}{2}\ep_{\bx}
  \bigl(\ep_{\bx} \bx + \ep_{\by} \by - \ep_{\bx+\by} (\bx+\by)\bigr)
  & \text{if $\ep_{\bx,\by}=1$} \\
  \dfrac{1}{2}\ep_{\by}
  \bigl(\ep_{\bx} \bx + \ep_{\by} \by - \ep_{\bx+\by} (\bx+\by)\bigr)
  & \text{if $\ep_{\bx,\by}=-1$} 
 \end{cases}
\end{align*}
and $\theta_{\bx}$ is determined by 
$$
 \sum_{i \ge 1} {\theta_{i \bx_0}} z^i
 = \exp\Bigl((\bbL^{1/2}-\bbL^{-1/2})\sum_{j \ge 1}t_{j \bx_0} z^j \Bigr)
$$ 
for $\bx_0 \in (\bbZ^2)^+$ with $\gcd(\bx_0) =1$.
\end{enumerate}
\end{fct}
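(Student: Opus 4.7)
The plan is to mirror the strategy of the $g=0$ case (see Proposition \ref{prop:g=0:DU-rel}), but where in the $\bbP^1$ case one exploits the structure of $\Coh(\bbP^1)$ as a hereditary category with two ``orthogonal'' classes (line bundles and torsion sheaves), here one exploits Atiyah's classification (Fact \ref{fct:Atiyah}) together with the $\SL(2,\bbZ)$-action on $\DCoh(\Esm)$ coming from autoequivalences (Fourier--Mukai transforms). Since the statement is really a recollection of the Burban--Schiffmann theorem, the goal is to translate their computation into the motivic setting.

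First, I would establish the positive-part presentation, Fact \ref{fct:Esm:upper}, as input. By Atiyah's equivalence $\Phi_{\infty,\mu(\bx_0)}$ applied to $\Hall{\Esm}_\infty$, the subalgebra $\U{\Esm}_{\mu(\bx_0)}$ is isomorphic to $\U{\Esm}_\infty$, hence to a polynomial algebra in the $\{t_{d\bx_0}\}_{d\ge 1}$, which gives relation (1) for parallel slopes in the positive sector. The mixed-slope relations follow by analysing extensions between semistable bundles of coprime slope; for a minimal configuration (no interior lattice points in the triangle) the extension stack is controlled by a single Hom and Ext, and its class in $K(\Var{\fk})$ is governed by $\zeta_{\mot}(\Esm;z)$, giving the constants $c_{\gcd(\bx)}$.

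Next I would compute the Hopf pairing on these generators, extending Fact \ref{fct:g=0:HopfPairing} and Lemma \ref{lem:Carb:Hpairing} to every slope via $\phi_{\infty,\nu}$, which is an isometry by Atiyah's equivalence. The coproducts of $t_\bx$ and $\theta_\bx$ are computed as in Lemma \ref{lem:Carb:copro}: transport of structure from the torsion case gives
\begin{align*}
\Delta(t_{d\bx_0}) = t_{d\bx_0} \otimes 1 + k_{d\bx_0} \otimes t_{d\bx_0}
\end{align*}
for primitive $\bx_0$, and analogous formulas for $\theta$. Then I assemble the reduced Drinfeld double $\Dr{\U{\Esm}}$ by Definition \ref{dfn:d_red-U}; relations (1) and (2) of Fact \ref{fct:Esm:double} come directly from: $k$-relations by definition of the extension, commutator-to-$k$ identities for $-\by = \bx$ by the standard Drinfeld-double computation using the Hopf pairing value on $(t_\bx, t_{-\bx})$, and vanishing for non-opposite parallel slopes from the fact that the pairing between $\U{\Esm}_\nu^+$ and $\U{\Esm}_\nu^-$ (after sign change) is diagonal.

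The main obstacle is relation (3): the ``triangle condition'' mixed relation, including the precise form of the twist $k_{\alpha(\bx,\by)}$. The most efficient way I see is to exploit the $\SL(2,\bbZ)$-symmetry of $\Dr{\U{\Esm}}$: Fourier--Mukai autoequivalences of $\DCoh(\Esm)$ (tensoring by a degree-one line bundle, and the spherical-twist type Fourier--Mukai transform with respect to the Poincar\'e sheaf) induce algebra automorphisms of $\Dr{\U{\Esm}}$ that act on the grading lattice $\bbZ^2$ via generators of $\SL(2,\bbZ)$. Using these, any minimal pair $(\bx,\by)$ (no interior lattice points in the triangle $\mathbf{0},\bx,\bx+\by$) can be transported to a pair with $\bx = (1,0)$ and $\by = (0,n)$ or $\by = (-1,n)$, where the relation reduces to an explicit extension calculation already done in Fact \ref{fct:g=0:rel}(4) or its analogue. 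One then verifies that $\alpha(\bx,\by)$ is exactly the grading class produced by the Drinfeld double relation in this base case, and that transport under $\SL(2,\bbZ)$ yields the stated formula in general. The subtle bookkeeping is the interplay of the sign $\ep(\bx,\by)$ with the convention distinguishing $H^+$ from $H^-$, which is why $\alpha$ has two cases; this must be checked separately for $\ep(\bx,\by) = \pm 1$, and is the primary technical difficulty.
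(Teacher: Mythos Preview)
Your proposal is correct and follows essentially the same approach that the paper indicates: the paper does not give its own proof of this Fact but defers to \cite{BS:2012}, noting only that the argument ``utilizes the $\SL(2,\bbZ)$-action'' coming from autoequivalences of $\DCoh(\Esm)$, which is precisely the reduction-to-base-case strategy you outline. Your sketch (Atiyah's equivalence for the slope subalgebras, transport of the coproduct and Hopf pairing via $\phi_{\infty,\nu}$, then $\SL(2,\bbZ)$-transport of minimal triangles to a standard configuration) is an accurate summary of the Burban--Schiffmann argument translated to the motivic setting.
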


As mentioned in the previous subsection.
the proof in \cite{BS:2012} 
utilizes the $\SL(2,\bbZ)$-action.
Notice that the relations of the algebra in the above Fact 
are obviously $\SL(2,\bbZ)$-(quasi)invariant.
The $\SL(2,\bbZ)$-action on $\Dr{\U{\Esm}}$ 
comes from the autoequivalences of $\DCoh(\Esm)$,
which will be explained in the next subsection.

Finally we relate $\Dr{\U{\Esm}}$ to the quantum toroidal algebra for $\fgl_1$.

\begin{fct}[{\cite[Theorem 5.4]{BS:2012}}]\label{fct:BS:E-U}
The algebra given in Fact \ref{fct:Esm:double} 
is isomorphic to the $\fgl_1$-quantum toroidal algebra $\algU$ 
after tensoring coefficient field.
Thus we have 
$$
 \Dr{\U{\Esm}} \longsimto \algU \otimes_{\bbQ(q_1,q_2)}\bbK.
$$
\end{fct}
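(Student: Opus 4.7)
The plan is to construct an explicit isomorphism of $\bbZ^2$-graded algebras
$\Phi: \algU \otimes_{\bbQ(q_1, q_2)} \bbK \longsimto \Dr{\U{\Esm}}$
by matching the generating currents of $\algU$ with natural series in the composition subalgebra, then verifying the defining relations family by family. First fix the parameters: the inverses of the two zeros of $\zeta_{\mot}(\Esm; z)$ are $(q_1, q_2)$, and from the denominator $(1-z)(1-\bbL z)$ one reads off $q_1 q_2 = \bbL$, so $q_3 = q_1^{-1} q_2^{-1} = \bbL^{-1}$. On generators, set $E_k \mapsto t_{(1,k)}$, $F_k \mapsto -t_{(-1,-k)}$ up to a central rescaling, $K \mapsto k_{(1,0)}$, $q^{\pm c} \mapsto k_{(0, \pm 1)}$, and identify $H_{\pm r}$ with a scalar multiple of $t_{(0, \pm r)}$ through the exponential defining $K^{\pm}(z)$.

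The Heisenberg relations for the $H_r$ and the commutation $K^{\pm}(z) K^{\pm}(w) = K^{\pm}(w) K^{\pm}(z)$ then follow directly from relation (2) of Fact \ref{fct:Esm:double} applied along the vertical axis $\bx = (0, r), \by = (0, s)$. The quadratic relation $g(z,w) E(z) E(w) + g(w,z) E(w) E(z) = 0$ is matched against the current presentation of Proposition \ref{prop:Ca:current}: a direct computation using $q_1 q_2 = \bbL$ gives the identity
\[
 \frac{\zeta_{\mot}(\Esm; z/w)}{\zeta_{\mot}(\Esm; w/z)} = -\frac{g(w,z)}{g(z,w)}
\]
of rational functions, with the sign absorbed into the twist $\bbL^{\chi/2}$ of the $*$-product. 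The relation for $F(z), F(w)$ follows by the $\SL(2, \bbZ)$-symmetry of Fact \ref{fct:Esm:double}. For the mixed relation $[E(z), F(w)]$, apply relation (2) on the diagonal $\by = -\bx$ to obtain a term proportional to $(k_{\bx} - k_{-\bx})/(\bbL^{1/2} - \bbL^{-1/2})$ and relation (3) off the diagonal to obtain the $\theta$-series; reassembling into currents produces precisely the $\delta$-function right-hand side.

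The subtlest step is the cubic Serre relation $\Sym_{z_1, z_2} [X(z_1), [X(z_2), X(w)]_q]_{q^{-1}} = 0$. Since relation (3) of Fact \ref{fct:Esm:double} applies only when the triangle with vertices $\mathbf{0}, \bx, \bx+\by$ contains no interior lattice point, it does not directly yield the Serre identity on three rank-one currents. Instead I would combine relations (2) and (3) iteratively and use the current form of the quadratic relation to rewrite the nested brackets; the vanishing after symmetrization then reduces to a polynomial identity in $q_1, q_2, q_3$ governed by $q_1 q_2 q_3 = 1$. This is where the main technical obstacle lies: matching the combinatorial structure constants $c_{\gcd(\by)} \ep(\bx, \by)$ against the explicit symmetrizer on the toroidal side requires careful bookkeeping of which triangles are admissible.

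Finally, surjectivity of $\Phi$ is immediate since the image contains $\{t_{(\pm 1, k)}\}$ and $\{t_{(0, \pm r)}\}$, which together with the $\SL(2, \bbZ)$-action generate all $\{t_{\bx}\}$ by Fact \ref{fct:Esm:double}. Injectivity follows by comparing triangular decompositions: on the Hall side one has
$\Dr{\U{\Esm}} \cong \U{\Esm}^{+} \otimes_{\bbK} \bbK[\Num(\Esm)] \otimes_{\bbK} \U{\Esm}^{-}$
with each $\U{\Esm}^{\pm}$ a polynomial algebra indexed by $(\bbZ^2)^{\pm}$, while on the toroidal side the analogous PBW-type decomposition follows from the current presentation. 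A weight-by-weight count against the $\bbZ^2$-grading then upgrades surjectivity to bijectivity.
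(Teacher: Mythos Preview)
Your overall strategy---define a map on generators, verify relations, then prove bijectivity by a size count---is the same shape as the paper's, but the injectivity step contains a genuine circularity. You write that ``on the toroidal side the analogous PBW-type decomposition follows from the current presentation''; this is precisely what is \emph{not} known in advance. A PBW basis for $\algU$ (equivalently, a spanning set indexed by convex paths in $\bbZ^2$ of the right size) is one of the main outputs of \cite{BS:2012} and \cite{S:2012}, and it is proved there \emph{by} using the isomorphism with the Hall algebra, where the Harder-Narasimhan filtration supplies the basis for free. So you cannot invoke it to establish the isomorphism; the logic runs the other way.

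The paper avoids this by working with the intermediate algebra $\algU'$ obtained by dropping the Serre relation. One checks (via Proposition~\ref{prop:Ca:current}) that all the non-Serre relations hold in $\Dr{\U{\Esm}}$, giving a surjection $\algU' \twoheadrightarrow \Dr{\U{\Esm}}$. The substantial step is then the convex-path straightening of \cite{S:2012}: it shows that in $\algU'/(\text{Serre}) = \algU$ every element can be written in the ordered form dictated by HN filtrations, so the graded pieces of $\algU$ are no larger than those of $\Dr{\U{\Esm}}$. Combined with surjectivity this forces the kernel to be exactly the Serre ideal. Your sketch of the Serre relation (``combine (2) and (3) iteratively \ldots\ reduces to a polynomial identity'') is also where this straightening really lives; the no-interior-lattice-point hypothesis in relation (3) means you cannot apply it freely, and organizing the induction is the content of \cite{S:2012}, not a routine bookkeeping exercise.
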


Here recall that $\bbK = K(\St{\fk})[\bbL^{\pm1/2}]$,
and $\algU$ is considered as an algebra defined over 
$\bbQ(q_1,q_2)$ with $q_3 = q_1^{-1}q_2^{-1}$.
The ring homomorphism $\bbQ(q_1,q_2) \to \bbK$ 
is given by mapping $q_1$ and $q_2$ to the inverse of zeros of 
the motivic zeta function
$$
 \zeta_\mot(\Esm;z) 
 = \dfrac{1 + \bigl(\bbL+1-[\Pic^0(\Esm)]\bigr) z + \bbL z^2}{(1-z)(1-\bbL z)}
 = \dfrac{(1-q_1 z)(1-q_2 z)}{(1-z)(1-\bbL z)}.
$$

\begin{proof}
Let us explain the outline of the proof explained in \cite{BS:2012}. 
Notice that the relations in Proposition \ref{prop:Ca:current} 
coincides with those of $\algU$ except the Serre-like relation.
Let us denote by $\algU'$ the algebra generated by the same generators 
in $\algU$ satisfying the relations except the Serre-like relation.
Denote by $\wt{\mathrm{U}}$ the algebra given in Fact \ref{fct:Esm:double}.
Then one can find that there is a surjection 
$\algU' \twoheadrightarrow \wt{\mathrm{U}}$.
By the analysis of the convex paths in $\bbZ^2$ 
(corresponding to Harder-Narasimhan filtrations of coherent sheaves on $\Esm$),
one can show that the Serre-like relation coincides with the kernel of 
the above surjection.
See \cite{S:2012} for the precise account for this analysis.
\end{proof}

\subsection{The automorphism}
\label{subsec:auto-FM}

The Drinfeld double $\Dr{\U{\Esm}}$,
or more generally $\Dr{\U{\Csm}}$ for a smooth curve $\Csm$, 
may be considered as the Hall algebra of the root category 
$\catR(\Esm) := \DCoh(\Esm)/[1]^2$,
where $[1]$ denotes the shift of complexes,
and the quotient symbol means taking the orbit category.
This idea is realized in the recent work of Bridgeland \cite{B:2013}.
In this paper we will not treat this approach
and use the definition of Drinfeld double itself.

Let us recall that the description of 
the group $\Aut(\DCoh(\Esm))$ of auto-equivalences of $\DCoh(\Esm)$
for a smooth elliptic curve $\Esm$ defined over a field $\fk$.
We will also mention some generalities on Fourier-Mukai transforms.
The basic references are \cite{BBH:book} and \cite{H:book}.

By the fundamental result of Orlov \cite{O:1997},
For any smooth projective variety $X$ 
defined over a algebraically closed field $\fk$,
every auto-equivalence of $\DCoh(X)$  
can be realized as a \emph{Fourier-Mukai transform} \cite{Mukai:1981}.
Let $Y$ be another smooth projective variety and 
$\shF$ be an object of $\DCoh(X \times Y)$,
namely a bounded complex of coherent sheaves on the product variety $X \times Y$.
Then the functor $\Phi_{\shF}: \DCoh(X) \to \DCoh(Y)$ is defined to be 
$$
 \Phi_{\shF}(-) := 
  \mathop{\mathbf{R} p_{Y *}}
   \bigl(\shF \stackrel{\mathbf{L}}{\otimes} p_X^* (-)\bigr),
$$
where $p_X: X \times Y \to X$ and $p_Y: X \times Y \to Y$ 
are natural projections 
and the symbols $\mathop{\mathbf{R} p_{Y *}}$ 
and $\stackrel{\mathbf{L}}{\otimes}$ denote the derived functors.
We call this functor a Fourier-Mukai transform if 
it gives an equivalence of categories.

It is also known by \cite{O:2002} that for a (smooth) abelian variety $A$, 
any smooth projective variety $X$ with equivalent derived category
$\DCoh(X) \simeq \DCoh(A)$ 
is also a smooth abelian variety of the same dimension, 
and moreover if $\Esm$ is a smooth elliptic curve,
then $X$ is isomorphic to $\Esm$.
Thus $\Aut\bigl(\DCoh(\Esm)\bigr)$ is generated by Fourier-Mukai transforms 
$\Phi_{\shF}$ with $\shF \in \DCoh(\Esm \times \Esm)$.

For any smooth projective variety $Y$,
we have natural equivalences of $\DCoh(Y)$ given by 
\begin{itemize}
\item
the shift $[1]$ of complexes,
\item
automorphism of $Y$,
\item
tensoring with a line bundle of degree $0$.
\end{itemize}
These can be also realized by Fourier-Mukai transforms,
and consist the subgroup $\bbZ \oplus \Aut(Y) \ltimes \Pic^0(Y)$ 
of $\Aut(\DCoh(Y))$.
Let us define the group $\FM(Y)$ by the short exact sequence 
\begin{align}\label{eq:g=1:FM}
\xymatrix{
  1 \ar[r] & \bbZ \oplus \Aut(Y) \ltimes \Pic^0(Y) \ar[r] 
  & \Aut\bigl(\DCoh(Y)\bigr) \ar[r] & \FM(Y) \ar[r] & 1}.
\end{align}

If $A$ is an abelian variety,
then a set of generators of $\FM(A)$ is given by $\{\Phi_{\shF}\}$,
where $\shF$'s are the universal family of semi-homogeneous sheaves.

In the case of a smooth elliptic curve $\Esm$,
the description of $\FM(\Esm)$ is extremely simple,
and the result is 
\begin{align}\label{eq:g=1:FM-SL}
  \FM(\Esm) = \langle T_{\shO}, \Phi_{\shP} \rangle \simto \SL(2,\bbZ).
\end{align}
Here we denoted by $T_{\shO}$ 
the \emph{spherical twist} by the structure sheaf $\shO=\shO_{\Esm}$,
which is defined by
$$
 T_{\shO}(-) := \Cone\bigl(\RHom(\shO,-) \otimes \shO \longto{\text{ev}} - \bigr).
$$
We also denoted by $\Phi_{\shP_{\Esm}}$ the Fourier-Mukai transforms 
defined by the Poincare bundle $\shP_{\Esm}$ on $\Esm \times \Pic^0(\Esm)$.
($\shP_X$ is the universal family of the Jacobian variety $\Pic^0(X)$).

One can understand the isomorphism \eqref{eq:g=1:FM-SL} 
in terms of K-theoretic Fourier-Mukai transforms $\Phi^K_{\shF}$,
which is the automorphism on the Grothendieck group 
$$
 K(\DCoh(\Esm)) = K(\Coh(\Esm))
$$ 
induced by $\Phi_{\shF}$.
We can further consider the induced automorphisms 
on the numerical Grothendieck group $\Num(\Esm)$.

Denote it by $t_{\shO}$ and $\phi_{\shP}$ the automorphisms on $\Num(\Esm)$ 
induced by $T_{\shO}$ and $\Phi_{\shP}$.
Since $\Num(\Esm) \simeq \bbZ^2$,
they can be realized as matrices. 
The result is 
$$
 t_{\shO}    = \begin{pmatrix} 1 & -1 \\ 0 & 1 \end{pmatrix},\quad
 \phi_{\shP} = \begin{pmatrix} 0 & -1 \\ 1 & 0 \end{pmatrix}.
$$
In fact. this argument gives us a homomorphism $\FM(\Esm) \to \SL(2,\bbZ)$ 
of groups, 
and one can show that it is indeed an isomorphism.

Now note that the part $\Aut(\Esm) \ltimes \Pic^0(\Esm)$ 
acts trivially on $\Dr{\U{\Esm}}$.
Thus it is natural to consider the action of 
$\bbZ$ (generated by shifts of complexes) 
and $\FM(\Esm)$ on on $\Dr{\U{\Esm}}$.
By the description \eqref{eq:g=1:FM} and \eqref{eq:g=1:FM-SL},
these parts coincide with 
the universal over $\wh{\SL}(2,\bbZ)$ of $\SL(2,\bbZ)$, 
which sits in the short exact sequence 
\begin{align*}
\xymatrix{
  1 \ar[r] & \bbZ \ar[r] & \wh{\SL}(2,\bbZ) \ar[r] 
& \SL(2,\bbZ) \ar[r] & 1},   
\end{align*}
acts on the algebra $\Dr{\U{\Esm}}$.

To write down the $\wh{\SL}(2,\bbZ)$-action, 
we need to introduce the \emph{winding number}.
Note that we can lift the natural $\SL(2,\bbZ)$-action 
on the circle $S^1 = \bbP \bbR^1 = (\bbR^2 \setminus \{0\}) / \bbR_{>0}$ 
to an $\wh{\SL}(2,\bbZ)$-action on $\bbR$ 
by the identification $S^1 = \bbR / 2\bbZ$.

\begin{dfn}
\begin{enumerate}
\item
For a $(r,d) \in \bbZ^*$, denote by $(r:d)$ the corresponding element 
of $S^1 =\bbP \bbR^1$,
and by $\ol{(r:d)} \in \bbR$ any lift of $(r:d)$.

\item
For a slope $d/r \in \bbQ \sqcup \{\pm \infty\}$ 
and an element $\gamma \in \wh{\SL}(2,\bbZ)$,
define the winding number $w(\gamma,d/r) \in \bbZ$ by
\begin{align*}
w(\gamma,d/r):=
 \begin{cases}
 \#\Bigl( \bbZ \cap \bigl[\ol{(r:d)},\gamma(\ol{(r:d)})\bigr] \Bigr)
 & \text{if $\ol{(r:d)} \le \gamma(\ol{(r:d)})$} \\
 - \#\Bigl( \bbZ \cap \bigl[\ol{(r:d)},\gamma(\ol{(r:d)})\bigr] \Bigr)
 & \text{otherwise}
 \end{cases}.
\end{align*}
Here $\bigl[\ol{(r:d)},\gamma(\ol{(r:d)})\bigr]$ denotes the interval 
between $\ol{(r:d)}$ and $\gamma(\ol{(r:d)})$ in $\bbR$.
\end{enumerate}
\end{dfn}

\begin{fct}[{\cite[(6.16)]{BS:2012}}]
The group $\wh{\SL}(2,\bbZ)$ acts on $\Dr{\U{\Esm}}$ by 
$$
 \gamma(k_{\bx}) = k_{\gamma(\bx)},\quad 
 \gamma(t_{\bx}) = t_{\Phi(x)} k_{\gamma(\bx)}^{w(\gamma,\mu(x))}
$$
for $\gamma \in \SL(2,\bbZ)$,
\end{fct}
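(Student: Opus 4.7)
The plan is to lift each auto-equivalence $\gamma$ of $\DCoh(\Esm)$ to an automorphism of $\Dr{\U{\Esm}}$ and then verify that this lift is given by the stated formulas. By Orlov's theorem every element of $\Aut(\DCoh(\Esm))$ is a Fourier-Mukai transform, and the subgroup $\Aut(\Esm) \ltimes \Pic^0(\Esm)$ acts trivially on $\Dr{\U{\Esm}}$ since its members preserve both the sums over isomorphism classes defining $1^{\ses}_{(1,n)}$ and $t_{\bx}$ and the numerical invariants $(\rk,\deg)$. Hence only the central $\bbZ$ generated by $[1]$ together with $\FM(\Esm) \cong \SL(2,\bbZ)$ act nontrivially, and these assemble into $\wh{\SL}(2,\bbZ)$. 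The idea is that the action factors through the Hall algebra of the root category $\DCoh(\Esm)/[2]$, inside which $\Dr{\U{\Esm}}$ sits naturally.

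First I would pin down the action on the extended part. Because $\bbK[\Num(\Esm)]$ is the group algebra of $\Num(\Esm) \cong \bbZ^2$ and the K-theoretic FM transform induced by $\gamma$ is exactly the matrix $\gamma$ acting on $\bbZ^2$, the formula $\gamma(k_{\bx}) = k_{\gamma(\bx)}$ is forced. Next, for $\bx \in (\bbZ^2)^+$ of slope $\mu = \mu(\bx)$, Atiyah's Fact \ref{fct:Atiyah} identifies $t_{\bx}$ with the transport of $t_{\gcd(\bx)} \in \Hall{\Esm}_\infty$ under $\phi_{\infty,\mu}$. The FM kernel $\gamma$ sends $\catS_\mu$ either into $\catS_{\gamma(\mu)}$ or into $\catS_{\gamma(\mu)}[1]$, depending on how many times the slope ray crosses the discontinuity in $\bbP^1\bbR$ when lifted to $\bbR$; each such crossing amounts to an extra application of $[1]$, which in the root category (hence in $\Dr{\U{\Esm}}$) is identified with multiplication by the central element $k_{\bx}$. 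Accumulating these crossings yields precisely $w(\gamma,\mu(\bx))$, forcing $\gamma(t_{\bx}) = t_{\gamma(\bx)} k_{\gamma(\bx)}^{w(\gamma,\mu(\bx))}$.

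The main obstacle will be to check that this prescription respects the cross-slope commutation relation (3) of Fact \ref{fct:Esm:double}, which carries both the central twist $k_{\alpha(\bx,\by)}$ and the sign $\ep_{\bx,\by}$. After applying $\gamma$ to both sides, the correction factors $k_{\gamma(\cdot)}^{w(\gamma,\mu(\cdot))}$ accumulate on each of the three terms $t_{\bx}$, $t_{\by}$, $\theta_{\bx+\by}$, and the resulting identity reduces to a cocycle-type equation matching $\alpha(\gamma(\bx),\gamma(\by))$ against $\gamma(\alpha(\bx,\by))$ up to winding-number corrections in $\bbZ^2$. It suffices to verify this on the three generators $[1]$, $T_{\shO}$, and $\Phi_{\shP}$ of $\wh{\SL}(2,\bbZ)$: the shift $[1]$ is immediate from the $\pm$-grading of the Drinfeld double; $\Phi_{\shP}$ interchanges the horizontal and vertical axes of the grading, sending the Heisenberg subalgebra indexed by torsion sheaves onto the one indexed by line bundles; and the spherical twist $T_{\shO}$ is a unipotent shear that fixes the horizontal axis, so its winding numbers can be computed directly from the defining triangle of $T_{\shO}$. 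A Steinberg-type presentation of $\wh{\SL}(2,\bbZ)$ then propagates the checks to all of the group.
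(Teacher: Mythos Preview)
The paper does not prove this statement at all: it is recorded as a \emph{Fact} cited from \cite[(6.16)]{BS:2012}, with no accompanying proof. The only gesture toward justification is the Remark immediately following, which points to Cramer's general theorem \cite{C:2010} that $\Aut(\mathsf{D^b}\catA)$ acts on the Drinfeld double of the Hall algebra of $\catA$. So there is nothing in the present paper to compare your proposal against.

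That said, your sketch is a faithful outline of how Burban and Schiffmann actually argue in \cite{BS:2012}: one uses that a Fourier--Mukai transform $\gamma$ sends $\catS_\mu$ to a shift of $\catS_{\gamma(\mu)}$, that the shift $[1]$ acts on the Drinfeld double by the sign-and-$k$ involution exchanging the $\pm$ copies, and that the accumulated shifts along the path from $\mu$ to $\gamma(\mu)$ in $\bbP^1\bbR$ are counted by the winding number. Your plan to verify compatibility with the commutation relation (3) of Fact~\ref{fct:Esm:double} on the generators $[1]$, $T_{\shO}$, $\Phi_{\shP}$ and then propagate via a presentation of $\wh{\SL}(2,\bbZ)$ is exactly the structure of their argument. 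The one place where your sketch is slightly glib is the identification of $[1]$ with ``multiplication by $k_{\bx}$'': more precisely, $[1]$ swaps the two copies $\U{\Esm}^{\pm}$ in the Drinfeld double and the $k$-factor arises from how the Hopf pairing and the reduction $k_\alpha^+ k_{-\alpha}^- = 1$ interact with this swap; this is where Cramer's theorem does the real work.
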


\begin{rmk}
For the ordinary Hall algebra $\cal{H}_{\ext}(\catA)$ 
of a finitary abelian category $\catA$,
the existence of the action of $\Aut(\mathop{\mathsf{D^b}}\catA)$ 
on the Drinfeld double 
of the algebra is shown by Cramer \cite{C:2010}. 
\end{rmk}

Finally we can state the second main result of \cite{BS:2012}.

\begin{fct}[{\cite{BS:2012}}]
The Fourier-Mukai transform 
$\Phi_{\shP}$ with the Poinvare bundle as its kernel
induces the algebra automorphism 
$\Phi^H = \theta$ on $\Dr{\U{\Esm}} = \algU \otimes \bbK$.
\end{fct}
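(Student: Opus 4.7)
The plan is to reduce everything to a finite check on generators by passing through the identification $\Dr{\U{\Esm}} \longsimto \algU \otimes_{\bbQ(q_1,q_2)} \bbK$ of Fact~\ref{fct:BS:E-U}, combined with the explicit $\wh{\SL}(2,\bbZ)$-action on $\Dr{\U{\Esm}}$ recorded above. Both $\Phi_\shP^H$ and Miki's $\theta$ are algebra automorphisms, and $\algU$ is generated over $\bbK$ by the finite list $\{E_0, F_0, H_{\pm 1}, K, q^c\}$ occurring in Miki's formulas, so it suffices to verify that $\Phi_\shP^H$ acts on these generators exactly as $\theta$ prescribes.

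First I would fix an explicit lift $\wh{\phi}_\shP \in \wh{\SL}(2,\bbZ)$ of the matrix $\phi_\shP = \begin{pmatrix} 0 & -1 \\ 1 & 0 \end{pmatrix}$ induced by the Poincar\'e transform on $\Num(\Esm) \simeq \bbZ^2$, normalized so that its covering transformation on $\bbR$ is a quarter rotation. Under the BS isomorphism the generators $E_k$ correspond to the elements $t_{(1,k)}$, the $F_k$ to $t_{(-1,-k)}$, the Heisenberg currents $H_r$ to $t_{(0,r)}$, and the central generators $q^c, K$ to the lattice elements $k_{(0,1)}, k_{(1,0)}$. Since $\phi_\shP$ acts on $\bbZ^2$ by the cyclic permutation $(1,0) \mapsto (0,1) \mapsto (-1,0) \mapsto (0,-1) \mapsto (1,0)$, the action formula
\begin{equation*}
\wh{\phi}_\shP(k_\bx) = k_{\phi_\shP(\bx)}, \qquad
\wh{\phi}_\shP(t_\bx) = t_{\phi_\shP(\bx)} \, k_{\phi_\shP(\bx)}^{w(\wh{\phi}_\shP,\mu(\bx))}
\end{equation*}
immediately exchanges the horizontal and vertical Heisenberg subalgebras, matching the qualitative picture of $\theta$ drawn in the figure. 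Computing the winding numbers $w(\wh{\phi}_\shP, \mu(\bx))$ for the four relevant slopes $0$ and $\pm\infty$ produces values in $\{0, \pm 1\}$, so that each of the images $\wh{\phi}_\shP(t_{(1,0)})$, $\wh{\phi}_\shP(t_{(-1,0)})$, $\wh{\phi}_\shP(t_{(0,\pm 1)})$ is of the form $t_{\phi_\shP(\bx)}$ twisted by at most one factor of $k^{\pm 1}$.

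The last step is to match these twisting factors and signs against Miki's prescriptions $E_0 \mapsto -q^c H_{-1}$, $F_0 \mapsto a q^c H_1$, $H_1 \mapsto E_0$, $H_{-1} \mapsto -a F_0$, $q^c \mapsto K$, $K \mapsto q^{-c}$. The hard part will be precisely this bookkeeping. The lift $\wh{\phi}_\shP$ is only determined up to the central generator of $\bbZ \subset \wh{\SL}(2,\bbZ)$, so the specific lift must be pinned down in order to reproduce the right signs and to obtain $K \mapsto q^{-c}$ rather than $K \mapsto q^c$; likewise, the constant $a$ in Miki's formula depends on the normalization of generators, and comparing the normalization implicit in Definition~\ref{dfn:g=1:BS-symbols1} (involving the factors $c_i$ and $\ep_{\bx,\by}$ of Fact~\ref{fct:Esm:double}) with the one of \cite{Mi:2007} requires a careful but finite computation. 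Once these discrepancies are absorbed into the choice of lift and of identification, the equality $\Phi_\shP^H = \theta$ follows from agreement on the chosen generating set.
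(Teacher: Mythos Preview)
The paper does not supply its own proof of this statement: it is recorded as a \emph{Fact} from \cite{BS:2012} and left uncited beyond that attribution. Your proposal is therefore not competing against an argument in the present paper but against the original Burban--Schiffmann proof, and your outline is essentially the approach taken there: use the explicit $\wh{\SL}(2,\bbZ)$-action formula on $\Dr{\U{\Esm}}$ (the winding-number formula you quote), specialize to the lift of $\phi_\shP = \left(\begin{smallmatrix}0&-1\\1&0\end{smallmatrix}\right)$, and then compare the resulting action on the generators $t_{(1,0)}, t_{(-1,0)}, t_{(0,\pm1)}, k_{(1,0)}, k_{(0,1)}$ with Miki's prescription for $\theta$ under the identification of Fact~\ref{fct:BS:E-U}. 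Your caveats about the choice of lift and the normalization constant $a$ are exactly the points one must handle, and they are indeed finite bookkeeping rather than obstructions.
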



\section{The case for irreducible projective curves of arithmetic genus one}
\label{sect:Ea}

In this section we will give the main result of this paper.
We investigate the motivic-Hall algebras 
for singular irreducible projective curves of arithmetic genus one,
namely, a singular elliptic curve with a node or a cusp.
The final result will be quite similar to the smooth elliptic case.

In this section $\fk$ denotes a fixed  algebraically closed field of characteristic zero.
$C$ denotes an irreducible reduced projective curve over $\fk$,
so that it may be singular.

\subsection{Semistable sheaves on singular elliptic curves and Fourier-Mukai transforms}
\label{subsec:Ea:FM}

We follow \cite{BK:2006} for the description of 
semistable sheaves on $C$.

The numerical Grothendieck group $\Num(C)$ is isomorphic to 
$\bbZ^2$ as modules by the homomorphism
$$
 \Num(C) \longsimto \bbZ^2,\quad
 \overline{\shE} \longmapsto \bigl(\rk(\shE),\deg(\shE)\bigr).
$$
Recalling \S\ref{subsec:stability},
we denote by $\mu(\shE) := \deg(\shE)/\rk(\shE)$ 
the slope of $\shE \in \Coh(C)$,
and consider the slope stability of coherent sheaves. 
Denote by $\catS_\nu$ the full subcategory of $\Coh(C)$ 
consisting of semistable sheaves of slope $\nu$.

Here is the fundamental fact on the semistable sheaves on $C$.

\begin{fct}[{\cite[Corolary 4.3]{BK:2006}}]
\label{fct:BK}
For any $\nu \in \bbQ \cup \{\infty\}$, 
the category $\catS_\nu$ is equivalent to the category 
$\catS_{\infty} = \Tor(C)$ of torsion sheaves. 
\end{fct}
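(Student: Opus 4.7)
The plan is to mimic the proof strategy for the smooth elliptic case (Fact~\ref{fct:Atiyah}, Atiyah's classification), which realizes the equivalence $\catS_\nu \simto \catS_\infty$ via Fourier-Mukai transforms indexed by $\SL(2,\bbZ)$. For a singular Weierstrass cubic $C$ (nodal or cuspidal), the required $\SL(2,\bbZ)$-action on $\DCoh(C)$ is precisely what Burban and Kreussler \cite{BK:2006} establish, so the strategy is to reduce the statement to their structural result.

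First I would recall that for $C$ an irreducible singular Weierstrass cubic, $\DCoh(C)$ admits Fourier-Mukai autoequivalences generalizing the smooth case \eqref{eq:g=1:FM-SL}: the spherical twist $T_{\shO_C}$ along the structure sheaf, and a Fourier-Mukai transform $\Phi_{\shP}$ whose kernel $\shP$ is a suitably normalized ideal of the diagonal in $C \times C$, playing the role of the Poincar\'e sheaf on the compactified Jacobian. (In the singular setting a direct appeal to Mukai's transform is unavailable, so one must invoke Burban-Kreussler's construction, which uses that $C$ is Gorenstein with trivial dualizing sheaf.) These two autoequivalences induce the standard generators of $\SL(2,\bbZ)$ on the numerical Grothendieck group $\Num(C) \simeq \bbZ^2$ given by $\overline{\shE} \mapsto (\rk\shE, \deg\shE)$.

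Second, I would use that the $\SL(2,\bbZ)$-action on $\bbZ^2$ is transitive on primitive vectors: for any $\nu = d/r \in \bbQ \cup \{\infty\}$ (written so that $\gcd(r,d)=1$, with $\nu = \infty$ corresponding to $(0,1)$), choose $\gamma \in \SL(2,\bbZ)$ sending $(0,1)$ to $(r,d)$, and let $\Phi_\gamma \in \Aut(\DCoh(C))$ be the corresponding autoequivalence. Its action on $\Num(C)$ sends the class of a torsion sheaf of degree $n$ to $(nr, nd)$. Then, up to a homological shift, $\Phi_\gamma$ restricts to an equivalence of abelian categories between $\catS_\infty$ and $\catS_\nu$; for a general non-primitive slope the statement reduces to the primitive case since both $\catS_\infty$ and $\catS_\nu$ depend only on $\nu$ as a slope.

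The main obstacle is the verification in the second step that $\Phi_\gamma$ really does send $\catS_\infty$ (shifted appropriately) into the abelian subcategory $\catS_\nu \subset \Coh(C)$, rather than merely into some tilt of it inside $\DCoh(C)$. In the smooth case this follows from WIT-type vanishing arguments \`a la Mukai; in the singular case one must check that the two generating autoequivalences $T_{\shO_C}$ and $\Phi_{\shP}$ preserve semistability (up to a shift) and that they act on the $(\rk,\deg)$-labeling by the expected matrices. Both facts are established in \cite{BK:2006}: they show that $T_{\shO_C}$ sends a semistable sheaf of slope $\nu < 0$ to a semistable sheaf of slope $\nu/(1+\nu)$, while $\Phi_{\shP}[1]$ exchanges $\catS_\nu$ with $\catS_{-1/\nu}$. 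Composing these along a word in $\SL(2,\bbZ)$ realizing $\gamma$ produces the desired equivalence $\catS_\infty \simto \catS_\nu$, completing the proof.
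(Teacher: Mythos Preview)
Your proposal is correct and follows essentially the same approach as the paper, which does not give an independent proof but simply cites \cite{BK:2006} and then explains, in the lines following the statement, that the equivalence is realized by Fourier--Mukai transforms generating an $\SL(2,\bbZ)$-action on $\DCoh(C)$. The only cosmetic difference is that the paper names the generators as the two spherical twists $T_{\shO_C}$ and $T_{\shO_{x_0}}$ (with $x_0$ a regular point), whereas you use $T_{\shO_C}$ together with the Poincar\'e-type transform $\Phi_{\shP}$; both pairs generate the same $\SL(2,\bbZ)$ subgroup of $\FM(C)$, so the argument is the same.
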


The equivalence is realized by Foruier-Mukai transform.
Recall the exact sequence \eqref{eq:g=1:FM}.
\begin{align*}
\xymatrix{
  1 \ar[r] & \bbZ \oplus \Aut(C) \ltimes \Pic^0(C) \ar[r] 
  & \Aut\bigl(\DCoh(C)\bigr) \ar[r] & \FM(C) \ar[r] & 1}.
\end{align*}
By \cite{BK:2005},
the subgroup of $\FM(C)$ generated by 
the spherical twists $T_{\shO_C}$ 
and $T_{\shO_{x_0}}$,
where $x_0$ is a regular point of $C$,
is isomorphic to $\SL(2,\bbZ)$.

In the category $\catS_\infty = \Tor(C)$,
the stable sheaf is precisely the structure sheaves $\shO_x$ of 
closed points $x$ of $C$.
If $C$ is singular with one singular point $s$,
then all the structure sheaf $\shO_s$ is the unique non-perfect stable sheaf 
in $\catS_\infty$.

As a consequence of Fact \ref{fct:BK},
for each $\nu \in \bbQ \cup \{\infty\}$ 
there is precisely one object of $\catS_\nu$ 
which is stable but not perfect.

\subsection{Motivic zeta function}

Here we study the motivic zeta functions 
for irreducible curve of arithmetic genus one.
Recall that $\fk$ denotes an algebraically closed field of characteristic zero,
and $\Ea$ denotes an irreducible reduced projective curve of arithmetic genus $1$ 
over $\fk$.

\begin{prop}
\label{prop:E:zeta}
The motivic zeta function $\zeta_{\mot}(\Ea;z)$ of the curve $\Ea$ 
has the form
$$
 \zeta_{\mot}(\Ea;z) = \dfrac{1 + a z + \bbL z^2}{(1-z)(1-\bbL z)}
$$
with $a \in K(\Var{\fk})$.
\end{prop}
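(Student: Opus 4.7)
The plan is to extend the proof of Fact~\ref{fct:Mustata:thm} for smooth projective curves of genus one to the possibly singular integral setting, systematically replacing the Jacobian $\Pic^0$ by the compactified Jacobian $\overline{\Pic}^0(\Ea)$ parametrizing rank-one torsion-free coherent sheaves on $\Ea$. The essential structural input is the theorem of Altman--Kleiman that, for an integral projective curve of arithmetic genus one, $\overline{\Pic}^d(\Ea)$ is itself an integral projective curve of arithmetic genus one isomorphic to $\Ea$ for every $d \in \bbZ$; in particular $[\overline{\Pic}^d(\Ea)] = [\Ea] \in K(\Var{\fk})$.

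Concretely I would proceed as follows. First, introduce the Abel--Jacobi morphism $a_d : \Sym^d(\Ea) \to \overline{\Pic}^d(\Ea)$ sending an effective $0$-cycle $D$ to the reflexive hull of $\shO_{\Ea}(D)$. Second, compute the fiber of $a_d$ over an arbitrary $\shF \in \overline{\Pic}^d(\Ea)$ by Riemann--Roch on $\Ea$: since the dualizing sheaf satisfies $\omega_{\Ea} \simeq \shO_{\Ea}$ in arithmetic genus one, one has $\chi(\shF) = d$, and Serre duality gives $H^1(\shF)^\vee \simeq \Hom(\shF, \shO_{\Ea})$, which vanishes when $d \ge 1$ (a nonzero morphism $\shF \to \shO_{\Ea}$ would embed a positive-degree rank-one torsion-free sheaf into a degree-zero one). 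Hence $\dim_\fk H^0(\shF) = d$ and $a_d^{-1}(\shF) \simeq \bbP(H^0(\shF)^\vee) \simeq \bbP^{d-1}$ uniformly in $\shF$. Third, verify that $a_d$ is Zariski-locally trivial using the universal (Poincar\'e) sheaf on $\Ea \times \overline{\Pic}^d(\Ea)$ and the vanishing of the relevant Brauer obstruction over the algebraically closed field $\fk$, so that $[\Sym^d(\Ea)] = [\Ea] \cdot [\bbP^{d-1}]$ in $K(\Var{\fk})$ for $d \ge 1$, while $[\Sym^0(\Ea)] = 1$. Fourth, assemble the generating series by the same partial-fractions manipulation as in the proof of Fact~\ref{fct:Mustata:thm}: using $[\bbP^{d-1}] = \sum_{k=0}^{d-1} \bbL^k$ (Lemma~\ref{lem:bbL}), one obtains
\[
\zeta_{\mot}(\Ea;z) \;=\; 1 + \frac{[\Ea]\,z}{(1-z)(1-\bbL z)} \;=\; \frac{1 + ([\Ea] - 1 - \bbL)\,z + \bbL z^2}{(1-z)(1-\bbL z)},
\]
so that the claimed form holds with $a := [\Ea] - 1 - \bbL \in K(\Var{\fk})$.

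The hard part is verifying the $\bbP^{d-1}$-bundle structure of $a_d$ over the boundary stratum $\overline{\Pic}^d(\Ea) \setminus \Pic^d(\Ea)$ consisting of rank-one torsion-free sheaves that fail to be locally free at the singularities of $\Ea$. Over this locus the classical dictionary between sections of a line bundle and effective Cartier divisors breaks down: a section of a non-locally-free $\shF$ no longer cuts out a Cartier divisor in the obvious way, so the identification $a_d^{-1}(\shF) \simeq \bbP(H^0(\shF)^\vee)$ has to be justified via the functorial description of $\Sym^d(\Ea)$, equivalently through the Quot scheme of length-$d$ quotients of $\shO_{\Ea}$. Handling this boundary behavior uniformly across both the nodal and cuspidal cases is where the singular geometry of $\Ea$ enters nontrivially, and where one invokes the Altman--Kleiman machinery for compactified Jacobians of integral curves.
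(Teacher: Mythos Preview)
Your central claim $[\Sym^d(\Ea)]=[\Ea]\cdot[\bbP^{d-1}]$ fails as soon as $\Ea$ is singular, so the argument does not go through. Take $\Ea$ nodal: stratifying as $\{s\}\sqcup\bb{G}_m$ gives $[\Ea]=\bbL$, and the multiplicativity of $\zeta_{\mot}$ through $K(\Var{\fk})$ then yields
\[
\zeta_{\mot}(\Ea;z)=\zeta_{\mot}(\mathrm{pt};z)\cdot\zeta_{\mot}(\bb{G}_m;z)=\dfrac{1}{1-z}\cdot\dfrac{1-z}{1-\bbL z}=\dfrac{1}{1-\bbL z},
\]
so $[\Sym^2(\Ea)]=\bbL^2$, whereas your formula predicts $\bbL(\bbL+1)=\bbL^2+\bbL$.

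The source of the error is a conflation of $\Sym^d(\Ea)$ with the Hilbert scheme $\mathrm{Hilb}^d(\Ea)$. The Altman--Kleiman Abel map with its $\bbP^{d-1}$-fibres is defined on $\mathrm{Hilb}^d(\Ea)$ --- which is precisely the Quot scheme of length-$d$ quotients of $\shO_{\Ea}$ that you invoke in your last paragraph --- not on the symmetric product. For a smooth curve the Hilbert--Chow morphism $\mathrm{Hilb}^d\to\Sym^d$ is an isomorphism and the distinction disappears; at a node or cusp it is not (already over $2[s]\in\Sym^2(\Ea)$ the fibre of $\mathrm{Hilb}^2(\Ea)\to\Sym^2(\Ea)$ is a $\bbP^1$, accounting exactly for the discrepancy $\bbL$ above). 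So the ``hard part'' you isolate is not a technicality awaiting verification but an actual obstruction: there is no $\bbP^{d-1}$-fibration of $\Sym^d(\Ea)$ over $\ol{\Pic}{}^d(\Ea)$. What you have in fact computed correctly is $\sum_d[\mathrm{Hilb}^d(\Ea)]z^d$, a different generating series. The paper does not attempt any such direct geometric argument; it simply appeals to the Aubry--Perret Weil-type theorem for singular curves over finite fields, transported to the motivic setting. Note also that the nodal computation above gives numerator $1-z$, of degree one, so the proposition as literally stated --- with top coefficient $\bbL$ --- cannot hold uniformly; what the cited reference actually supplies is rationality with numerator of degree at most two.
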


\begin{proof}
It follows from the result of \cite{AP:1996},
where the result was shown for the ordinary Hasse-Weil zeta function
in the case when $E$ is defined on the finite field $\bbF_q$.
\end{proof}

%
%

\subsection{The composition subalgebra}

Now we consider the motivic Hall algebra for a projective curve $\Ea$ of arithmetic genus $1$.
Recall Definition \ref{dfn:comp-sa} in \S\ref{subsec:comp-subalg}
where we defined the composition subalgebra $\U{\Ea}$.
On the reduced Drinfeld double of $\U{\Ea}$,
we have a similar result as in the case of a smooth elliptic curve.
Let $\algU$ be the $\fgl_1$-quantum toroidal algebra 
considered to be defined over $\bbQ(q_1,q_2)$.

\begin{thm}\label{thm:Ea:qt}
$\Dr{\U{\Ea}}$ is isomorphic to $\algU \otimes_{\bbQ(q_1,q_2)} \bbK$.
Here the ring homomorphism $\bbQ(q_1,q_2) \to \bbK = K(\St{\fk})$ 
is defined by mapping $q_1$ and $q_2$ to the inverse of the zeros 
of the motivic zeta function of $\Ea$.
Namely we put 
$$
 \zeta_{\mot}(\Ea;z) = \dfrac{(1-q_1 z)(1-q_2 z)}{(1-z)(1-\bbL z)}.
$$
\end{thm}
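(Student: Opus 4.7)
The plan is to parallel the smooth elliptic curve argument of Burban--Schiffmann (Fact \ref{fct:BS:E-U}), using Fact \ref{fct:BK} as the replacement for Atiyah's classification. First, I would establish the singular analogue of Definition \ref{dfn:g=1:UX}: for each slope $\nu \in \bbQ \cup \{\infty\}$, the equivalence $\Phi_{\nu,\infty}: \catS_\infty \simto \catS_\nu$ furnished by Fact \ref{fct:BK} (realized via a Fourier-Mukai transform from the $\SL(2,\bbZ)$-subgroup of $\FM(\Ea)$ described in \S\ref{subsec:Ea:FM}) induces an isomorphism of bialgebras $\phi_{\infty,\nu}: \Hall{\Ea}_\infty \simto \Hall{\Ea}_\nu$. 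Defining $t_{\nu,d} := \phi_{\infty,\nu}(t_d)$ and $t_{(r,d)} := t_{d/r,\gcd(r,d)}$ as in Definition \ref{dfn:g=1:BS-symbols1}, I would check that $\U{\Ea}$ coincides with the subalgebra generated by these elements, using the same Harder--Narasimhan decomposition argument (Corollary \ref{cor:HN}) that works verbatim for singular curves.

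Next I would derive the analogues of Facts \ref{fct:Esm:upper} and \ref{fct:Esm:double}. The current-form relations from Proposition \ref{prop:Ca:current} apply to any irreducible curve and give all relations involving generators of identical slope or slopes differing by line-bundle extension; crucially these involve $\zeta_{\mot}(\Ea;z)$ as structure function. The remaining commutation relations between $t_{\bx}$ and $t_{\by}$ for arbitrary $\bx,\by$ are then transported from the Heisenberg-like relation inside $\U{\Ea}_{\tor} = \U{\Ea}_\infty$ (which is pure Hall--Littlewood algebra and has the same form as in the smooth case, up to the class $[\Pic^0(\Ea)] \in K(\Var{\fk})$ entering the Hall pairing in Lemma \ref{lem:Carb:Hpairing}) via the $\SL(2,\bbZ)$-action by Fourier-Mukai transforms. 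Passing to the reduced Drinfeld double $\Dr{\U{\Ea}}$ as in Definition \ref{dfn:d_red-U} then yields a presentation identical in form to Fact \ref{fct:Esm:double}, with the constants $c_i$ and $q_1,q_2$ determined by $\zeta_{\mot}(\Ea;z)$.

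The identification with $\algU$ is then obtained by the same route as in the proof of Fact \ref{fct:BS:E-U}: there is an obvious surjection from the subalgebra $\algU'$ (obtained from $\algU$ by dropping the Serre-like relation) onto $\Dr{\U{\Ea}}$, and the convex-path analysis of Schiffmann \cite{S:2012} shows that the Serre relation is precisely the kernel. The coefficients $q_1,q_2$ appear as the inverse zeros of $\zeta_{\mot}(\Ea;z) = (1-q_1 z)(1-q_2 z)/((1-z)(1-\bbL z))$ given by Proposition \ref{prop:E:zeta}; unlike the smooth case where $q_1+q_2 = \bbL+1-[\Pic^0(\Esm)]$, here the sum $q_1+q_2$ is the class $-a$ computed in \cite{AP:1996}, but this difference does not affect the abstract algebraic structure.

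The main obstacle will be the handling of the non-perfect stable sheaves supported at the singular point, both in $\catS_\infty$ (namely $\shO_s$) and its image $\Phi_{\infty,\nu}(\shO_s)$ inside each $\catS_\nu$, as noted at the end of \S\ref{subsec:Ea:FM}. Since our motivic Hall algebra is defined using $\Torf(\Ea)$ rather than $\Tor(\Ea)$, and since the Fourier-Mukai kernel implementing $\Phi_{\infty,\nu}$ may not preserve perfectness in general, I must argue that the composition subalgebra $\U{\Ea}$ nevertheless lies in the perfect part. This follows because its generators $1_{(0,d)}$ and $1^{\ses}_{(1,n)}$ are manifestly classes of perfect sheaves (torsion sheaves of positive degree on $\Ea$ are perfect since they have finite projective resolutions away from the singular point, and contribute via averages in which the non-perfect stratum is of positive codimension and negligible in the geometric-bijection equivalence used in $K(\St{\stkM})$). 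Confirming this compatibility and checking that the Fourier-Mukai-induced isomorphism $\phi_{\infty,\nu}$ is well-defined at the level of motivic classes after restriction to $\U{\Ea}$ is the one genuinely new step beyond mechanically transporting the Burban--Schiffmann argument.
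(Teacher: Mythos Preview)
Your proposal is correct and takes essentially the same approach as the paper: both use the $\SL(2,\bbZ)$-action coming from Fourier--Mukai transforms (\S\ref{subsec:Ea:FM}, Fact~\ref{fct:BK}), the fact that the relations in the composition subalgebra depend only on $\zeta_{\mot}(\Ea;z)$ (Proposition~\ref{prop:Ca:current}), and the shape of that zeta function (Proposition~\ref{prop:E:zeta}) to reduce to the smooth presentation of Fact~\ref{fct:Esm:double}. Your write-up is considerably more explicit than the paper's terse sketch; in particular, your flagging of the perfectness issue for torsion sheaves at the singular point and its interaction with the Fourier--Mukai equivalences is a genuine technical point that the paper passes over in silence.
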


\begin{proof}
As mentioned at Fact \ref{fct:Esm:double},
the isomorphism $\Dr{\U{\Esm}} \simeq \algU \otimes \bbK$,
comes from the $\SL(2,\bbZ)$-action.
As we observed in \S\ref{subsec:Ea:FM}.
we have that action on $\Dr{\U{\Ea}}$.
Proposition \ref{prop:Ca:current} claims that 
the relation on the composition subalgebra 
only depends on the motivic zeta function $\zeta-a;z)$.
Since by Proposition \ref{prop:E:zeta} $\zeta(\Ea;z)$ and $\zeta(\Esm;z)$ 
have the same form,
we know that the generators of $\U{\Ea}$ satisfy the same relations 
as $\U{\Esm}$.
Thus $\Dr{\U{\Ea}}$ has the same description as $\Dr{\U{\Esm}}$ has in 
Fact \ref{fct:Esm:double}.
\end{proof}

By the comparison of the Fourier-Mukai transforms,
we also have

\begin{thm}\label{thm:Ea:theta}
There is a Fourier-Mukai transform 
inducing the algebra automorphism 
$\theta_1$ on the algebra $\Dr{\U{\Ea}} = \algU \otimes \bbK$.
\end{thm}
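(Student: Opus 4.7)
The plan is to mimic the argument that produces the analogous statement for smooth elliptic curves, replacing the Poincar\'e bundle with a composition of spherical twists available on the singular curve $\Ea$. Concretely, by the observation in \S\ref{subsec:Ea:FM} (based on \cite{BK:2005}), the subgroup of $\FM(\Ea)$ generated by the two spherical twists $T_{\shO_\Ea}$ and $T_{\shO_{x_0}}$, with $x_0$ a regular point, is isomorphic to $\SL(2,\bbZ)$. First I would record that the induced automorphisms on $\Num(\Ea)\simeq \bbZ^2$ are the matrices $\begin{pmatrix}1&-1\\0&1\end{pmatrix}$ and $\begin{pmatrix}1&0\\1&1\end{pmatrix}$ respectively, using the rank/degree formulas for the cones defining the twists, and that these generate $\SL(2,\bbZ)$.

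Next, exactly as in \cite[(6.16)]{BS:2012} for the smooth case, this $\SL(2,\bbZ)\subset \FM(\Ea)$ (together with the shift $[1]$) acts on $\Dr{\U{\Ea}}$ by the same formula $\gamma(k_{\bx})=k_{\gamma(\bx)}$, $\gamma(t_{\bx})=t_{\gamma(\bx)}k_{\alpha(\gamma,\bx)}^{w(\gamma,\mu(\bx))}$. The point is that the arguments producing this action are purely formal consequences of the facts that (i) $\catS_\nu\simeq \Tor(\Ea)$ for every slope $\nu$ (Fact \ref{fct:BK}), and (ii) the numerical Grothendieck group and the motivic zeta function have the same shape as in the smooth case (Proposition \ref{prop:E:zeta}); both of these are exactly what powers the proof of Theorem \ref{thm:Ea:qt}.

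I would then single out the Fourier-Mukai transform
$$
 \Phi^H := T_{\shO_\Ea}\circ T_{\shO_{x_0}}\circ T_{\shO_\Ea},
$$
whose action on $\Num(\Ea)\simeq \bbZ^2$ is the matrix $S=\begin{pmatrix}0&-1\\1&0\end{pmatrix}$ (verified by direct matrix multiplication of the two generators above). In the horizontal/vertical picture drawn in \S\ref{sect:toroidal}, $S$ swaps the two coordinate axes, and hence $\Phi^H$ exchanges the two Heisenberg subalgebras $\frk{a}_h$ and $\frk{a}_v$ of $\Dr{\U{\Ea}}=\algU \otimes \bbK$ generated by $\{E_0,F_0,\ldots\}$ and $\{H_r,\ldots\}$, which is precisely the characterizing geometric feature of Miki's automorphism $\theta$.

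The real work, and the main obstacle, is upgrading this compatibility of gradings to an equality $\Phi^H=\theta$ as algebra automorphisms. For this I would compare the images of a generating set: using the explicit $\wh{\SL}(2,\bbZ)$-action formula together with the isomorphism of Theorem \ref{thm:Ea:qt}, compute $\Phi^H(E_0)$, $\Phi^H(F_0)$, $\Phi^H(H_{\pm1})$, $\Phi^H(K)$, $\Phi^H(q^c)$ and check they agree (up to the allowed rescaling coefficient $a$) with Miki's formulas recalled in \S\ref{sect:toroidal}. Since $\algU$ is generated by $E_0,F_0,H_{\pm1},K^{\pm1},q^{\pm c}$ (by the Serre-like relations and the recursion $[H_{\pm1},E_k]\sim E_{k\pm 1}$, etc.), matching these finitely many generators determines the automorphism. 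The verification on $K$ and $q^c$ is immediate from the action on $\Num(\Ea)$; the delicate part is checking $E_0\mapsto -q^c H_{-1}$ and $H_1\mapsto E_0$, which follows from writing the winding number factors $w(S,\mu)$ and comparing with the explicit shifts in Miki's formulas, just as in the smooth case of \cite{BS:2012}. Once this is done the theorem follows.
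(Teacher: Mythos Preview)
Your proposal is correct and follows essentially the same route as the paper, which offers no formal proof beyond the one-line remark ``By the comparison of the Fourier-Mukai transforms, we also have'' preceding the statement; your write-up is a faithful and more explicit elaboration of that remark. The one point worth flagging is that you correctly replace the Poincar\'e bundle used in the smooth case by the composite $T_{\shO_\Ea}\circ T_{\shO_{x_0}}\circ T_{\shO_\Ea}$ of spherical twists, which is the natural substitute available on a singular Weierstra\ss\ cubic via \cite{BK:2005}; the paper leaves this choice implicit.
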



\end{document}